\newcommand{\Z}{\mathbb{Z}}
\newcommand{\R}{\mathbb{R}}
\newcommand{\C}{\mathbb{C}}
\renewcommand{\P}{\mathbb{P}}
\newcommand{\gltwoplus}{\widetilde{\operatorname{GL}_{2}^{+}(\R)}}
\newcommand{\surj}{\twoheadrightarrow}
\newcommand{\eps}{\epsilon}
\newcommand{\om}{\omega}
\newcommand{\si}{\sigma}
\newcommand{\de}{\delta}
\newcommand{\la}{\lambda}
\newcommand{\sub}{\subset}
\renewcommand{\AA}{{\mathcal A}}
\renewcommand{\SS}{{\mathcal S}}
\newcommand{\OO}{{\mathcal O}}
\newcommand{\DD}{{\mathcal D}}
\newcommand{\FF}{{\mathcal F}}
\newcommand{\TT}{{\mathcal T}}
\newcommand{\NN}{{\mathcal N}}
\newcommand{\CC}{{\mathcal C}}
\newcommand{\Stab}{\operatorname{Stab}}
\newcommand{\Pic}{\operatorname{Pic}}
\newcommand{\glue}{\operatorname{gl}}
\newcommand{\rk}{\operatorname{rk}}
\newcommand{\lan}{\langle}
\newcommand{\ran}{\rangle}
\newcommand{\ov}{\overline}
\newcommand{\Hom}{\operatorname{Hom}}
\newcommand{\Ext}{\operatorname{Ext}}
\newcommand{\Coh}{\operatorname{Coh}}
\newcommand{\nn}{{\bf n}}
\newcommand{\coker}{\operatorname{coker}}
\renewcommand{\ker}{\operatorname{ker}}
\newcommand{\im}{\operatorname{im}}
\newcommand{\ot}{\otimes}
\newcommand{\wt}{\widetilde}
\newcommand{\hh}{{\mathfrak h'}}
\numberwithin{equation}{section}
\newtheorem{thm}{Theorem}[section]
\newtheorem{lem}[thm]{Lemma}
\newtheorem{prop}[thm]{Proposition}
\newtheorem{cor}[thm]{Corollary}
\newenvironment{definition}[1][Definition.]{\begin{trivlist}
\item[\hskip \labelsep {\bfseries #1}]}{\end{trivlist}}
\newenvironment{remark}[1][Remark.]{\begin{trivlist}
\item[\hskip \labelsep {\bfseries #1}]}{\end{trivlist}}
\begin{document}

\title{Gluing stability conditions}
\author{John Collins and Alexander Polishchuk}
\thanks{This work of the second author was partially supported by the NSF grant DMS-0601034}

\begin{abstract} We define and study a gluing procedure for Bridgeland stability
conditions in the situation when a triangulated category has a semiorthogonal decomposition.
As an application, we construct stability conditions on the derived categories of
$\Z_2$-equivariant sheaves associated with ramified double coverings of $\P^3$. Also,
we study the stability space for the derived category of $\Z_2$-equivariant 
coherent sheaves on a smooth curve $X$, associated with a degree $2$ map $X\to Y$,
where $Y$ is another smooth curve. In the case when the genus of $Y$ is $\ge 1$ 
we give a complete description of the stability space.
\end{abstract}

\maketitle



\section*{Introduction}

Stability conditions on triangulated categories were introduced by Bridgeland in \cite{Bridgeland06}
as a mathematical formalization of Douglas' work on $\Pi$-stability in \cite{Doug1,Doug2}.
A stability condition gives a way to single out (semi)stable objects in a triangulated category $\DD$, 
generalizing Mumford's definition of stability for vector bundles. The remarkable feature of
Bridgeland's theory is that the set of (nice) stability conditions on $\DD$ has a structure of complex manifold. Hypothetically this manifold, called the {\it stability space} has some interesting geometric
structures, and in the case when $\DD$ is the derived category of coherent sheaves on a 
Calabi-Yau threefold this space should be relevant for mirror symmetry considerations 
(see \cite{Bridge-survey}). However,
at present we have a quite limited stock of examples of stability conditions, so it is important
to come up with new techniques for constructing them. Recall that a stability condition can
be described via its {\it heart}, which is an abelian category $H\sub\DD$, together with
a central charge $Z$, which is a homomorphism $K_0(\DD)\to\C$ sending every nonzero object of 
$H$ either to the (open) upper half-plane or to $\R_{<0}$. The idea to consider non-obvious abelian 
categories sitting inside derived categories is historically related to the theory of perverse sheaves,
where such abelian categories are defined using a certain gluing procedure associated
with a stratification of a topological space (see \cite{BBD}).
Thus, it seems natural to try to extend the gluing construction to stability conditions.
This is the first principal goal of the present paper. Secondly, we consider examples of the gluing
construction for stability conditions in particular geometric situations.

The notion of an abelian category sitting nicely inside a triangulated category $\DD$ is axiomatized
in \cite{BBD}. Recall that such categories appear as {\it hearts} of $t$-structures on $\DD$.
The natural setup for gluing of $t$-structures is the situation when $\DD$ has a {\it semiorthogonal
decomposition} $\DD=\lan \DD_1, \DD_2\ran$. By definition, this means
that $\DD_1$ and $\DD_2$ are triangulated subcategories in $\DD$ such that 
$\Hom(E_2,E_1)=0$ for every $E_1\in\DD_1$ and $E_2\in\DD_2$, and for every object
$E \in \DD$ there exists an exact triangle
\begin{equation}\label{semiorth-tr-eq}
E_2 \rightarrow E \rightarrow E_1\to E_2[1]
\end{equation}
with $E_1 \in \DD_{1}$, $E_2 \in \DD_{2}$. 
Assume we are given hearts of $t$-structures $H_1\sub\DD_1$ and $H_2\sub\DD_2$.
Under the additional assumption that 
\begin{equation}\label{Hom-H1-H2}
\Hom^{\leq 0}(H_1,H_2)=0
\end{equation}
the corresponding
glued heart $H$ will be the smallest full subcategory of $\DD$, closed under extensions
and containing $H_1$ and $H_2$. If we have stability conditions on $\DD_1$ and $\DD_2$
with the above hearts then we can define a central charge $Z$ on $\DD$ uniquely, so that it
restricts to the given central charges on $\DD_1$ and $\DD_2$. In order for the pair
$(H,Z)$ to determine a stability condition on $\DD$ one should check the Harder-Narasimhan
property (see section \ref{reason-sec}). 
This does not seem to follow automatically from the similar property of the original
stability conditions on $\DD_1$ and $\DD_2$. We provide two sufficient criteria for checking this 
property: the first (Proposition \ref{GluingCondition}(a)) 
imposes an additional discreteness condition on
the original stability conditions on $\DD_1$ and $\DD_2$, while the second
(Theorem \ref{reason-glue-thm}) imposes a stronger orthogonality condition
than \eqref{Hom-H1-H2}. We also check that under appropriate assumptions the gluing operation
is continuous (see Theorem \ref{a-thm} and Corollaries \ref{glue-cor}, \ref{exc-cor}).

For technical reasons we introduce the notion of a {\it reasonable} stability condition which is
slightly stronger than that of a {\it locally finite} stability condition considered by Bridgeland. 
Namely, we say that a stability condition is {\it reasonable} if the infimum of $|Z(E)|$ over
all nonzero semistable objects $E$, is positive. 
In most of our considerations we work only with reasonable stabilities.
We show in section \ref{reason-sec} that all (locally finite) stability conditions considered in the works
\cite{AB}, \cite{Bridgeland06}, \cite{Bridgeland-K3} and \cite{Macri07}
are reasonable, so this does not seem to be much of a restriction.

In the case of the semiorthogonal decomposition
associated with a full exceptional collection $(E_i)$
our gluing procedure for stabilities reduces to the construction of 
Macr\`i in \cite{Macri07} (the collection $(E_i)$ should be $\Ext$-{\it exceptional}, i.e.,
such that $\Hom^{\leq 0}(E_i,E_j)=0$ for $i\neq j$).
To get new examples of stability conditions we consider the following situation.
Let $X\to Y$ be a ramified double covering of smooth projective varieties. Then $X$ is equipped
with an involution and we can consider the derived category
$\DD=\DD_{\Z_2}(X)$ of $\Z_2$-equivariant coherent sheaves on $X$. It turns out
that this category has a semiorthogonal decomposition with one block being the category
of sheaves on $Y$ and another---sheaves on $R$, the ramification divisor in $Y$ 
(in the case of curves these semiorthogonal decompositions were considered in \cite{P-orbifold}). This allows to glue together some stability conditions for sheaves on $Y$ and $R$ into a stability condition on $\DD$. Using examples of stability conditions on surfaces constructed in \cite{AB} this gives examples of stability conditions on $\DD_{\Z_2}(X)$, where $X$ is a
ramified double cover of $\P^3$.

Finally, we study in detail the case when $X$ and $Y$ are curves. It turns out that in this case 
a lot of stability conditions on $\DD_{\Z_2}(X)$ are obtained by gluing.
In Theorem \ref{cover-thm} we describe an open simply connected subset $U$ of the stability space consisting of the stability conditions that are ``not too far" from the standard one (similar to the Mumford's stability for nonequivariant sheaves). We show that $U$ is
the universal covering of the corresponding open subset of central charges, where the group
of deck transformations is $\Z$. 
In the case when genus of $Y$ is $\geq 1$ we describe the stability space of $\DD_{\Z_2}(X)$ completely and show that it is contractible (see section \ref{class-sec}). Namely, we construct an isomorphism of the stability space with an explicit open subset of $\Sigma^n\times\C^2$, where $n$
is the number of ramification points of $X\to Y$, and
$\Sigma$ is a certain simply connected Riemann surface of parabolic type (so $\Sigma$ is isomorphic
to $\C$). This surface $\Sigma$ naturally appears as follows: we prove
that if $p\in X$ is a ramification point then a stability condition on $\DD_{\Z_2}(X)$ restricts
to a stability condition on the subcategory $\DD_p$ of objects supported at $p$
(provided $g(Y)\ge 1$). The stability
space corresponding to $\DD_p$ has form $\Sigma\times\C$, where
the central charge of $\OO_{2p}$ is given by exponentiating the projection to the second factor $\C$.
In the case when $Y=\P^1$ the stability space seems
to be more complicated due to the presence of additional exceptional objects in $\DD_{\Z_2}(X)$. 
We show in this case that our open subset $U$ contains a dense open subset consisting of stabilities
constructed from exceptional collections (see Proposition \ref{exc-P1-prop}).

\noindent
{\it Notation.} For subcategories $\AA_1,\ldots,\AA_n$ in a triangulated category $\DD$
we denote by $[\AA_1,\ldots, \AA_n]$ (resp., $\lan\AA_1,\ldots,\AA_n\ran$)
the extension-closed full subcategory (resp., triangulated subcategory) in $\DD$
generated by the $\AA_i$'s. We work with algebraic varieties over a fixed algebraically closed field
$k$. For a smooth projective variety $X$ we denote by $\DD(X)$ the bounded derived category of
coherent sheaves on $X$. For a complex number $z$ we denote by $\Re z$ and $\Im z$ its
real and imaginary part, and we call $\phi(z):=(\arg z)/\pi$ the phase of $z$.

\section{Reasonable stability conditions}
\label{reason-sec}

Throughout this section $\DD$ denotes a triangulated category.
Let us briefly recall basic definitions and results concerning local finite stability conditions on $\DD$, referring to Bridgeland's original paper \cite{Bridgeland06}
for details.

By definition, a stability condition $\sigma$ is given by a pair $(Z,P)$, where $Z:K_0(\DD)\to\C$
is a homomorphism from the Grothendieck group $K_0(\DD)$ of $\DD$, and $P$ is a slicing.
Such a slicing is given by a collection of subcategories $P(\phi)$ of semistable objects of phase $\phi$
for each $\phi\in\R$, where $\Hom(P(\phi_1),P(\phi_2))=0$ for $\phi_1>\phi_2$, and 
$P(\phi)[1]=P(\phi+1)$.
For an object $E\in P(\phi)$ we will use the notation $\phi(E)=\phi$.
Similarly to the case of vector bundles,
for each object $E$ of $\DD$ there should exist a {\it Harder-Narasimhan filtration} (HN-filtration),
i.e., a collection of exact triangles building $E$ from the semistable factors $E_1,\ldots,E_n$
(called the {\it HN-factors} of $E$),
where $\phi(E_1)>\ldots>\phi(E_n)$ ($E_1\to E$ is an analog of the subbundle of maximal phase, etc.).
For each interval $I\sub\R$ we denote by $PI\sub\DD$ the extension-closed subcategory generated by 
all the subcategories $P(\phi)$ for $\phi\in I$. For example, $P(0,1]$ denotes the subcategory corresponding to the interval $(0,1]$.

If $\sigma=(Z,P)$ is a stabiity condition then $P(0,1]$ is a heart of a bounded nondegenerate $t$-structure on $\DD$ with $\DD^{\le 0}=P(0,+\infty)$ and $\DD^{\ge 0}=P(-\infty,1]$.
We will often
refer to the abelian subcategory $P(0,1]\sub\DD$ 
as the {\it heart} of $\sigma$. By Proposition 5.3 of \cite{Bridgeland06}, to give a stability condition is
the same as to give an abelian subcategory $H\sub \DD$ 
(which should be the heart of a bounded nondegenerate $t$-structure), 
together with a homomorphism $Z:K_0(H)\to\C$ such that for every nonzero object $E\in H$ one
has either $\Im Z(E)>0$ or $Z(E)\in\R_{<0}$. These data should satisfy the Harder-Narasimhan
property, i.e., once we define (semi)stability for objects in $H$ using the slopes associated with
the function $Z$, then every object of $H$ should be equipped with an analog of the
Harder-Narasimhan filtration. Checking the Harder-Narasimhan property is often an important ingredient in constructing stability conditions 
(see section \ref{HN-sec} for examples).

A stability condition $\sigma=(Z,P)$ is called {\it locally finite} if there exists $\eta>0$ such that
for every $\phi\in\R$ the quasi-abelian category $P(\phi-\eta,\phi+\eta)$ is of finite length.
The space of all locally finite stability conditions on $\DD$ is denoted $\Stab(\DD)$.
It can be equipped with a natural topology defined as follows (see section 6 of \cite{Bridgeland06}).
For $\sigma = (Z,P) \in \Stab(\DD)$ we define a function 
$||\cdot||_{\sigma}:\Hom(K_0(\DD),\C)\to[0,+\infty]$ by
$$||U||_{\sigma}=\sup_{E \text{ semistable},E\neq 0}\frac{|U(E)|}{|Z(E)|}.$$
The basis of open neighborhoods of a locally finite stability condition
$\sigma=(Z,P)$ in $\Stab(\DD)$ consists of open subsets
$$B_{\eps}(\sigma)=\{\tau=(U,Q)\ :\  ||U-Z||_{\sigma}<\sin(\pi\eps), d(P,Q)<\eps\},$$
where $d(P,Q)$ is a natural generalized metric on the set of slicings given by
$$d(P,Q)=\inf\{\eps\in\R_{\ge 0}\ :\ Q(\phi)\sub P[\phi-\eps,\phi+\eps] \text{ for all }\phi\in\R\}.$$
Theorem 7.1 of \cite{Bridgeland06} states that for a given locally finite stability condition
$\sigma=(Z,P)$ there exists an $\eps_0>0$ such that if $0<\eps<\eps_0$ then
every central charge $Z'\in\Hom(K_0(\DD),\C)$ with $||Z'-Z||_{\sigma}<\sin(\pi\eps)$ lifts to an element
of $B_{\eps}(\sigma)$. Let us set
$$W_{\sigma} := \{U \in \Hom(K_0(\DD),\C) : ||U||_{\sigma} < \infty \}.$$ 
The linear subspaces 
$W_{\sigma}\sub\Hom(K_0(\DD),\C)$ do not change as $\sigma$ varies over a connected component
$C$ of $\Stab(\DD)$. Furthermore, the natural projection $C\to W_{\sigma}$ is a local homeomorphism
(see Theorem 1.2 of \cite{Bridgeland06})

In the case when $\DD$ is of finite type over a field one can consider the numerical Grothendieck group
$\NN(\DD)$ which is the quotient of $K_0(\DD)$ by the kernel of the Euler bilinear form on
$K_0(\DD)$ (see \cite{Bridgeland06}, 1.3). A stability condition is called {\it numerical} if the corresponding
central charge factors through $\NN(\DD)$. We denote by $\Stab_{\NN}(\DD)$ the space
of numerical locally finite stability conditions on $\DD$. The above theorem on the structure of
$\Stab(\DD)$ implies that in a neighborhood of $\sigma\in \Stab_{\NN}(\DD)$
the space $\Stab_{\NN}(\DD)$ is modeled on the linear space
$W^{\NN}_{\sigma}=W_{\sigma}\cap\Hom(\NN(\DD),\C)$.
A numerical stability condition
$\sigma$ is called {\it full} if $W^{\NN}_{\sigma}= \Hom(\NN(\DD),\C)$ 
(see \cite{Bridgeland-K3}).

The space $\Stab(\DD)$ (resp., $\Stab_{\NN}(\DD)$) is equipped with a canonical action
of the group $\gltwoplus$, which is a universal covering of the group of $2\times 2$-matrices
over $\R$ with positive determinant. 
For a real number $a$ let us denote by $R_a:\Stab(\DD)\to\Stab(\DD)$ the operation of
shifting the phase by $a$ which is part of this $\gltwoplus$-action. More explicitly, 
for $\sigma=(Z,P)$ one has
$R_a\sigma=(r_{-\pi a}\circ Z,P')$, where $P'(t)=P(t+a)$, $r_{-\pi a}$ is the rotation in $\C=\R^2$ through
the angle $-\pi a$. We refer to the transformations $R_a$ as rotations.

\begin{definition} A stability condition $\sigma=(Z,P)$ on $\DD$ is
called {\it reasonable} if 
$$\inf_{E\text{ semistable},E\neq 0} |Z(E)| >0$$
where $E$ runs over all nonzero $\sigma$-semistable objects.
\end{definition}

\begin{lem}\label{reason-lem} 
Let $\sigma=(Z,P)$ be a stability condition on $\DD$.
\begin{enumerate}
\item If $\sigma$ is reasonable then for every $0<\eta<1$ one has
$$\inf_{t\in\R, E\in P(t,t+\eta)\setminus 0} |Z(E)| >0;$$
\item $\sigma$ is reasonable if and only if for every $t$ and every
$0<\eta<1$ the point $0$ is an isolated
point of $Z(P(t,t+\eta))$;
\item If $\sigma$ is reasonable then every category $P(t,t+\eta)$ for $0<\eta<1$ is of finite length,
hence, $\sigma$ is locally finite;
\item If the image of $Z$ in $\C$ is discrete then $\sigma$ is reasonable.
\end{enumerate}
\end{lem}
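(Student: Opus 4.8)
The plan is to prove the four statements in the order (1), then (2), then (3), then (4), since (1) feeds into the proof of (2), and (2) into (3), while (4) is essentially independent.

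For (1): the key observation is that any nonzero object $E \in P(t,t+\eta)$ has a Harder--Narasimhan filtration whose factors $E_1,\dots,E_n$ all lie in $P(t,t+\eta)$, with phases $t < \phi(E_n) < \dots < \phi(E_1) < t+\eta$. Writing $Z(E) = \sum_i Z(E_i)$, all the $Z(E_i)$ lie in the closed sector of the complex plane spanned by the rays of argument $\pi(\phi-\eps)$-type bounds, i.e.\ in a convex sector of angular width $<\pi$ (since $\eta<1$). For vectors lying in a fixed convex cone of angular width strictly less than $\pi$, the norm of the sum is bounded below by a positive constant (depending only on the angular width) times the sum of the norms: $|Z(E)| \ge c_\eta \sum_i |Z(E_i)|$. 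Since each $E_i$ is semistable and nonzero, $|Z(E_i)| \ge m := \inf_{E \text{ ss}, E\neq 0}|Z(E)| > 0$ by the reasonableness hypothesis. Hence $|Z(E)| \ge c_\eta \cdot m > 0$, and this bound is uniform in $t$ because $c_\eta$ depends only on $\eta$ (rotating $t$ rotates the sector but does not change its width). I should be mildly careful about the rotation invariance: replacing $t$ by $t+a$ simply rotates everything, so the constant $c_\eta$ genuinely does not depend on $t$.

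For (2): the ``only if'' direction is immediate from (1), since a uniform positive lower bound on $|Z(E)|$ for $E \in P(t,t+\eta)\setminus 0$ means $0$ is isolated in $Z(P(t,t+\eta))$ (indeed $Z=0$ can only occur for the zero object, as $\Im Z > 0$ or $Z \in \R_{<0}$ on nonzero objects of the heart $P(t,t+1]$, hence on nonzero objects of any $P(t,t+\eta)$ after a rotation). For the ``if'' direction: suppose $\sigma$ is not reasonable, so there is a sequence of nonzero semistable objects $F_k$ with $|Z(F_k)| \to 0$. After translating each $F_k$ by a shift $[n_k]$ we may assume $\phi(F_k) \in (0,\eta)$ for a fixed small $\eta$ (at worst we may need to pass to a subsequence so the phases accumulate, then choose $t$ accordingly); then the $F_k \in P(t,t+\eta)$ witness that $0$ is not isolated in $Z(P(t,t+\eta))$. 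I expect this to be the most delicate bookkeeping step — getting the phases into a single window $(t,t+\eta)$ by shifting and passing to a subsequence.

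For (3): given that $\sigma$ is reasonable, fix $0<\eta<1$. The category $\AA := P(t,t+\eta)$ is quasi-abelian, and I want to show it is of finite length, i.e.\ both Artinian and Noetherian (as a quasi-abelian category, in Bridgeland's sense). An infinite proper chain of strict subobjects or quotients in $\AA$ would produce, at each step, a strict short exact sequence in $\AA$; since all terms lie in the convex-sector situation of part (1), $|Z|$ is strictly additive up to the constant $c_\eta$, so along an infinite strict chain the quantity $|Z|$ of the successive subquotients would have to be summable, forcing infinitely many of them to have $|Z|$ less than any given $\eps>0$ — but every nonzero object of $\AA$ has $|Z| \ge c_\eta m > 0$ by (1), a contradiction. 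Hence $\AA$ has finite length; taking $\eta$ small enough this is exactly Bridgeland's local-finiteness condition, so $\sigma$ is locally finite. Finally, for (4): if $Z(K_0(\DD))$ is discrete in $\C$, then in particular $0$ is isolated in the image of $Z$, hence isolated in each $Z(P(t,t+\eta))$, so $\sigma$ is reasonable by (2) — or even more directly, there are no nonzero semistable objects with $|Z|$ arbitrarily small since all values of $Z$ on nonzero objects avoid a punctured neighborhood of the origin.

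The main obstacle I anticipate is the ``if'' direction of (2): the cleanest-looking argument needs to move an arbitrary sequence of semistable objects, with a priori unrelated phases, into a common phase window $(t,t+\eta)$, which requires shifting by integers and then passing to a convergent subsequence of phases modulo $1$; one must make sure the limiting window can be taken half-open and of width $<1$ so that part (1)'s sector argument applies. The convex-cone inequality $|Z(E)| \ge c_\eta \sum |Z(E_i)|$ underlying parts (1) and (3) is elementary but should be stated carefully, with $c_\eta = \cos(\pi\eta/2) > 0$ or a similar explicit constant.
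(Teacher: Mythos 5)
Your argument follows the same essential route as the paper for parts (1), (3), and (4): introduce the convex cone $C(t,t+\eta)$ of width $<\pi$, compare $|Z|$ with the scalar product $h$ against the unit vector of phase $t+\eta/2$, and exploit $\cos(\pi\eta/2)|z|\le h(z)\le |z|$. For part (2), the paper's ``if'' direction is cleaner: it covers all phases in $(0,1]$ with the two overlapping windows $(0,3/4)$ and $(1/2,5/4)$, so that isolation of $0$ in $Z(P(0,3/4))$ and $Z(P(1/2,5/4))$ directly bounds $|Z|$ below on every nonzero semistable object of phase in $(0,1]$, hence (by shift-invariance) on all nonzero semistable objects — no subsequence extraction needed. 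Your compactness argument (accumulate phases mod $1$, pick a subsequence, choose $t$) is valid but more work; and your initial claim ``we may assume $\phi(F_k)\in(0,\eta)$'' is not achievable by shifts alone, which you yourself flag and correct.

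One wording issue in (3) worth fixing: $|Z|$ is \emph{not} additive up to a constant along a chain, because the constant would compound. What is genuinely additive on strict short exact sequences is the linear functional $h$; one then uses $\cos(\pi\eta/2)|Z(A)|\le h(A)\le |Z(A)|$ to transfer the lower bound from (1) to $h$, giving $h\ge c'>0$ on nonzero objects of $P(t,t+\eta)$ and hence that any strict filtration of an object $A$ has at most $h(A)/c'$ steps. Phrase it via $h$, as the paper does, and the argument is airtight.
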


\begin{proof} (1) Let 
$$c=\inf_{E\text{ semistable},E\neq 0} |Z(E)|>0.$$ 
Given an object  $E\in\P(t,t+\eta)$ let
$E_i$ be the HN-factors of $E$. Then all numbers $Z(E_i)$ (and $Z(E)$) lie in the cone $C(t,t+\eta)$
of complex numbers with phases between $t$ and $t+\eta$. Let $h:\C\to\R$ denote
the scalar product with the unit vector of phase $t+\eta/2$. Then we have
$\cos(\pi\eta/2)|z|\le h(z)\le |z|$ for all $z\in C(t,t+\eta)$. Hence, 
$$|Z(E)|\ge h(Z(E))=\sum_i h(Z(E_i))\ge\cos(\pi\eta/2)c.$$

\noindent
(2) The ``only if" part follows from (1). Conversely, assuming that $0$ is an isolated point of
$Z(P(0,3/4))$ and of $Z(P(1/2,5/4))$ we see that there is a universal lower bound for
$|Z(E)|$, where $E$ is semistable of the phase in $(0,1]$. This implies that $\sigma$ is reasonable.

\noindent
(3) This is similar to Lemma 4.4 of \cite{Bridgeland-K3}. The point is that if $h:\C\to R$ denotes the
scalar product with the unit vector of phase $t+\eta/2$ then $h(A)>c>0$ for a fixed constant $c$,
where $A$ is a nonzero object of $P(t,t+\eta)$. Since $h$ is an additive function with respect
to strict short exact sequences, the assertion follows.

\noindent
(4) This is clear.
\end{proof}

\begin{prop} Let $\Sigma$ be a connected component of $\Stab(\DD)$ containing some
reasonable stability condition. Then every $\sigma\in\Sigma$ is reasonable.
\end{prop}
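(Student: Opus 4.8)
The plan is to show that reasonableness is both an open and a closed condition on the connected component $\Sigma$, so that the set of reasonable stability conditions in $\Sigma$ is nonempty, open, and closed, hence all of $\Sigma$. The key quantitative invariant to track is the quantity $c(\sigma):=\inf_{E\text{ semistable},E\neq 0}|Z(E)|$, and the strategy is to prove that $c$ varies ``semicontinuously enough'' along paths: if $\tau=(U,Q)$ lies in a sufficiently small ball $B_\eps(\sigma)$ around a reasonable $\sigma$, then $c(\tau)>0$ as well.

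First I would fix $\sigma=(Z,P)\in\Sigma$ reasonable, set $c=c(\sigma)>0$, and take $\tau=(U,Q)\in B_\eps(\sigma)$ with $\eps$ small (to be chosen). Let $F$ be a nonzero $\tau$-semistable object of phase $\psi$. By the definition of the metric $d(P,Q)<\eps$, the object $F$ lies in $P[\psi-\eps,\psi+\eps]$, so its $\sigma$-HN factors $F_1,\dots,F_m$ all have $\sigma$-phases in $[\psi-\eps,\psi+\eps]$; in particular all the complex numbers $Z(F_i)$ lie in the cone $C(\psi-\eps,\psi+\eps)$, and hence so does $Z(F)=\sum_i Z(F_i)$. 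Using the projection $h$ onto the unit vector of phase $\psi$ exactly as in the proof of Lemma \ref{reason-lem}(1), we get $|Z(F)|\ge h(Z(F))=\sum_i h(Z(F_i))\ge \cos(\pi\eps)\sum_i|Z(F_i)|\ge \cos(\pi\eps)\, c$ (here I am using that each $F_i$ is $\sigma$-semistable so $|Z(F_i)|\ge c$, and $m\ge 1$). Thus $|Z(F)|\ge \cos(\pi\eps)\,c$, a bound uniform over all $\tau$-semistable $F$.

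Next I would transfer this bound from $Z$ to $U$. Since $||U-Z||_\sigma<\sin(\pi\eps)$ we have $|U(E)-Z(E)|\le \sin(\pi\eps)\,|Z(E)|$ for every $\sigma$-semistable $E$; but I need the estimate for $\tau$-semistable $F$, whose $\sigma$-HN factors $F_i$ are $\sigma$-semistable, so $|U(F_i)-Z(F_i)|\le \sin(\pi\eps)|Z(F_i)|$ and summing gives $|U(F)-Z(F)|\le \sin(\pi\eps)\sum_i|Z(F_i)|\le \tan(\pi\eps)\,|Z(F)|/\cos(\pi\eps)$ — more cleanly, $\sum_i|Z(F_i)|\le h(Z(F))/\cos(\pi\eps)\le |Z(F)|/\cos(\pi\eps)$, whence $|U(F)-Z(F)|\le (\sin(\pi\eps)/\cos(\pi\eps))|Z(F)|$. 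Combining with $|Z(F)|\ge\cos(\pi\eps)\,c$ from the previous step yields
$$|U(F)|\ge |Z(F)|-|U(F)-Z(F)|\ge \bigl(1-\tan(\pi\eps)\bigr)|Z(F)|\ge \bigl(1-\tan(\pi\eps)\bigr)\cos(\pi\eps)\,c=\bigl(\cos(\pi\eps)-\sin(\pi\eps)\bigr)c.$$
For $\eps<1/4$ the factor $\cos(\pi\eps)-\sin(\pi\eps)$ is positive, so $c(\tau)\ge(\cos\pi\eps-\sin\pi\eps)\,c>0$ and $\tau$ is reasonable. This shows the set of reasonable stability conditions is open in $\Sigma$.

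Finally, the same chain of inequalities run in the other direction shows the complement is open: if $\sigma$ is \emph{not} reasonable, i.e. $c(\sigma)=0$, then taking $\tau=(U,Q)\in B_\eps(\sigma)$ there are $\sigma$-semistable $E$ with $|Z(E)|$ arbitrarily small; each such $E$ lies in $Q[\phi_\sigma(E)-\eps,\phi_\sigma(E)+\eps]$, so its $\tau$-HN factors $E_j$ satisfy (by the cone argument applied with the roles of $P$ and $Q$ swapped) $\sum_j|U(E_j)|\le |U(E)|/\cos(\pi\eps)$, and $|U(E)|\le |Z(E)|+\tan(\pi\eps)|Z(E)|$ is small, forcing some $\tau$-semistable $E_j$ with $|U(E_j)|$ small; hence $c(\tau)=0$ and $\tau$ is not reasonable. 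Thus both ``reasonable'' and ``not reasonable'' are open in $\Sigma$; since $\Sigma$ is connected and contains a reasonable point, every $\sigma\in\Sigma$ is reasonable. \textbf{The main obstacle} I anticipate is purely bookkeeping: being careful that in each estimate the ``semistable object'' to which I apply the hypotheses $||U-Z||_\sigma<\sin\pi\eps$ and $d(P,Q)<\eps$ is semistable \emph{for the correct stability condition} — one must always pass through HN factors so as to land back among $\sigma$-semistable objects where the norm bound applies — and choosing $\eps$ small enough ($\eps<1/4$ suffices) that $\cos\pi\eps-\sin\pi\eps>0$, i.e. that the rotation of the cone does not swamp the gap.
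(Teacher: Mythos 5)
Your proposal is correct and follows essentially the same route as the paper: both show that the set of reasonable stability conditions in $\Sigma$ is clopen, by taking a stability condition in a small $B_\eps$-ball, passing to Harder--Narasimhan factors with respect to the reasonable one, and using the cone-projection estimate together with the bound $||U-Z||_{\sigma}<\sin(\pi\eps)$ to propagate the positive lower bound on $|Z(\cdot)|$ (respectively, to propagate its failure). The only differences are in bookkeeping --- you first bound $|Z(F)|$ and then transfer to $|U(F)|$, while the paper works with the new central charge directly and invokes Lemma~\ref{reason-lem}(1) for the closedness half; both arrangements are sound.
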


\begin{proof} Let $\sigma=(Z,P)$, $\sigma'=(Z',P')$ be points of $\Sigma$.
Assume first that $\sigma'$ is reasonable, and $\sigma'\in B_{\eps}(\sigma)$, where $\eps<1/4$.
Then for every $\sigma$-semistable object $E$ of phase $t$ we have
$|Z'(E)-Z(E)|<\sin(\pi\eps)|Z(E)|$ and $E\in P'(t-\eps,t+\eps)$. Hence, by Lemma \ref{reason-lem}(1),
there exists a constant $c>0$ independent of $E$ such that $|Z'(E)|>c$. Therefore,
$$|Z(E)|>(1+\sin(\pi\eps))^{-1}|Z'(E)|>(1+\sin(\pi\eps))^{-1}c,$$
so $\sigma$ is reasonable. This shows that the set of reasonable stabilities is closed.
Conversely, assume that $\sigma$ is reasonable and $\sigma'\in B_{\eps}(\sigma)$, where $\eps$
is sufficiently small. Given a $\sigma'$-semistable object $E$ of phase $t$ we have
$E\in P(t-\eps,t+\eps)$. Let $(E_i)$ be the HN-factors of $E$ with respect to $\sigma$.
Then $E_i\in P(t-\eps,t+\eps)\sub P'(t-2\eps,t+2\eps)$. Let us denote by $h:\C\to\R$ the scalar
product with the unit vector of phase $t$. Then
$$|Z'(E)|=h(Z'(E))=\sum_i h(Z'(E_i))\ge \frac{1}{2}\sum_i |Z'(E_i)|$$
provided $\eps$ is small enough. 
But $|Z'(E_1)|>(1-\sin(\pi\eps))|Z(E_1)|$ which is bounded below by a positive constant 
depending only on $\eps$. Hence, $\sigma'$ is reasonable, so the set of reasonable stabilities is open.
\end{proof}

\begin{cor} If $\Sigma\sub\Stab(\DD)$ is a connected component containing some
stability condition such that the corresponding central charge has discrete image, then
every $\sigma\in\Sigma$ is reasonable.
\end{cor}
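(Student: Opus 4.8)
The plan is to deduce this Corollary directly from the Proposition immediately preceding it, using part (4) of Lemma~\ref{reason-lem} as the bridge.

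First I would observe that the hypothesis gives a stability condition $\sigma_0=(Z_0,P_0)\in\Sigma$ whose central charge $Z_0$ has discrete image in $\C$. By Lemma~\ref{reason-lem}(4), such a $\sigma_0$ is automatically reasonable: if the image of $Z_0$ is discrete, then $0$ is an isolated point of that image, hence a fortiori an isolated point of $Z_0(P_0(t,t+\eta))$ for every $t$ and every $0<\eta<1$, and then part (2) of the Lemma (or part (4) directly, which is exactly this implication) shows $\sigma_0$ is reasonable.

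Then I would invoke the Proposition: since $\Sigma$ is a connected component of $\Stab(\DD)$ containing the reasonable stability condition $\sigma_0$, every $\sigma\in\Sigma$ is reasonable. This completes the argument.

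There is essentially no obstacle here — the Corollary is a one-line consequence of combining Lemma~\ref{reason-lem}(4) with the Proposition. The only thing worth spelling out is why the discreteness hypothesis as stated (``the corresponding central charge has discrete image'') matches the hypothesis of Lemma~\ref{reason-lem}(4) verbatim, which it does, so no extra work is needed. A short proof suffices.

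\begin{proof}
By hypothesis $\Sigma$ contains a stability condition $\sigma_0=(Z_0,P_0)$ such that $Z_0$ has discrete image in $\C$. By Lemma~\ref{reason-lem}(4), $\sigma_0$ is reasonable. Hence $\Sigma$ is a connected component of $\Stab(\DD)$ containing a reasonable stability condition, so by the Proposition every $\sigma\in\Sigma$ is reasonable.
\end{proof}
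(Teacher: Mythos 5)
Your proof is correct and matches the paper's intended argument: the corollary is stated without an explicit proof precisely because it follows immediately from Lemma~\ref{reason-lem}(4) together with the preceding Proposition, which is exactly the chain you give.
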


Note that this Corollary implies that all (locally finite) stability conditions constructed
in \cite{AB}, \cite{Bridgeland06}, \cite{Bridgeland-K3} and \cite{Macri07}
are reasonable.

\section{Gluing construction}

The general gluing construction for $t$-structures was invented in \cite{BBD}.
We start by stating a particular case of this construction (see section 3.1 of
\cite{Polishchuk06} for a related construction).
Let $\DD$ be a triangulated category equipped with a semiorthogonal decomposition
$\DD=\lan\DD_{1}, \DD_{2}\ran$.
Note that for $E \in \DD$ the
objects $E_1\in\DD_1$ and $E_2\in\DD_2$ from the exact triangle \eqref{semiorth-tr-eq}
depend functorially on $E$. Namely, 
$E_2=\rho_2(E)$,
where $\rho_2$ is the right adjoint functor to the inclusion $\DD_2\to \DD$, and $E_1=\lambda_1(E)$,
where $\lambda_1$ is the left adjoint functor to the inclusion $\DD_1\to \DD$.

\begin{lem} 
Assume we have a semiorthogonal decomposition $\DD=\lan\DD_{1}, \DD_{2}\ran$
and t-structures $(\DD_{i}^{\leq 0}, \DD_{i}^{\geq 0})$ with the hearts $H_{i}$ on $\DD_{i}$ (where $i=1,2$), such that $\Hom_{\DD}^{\leq 0}(H_1,H_2)=0$. Then
there is a t-structure on $\DD$ with the heart 
\begin{equation}
\label{H-for}
H=\{ X\in \DD \ |\ \lambda_1(X)\in H_1, \rho_2(X)\in H_2\}.
\end{equation}
With respect to this $t$-structure on $\DD$ the functors $\la_1:\DD\to\DD_1$ and
$\rho_2:\DD\to \DD_2$ are $t$-exact.
\end{lem}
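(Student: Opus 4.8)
The plan is to construct the $t$-structure on $\DD$ explicitly by gluing, in the style of \cite{BBD}, and then to verify that the heart so obtained coincides with the category $H$ described by formula \eqref{H-for}. First I would define the candidate aisle
$$\DD^{\le 0}=\{X\in\DD\ |\ \la_1(X)\in\DD_1^{\le 0},\ \rho_2(X)\in\DD_2^{\le 0}\}$$
and, dually,
$$\DD^{\ge 0}=\{X\in\DD\ |\ \la_1(X)\in\DD_1^{\ge 0},\ \rho_2(X)\in\DD_2^{\ge 0}\}.$$
The orthogonality $\Hom^{\le 0}(H_1,H_2)=0$ should be reformulated as $\Hom_{\DD}(\DD_1^{\le 0},\DD_2^{\ge 1})=0$ (after unwinding the relation between the hearts and the truncation functors on each $\DD_i$), which is exactly the compatibility condition needed for the BBD gluing of the two $t$-structures along the recollement given by the semiorthogonal decomposition. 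Granting that input, the general gluing theorem produces a $t$-structure $(\DD^{\le 0},\DD^{\ge 0})$ on $\DD$ whose truncation functors can be written down from those of $\DD_1$ and $\DD_2$ together with the adjoints $\la_1,\rho_2$ and the inclusion functors; in particular the construction builds, for each $X$, an exact triangle assembling $X$ from a piece in $\DD_2^{\le 0}$-type position and a piece in $\DD_1$-type position.

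Next I would identify the heart. By definition the heart is $\DD^{\le 0}\cap\DD^{\ge 0}$, and since both defining conditions are imposed termwise through $\la_1$ and $\rho_2$, this intersection is precisely $\{X\ |\ \la_1(X)\in H_1,\ \rho_2(X)\in H_2\}$, i.e. the category $H$ of \eqref{H-for}. Here one must be a little careful: $\la_1$ and $\rho_2$ are not a priori $t$-exact for arbitrary $t$-structures, so the equality of the heart with $H$ is not a triviality — it uses that the $t$-structure was glued so as to make these functors interact correctly with truncation. Concretely, I would check that for $X\in\DD^{\le 0}$ one has $\la_1(X)\in\DD_1^{\le 0}$ and $\rho_2(X)\in\DD_2^{\le 0}$ directly from the definition of the glued aisle, and symmetrically on the $\ge 0$ side; combining, $\la_1$ and $\rho_2$ send $H$ into $H_1$ and $H_2$ respectively.

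Finally, for the $t$-exactness claim: right-exactness of $\la_1$ (it sends $\DD^{\le 0}$ to $\DD_1^{\le 0}$) and left-exactness of $\rho_2$ (it sends $\DD^{\ge 0}$ to $\DD_2^{\ge 0}$) are immediate from the very definitions of $\DD^{\le 0}$ and $\DD^{\ge 0}$. The substantive direction is the other half: that $\la_1$ is also left-exact and $\rho_2$ also right-exact. For this I would use the semiorthogonal triangle $E_2\to E\to E_1\to E_2[1]$ with $E_i=\rho_2(E)[\,\cdot\,]$ resp. $\la_1(E)$, apply it to $X\in\DD^{\ge 0}$, and feed it through the long exact cohomology sequence of the glued $t$-structure, using the orthogonality hypothesis to kill the cross terms $\Hom^{\le 0}(H_1,H_2)$; the vanishing forces the connecting maps to be isomorphisms in the relevant degrees, yielding $\la_1(X)\in\DD_1^{\ge 0}$. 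The dual argument handles $\rho_2$. The main obstacle I anticipate is precisely this last point — pinning down exactly how the orthogonality condition \eqref{Hom-H1-H2} is used to upgrade the one-sided exactness of the adjoints to genuine $t$-exactness — together with the bookkeeping of translating between the hypothesis phrased on hearts and the formulation on aisles that the BBD machine wants as input.
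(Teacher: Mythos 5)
Your setup is on the right track: the candidate aisle $\DD^{\le 0}=\{X:\la_1(X)\in\DD_1^{\le 0},\ \rho_2(X)\in\DD_2^{\le 0}\}$ and the symmetric co-aisle are exactly what the paper uses, the reformulation of the orthogonality hypothesis as $\Hom_{\DD}(\DD_1^{\le 0},\DD_2^{\ge 0})=0$ is correct, and the identification of the heart with $H$ is then tautological. But there is a genuine gap at the heart of the argument: you invoke ``the general gluing theorem'' of \cite{BBD} for the \emph{existence} of the $t$-structure, and that theorem requires a full recollement, i.e.\ six functors. A semiorthogonal decomposition supplies only half of those: the inclusion $\DD_2\hookrightarrow\DD$ has a right adjoint $\rho_2$ but in general no left adjoint, and the inclusion $\DD_1\hookrightarrow\DD$ has a left adjoint $\la_1$ but in general no right adjoint. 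Without the missing adjoints, the BBD machine simply does not run, so you cannot ``grant that input.'' Indeed, in a genuine recollement no compatibility condition between the two $t$-structures is needed at all, which is a sign that the hypothesis $\Hom^{\le 0}(H_1,H_2)=0$ is playing a different role here than the one you ascribe to it: it is not a compatibility condition fed into BBD, but rather a substitute for the missing half of the recollement. The paper closes this gap by a direct construction of the truncation triangle $A\to E\to B\to A[1]$: one takes the semiorthogonal triangle $E_2\to E\to E_1\to E_2[1]$, truncates $E_1$ and $E_2$ in their own $t$-structures, and reassembles via two applications of the octahedron axiom; the orthogonality $\Hom^1(\tau^1_{\le 0}E_1,\tau^2_{\ge 1}E_2)=0$ (an instance of \eqref{t-str-orth}) is exactly what allows the pieces to be glued into $A$ and $B$. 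That octahedron argument is the substantive content your proposal is missing.

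Two smaller points. First, once the $t$-structure exists with the symmetric definitions you wrote, $t$-exactness of $\la_1$ and $\rho_2$ is immediate in \emph{both} directions: your $\DD^{\ge 0}$ already requires $\la_1(X)\in\DD_1^{\ge 0}$ and $\rho_2(X)\in\DD_2^{\ge 0}$, so there is no ``substantive other half'' to prove, and the long-exact-sequence argument you sketch there is unnecessary. You seem to be half-remembering the asymmetric BBD definitions (where the co-aisle uses $i^!$ while the aisle uses $i^*$), but those are not the definitions you or the paper actually use. Second, checking $\Hom(\DD^{\le 0},\DD^{\ge 1})=0$ does require a short argument (reduce to the four $\Hom$-groups between the $\la_1$- and $\rho_2$-components and use semiorthogonality plus \eqref{t-str-orth}); this is easy but should be stated rather than absorbed into the appeal to BBD.
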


\begin{proof} Set $\DD^{[a,b]}=\{ X\in\DD \|\ \lambda_1(X)\in \DD_1^{[a,b]}, \rho_2(X)\in
\DD_2^{[a,b]}\}.$ First, we have to check that $\Hom(\DD^{\le 0},\DD^{\ge 1})=0$. 
Note that our orthogonality assumption for the hearts is equivalent to
\begin{equation}\label{t-str-orth}
\Hom_{\DD}(\DD_1^{\le 0}, \DD_2^{\ge 0})=0.
\end{equation}
Now given
$X\in\DD^{\le 0}$ and $Y\in\DD^{\ge 1}$, the canonical exact triangles for $X$ and $Y$ show
that it is enough to check the vanishings 
$$\Hom(\rho_2(X),\rho_2(Y))=\Hom(\rho_2(X),\la_1(Y))=\Hom(\la_1(X),\rho_2(Y))=
\Hom(\lambda_1(X),\la_2(Y))=0.$$
The first and the fourth groups vanish since we start with $t$-structures on $\DD_1$ and $\DD_2$.
The second group vanishes by semiorthogonality, and the third---by \eqref{t-str-orth}.

Next, let us check that for every $E\in\DD$ there exists an exact triangle 
$$A\to E\to B\to A[1]$$
with $A\in\DD^{\le 0}$ and $B\in\DD^{\ge 1}$. Consider the canonical triangle
\eqref{semiorth-tr-eq}. We are going to construct
$A$ and $B$ in such a way that $A$ (resp., $B$) will be an extension of $\tau^1_{\le 0}E_1$
by $\tau^2_{\le 0}E_2$ (resp., of $\tau^1_{\ge 1}E_1$ by $\tau^2_{\ge 1}E_2$), where
$\tau^1_*$ and $\tau^2_*$ denote the truncation functors on $\DD_1$ and $\DD_2$,
respectively. First, applying the octahedron axiom to the exact triangles $E_2\to E\to E_1\to\ldots$ and
$\tau^1_{\le 0}E_1\to E_1\to \tau^1_{\ge 1}E_1\to\ldots$ we construct an exact
triangle 
$$\wt{A}\to E\to\tau^1_{\ge 1}E_1\to\ldots,$$ 
where $\wt{A}$ is an extension of
$\tau^1_{\le 0}E_1$ by $E_2$. Next, consider the exact triangle
$$\tau^2_{\le 0}E_2\to E_2\to\tau^2_{\ge 1}E_2\to\ldots.$$ 
The condition \eqref{t-str-orth} implies that
$\Hom^1(\tau^1_{\le 0}E_1, \tau^2_{\ge 1}E_2)=0$. Hence, there exists an exact triangle
$$A\to\wt{A}\to\tau^2_{\ge 1}E_2\to\ldots,$$ 
where $A$ is an extension of $\tau^1_{\le 0}E_1$ by
$\tau^2_{\le 0}E_2$. Applying the octahedron axiom once more we deduce the required statement.
 \end{proof}

Note that in the situation of the above Lemma we have $H_1\sub H$ and $H_2\sub H$.
Furthermore, every object $E \in H$ fits into an exact sequence in $H$
\begin{equation}\label{H1-H2-seq}
0 \rightarrow \rho_{2}(E) \rightarrow E \rightarrow \lambda_{1}(E) \rightarrow 0,
\end{equation}
where $\rho_2(E)\in H_2$ and $\lambda_1(E)\in H_1$.
Therefore, we also have
\begin{equation}\label{H-for2}
H=\lan H_2, H_1\ran,
\end{equation}
and $(H_{2}, H_{1})$ is a torsion pair in $H$ (see \cite{HRS} for the definition and basic properties
of torsion pairs).

Assume now that the hearts $H_1$ and $H_2$ are equipped with
stability functions $Z_i: K_0(H_i)\to \C$. Then the formula
\begin{equation}\label{Z-for}
Z(X)=Z_1(\lambda_1(X))+Z_2(\rho_2(X))
\end{equation}
defines a stability function on the glued heart $H$.

\begin{definition}
Suppose we have stability conditions $\sigma_1=(Z_1,P_1)$ on $\DD_1$ and 
$\sigma_2=(Z_2,P_2)$ on $\DD_2$, 
such that the corresponding hearts $H_1=P_1(0,1]$ and $H_2=P_2(0,1]$ satisfy $\Hom_{\DD}^{\leq 0}(H_1,H_2)=0$. Then we say that a stability condition $\sigma=(Z,P)$ on $\DD$ is {\it glued from } 
$\sigma_1$
and $\sigma_2$ if $Z$ is given by \eqref{Z-for}, and the heart $H=P(0,1]$ is given by \eqref{H-for}
(or equivalently, by \eqref{H-for2}).
\end{definition}

Note that this glued stability condition is uniquely determined by $\sigma_1$ and $\sigma_2$.
It exists if and only if the Harder-Narasimhan property for the stability function $Z$ on the glued
heart $H$ is satisfied. We have the following easy properties of glued stability conditions.

\begin{prop}\label{char-prop}
\begin{enumerate}
\item
A stability condition $\sigma=(Z,P)$ on $\DD$ is glued from $\sigma_1=(Z_1,P_1)$ on $\DD_1$
and $\sigma_2=(Z_2,P_2)$ on $\DD_2$ if and only if $Z_i=Z|_{\DD_i}$ for $i=1,2$,
$\Hom^{\leq 0}(H_1,H_2)=0$ and $H_i\sub H$ for $i=1,2$, where $H=P(0,1]$,
$H_i=P_i(0,1]$.
\item
Let $\sigma$ be a stability condition on $\DD$ with the central charge $Z$ and the
heart $H$. Assume that $H$ is glued from the hearts
$H_1\sub \DD_1$ and $H_2\sub \DD_2$, where $\Hom^{\leq 0}(H_1,H_2)=0$, so that
\eqref{H-for} holds. Then for $i=1,2$ there exists a stability condition $\sigma_i$ on $\DD_i$ 
with the heart $H_i$ and the central charge $Z_i=Z|_{\DD_i}$, so that $\sigma$ is
glued from $\sigma_1$ and $\sigma_2$.
\item
If $\sigma=(Z,P)$ is glued from $\sigma_1=(Z_1,P_1)$ and $\sigma_2=(Z_2,P_2)$ then
for every $\phi\in\R$ one has $P_1(\phi)\sub P(\phi)$ and $P_2(\phi)\sub P(\phi)$.
\end{enumerate}
\end{prop}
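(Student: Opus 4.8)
The plan is to prove the three statements of Proposition \ref{char-prop} in order, relying throughout on the description of the glued heart from the preceding Lemma and the torsion-pair structure $H=\lan H_2,H_1\ran$ recorded in \eqref{H-for2}.

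For part (1): the forward implication is essentially the definition together with the remarks already made (that $H_1,H_2\sub H$ and that $Z$ restricts correctly, which is immediate from \eqref{Z-for} since $\la_1$ and $\rho_2$ act as the identity on $\DD_1$ and $\DD_2$ respectively). For the converse, I would argue as follows. Given the hypotheses, $H$ is a heart containing $H_1$ and $H_2$ with $\Hom^{\le 0}(H_1,H_2)=0$; since $H$ is extension-closed it contains the extension-closure $\lan H_2,H_1\ran$, which by the Lemma is exactly the glued heart $H'$ of formula \eqref{H-for}. To get the reverse inclusion $H\sub H'$ I would use that both $H$ and $H'$ are hearts of bounded $t$-structures on $\DD$: one heart contained in another forces equality (if $H'\subsetneq H$ then taking $X\in H\setminus H'$ and looking at its $H'$-cohomology objects, which lie in $H'\sub H$, would express $X$ as a nonzero iterated extension within $H$ of objects shifted out of degree zero, contradicting $X\in H$ — this is the standard ``no heart is a proper subcategory of another heart'' fact). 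Once $H=H'$, the central charge is forced to be \eqref{Z-for} because $Z$ is additive on the short exact sequences \eqref{H1-H2-seq} and restricts to $Z_i$ on $\DD_i$; hence $\sigma$ is glued from the $\sigma_i$ (with $\sigma_i$ supplied by part (2), or one simply notes the data match the definition).

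For part (2): one must produce stability conditions $\sigma_i$ on $\DD_i$ with heart $H_i$ and central charge $Z_i:=Z|_{\DD_i}$. The heart $H_i\sub\DD_i$ is a heart of a bounded $t$-structure on $\DD_i$ (this is part of the gluing hypothesis), and $Z_i$ is a stability function on it: positivity of $\Im Z_i$ or negativity on $\R_{<0}$ for nonzero objects of $H_i$ follows because such objects are also nonzero objects of $H$ and $Z$ is a stability function on $H$. The only real content is the Harder--Narasimhan property for $(Z_i,H_i)$, and I expect \textbf{this to be the main obstacle} — though a mild one. The idea: given $E\in H_1$, take its HN filtration in $H$ with respect to $\sigma$; I claim all HN factors lie in $H_1$. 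Indeed, applying $\la_1$ (which is $t$-exact by the Lemma, and is the identity on $\DD_1$) to the filtration shows the factors, viewed through $\la_1$, reconstruct $E$; combined with the fact that $H_1$ is closed under subquotients in $H$ (it is the torsion-free part of the torsion pair $(H_2,H_1)$, hence closed under subobjects and quotients — actually one must be slightly careful: a torsion-free class is closed under subobjects and extensions; for closure under quotients in this specific situation one uses $H_1=\{X\in H: \rho_2(X)=0\}$ together with right-exactness of $\rho_2$ from the $t$-exactness statement). Concretely: $\rho_2$ is $t$-exact, so it sends the HN triangles to exact triangles in $H_2$; since $\rho_2(E)=0$ and $\rho_2$ kills a two-step filtration iff it kills both pieces, induction gives $\rho_2$ vanishes on every HN factor, i.e.\ each factor lies in $H_1$. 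The semistability phases and $Z_1$-slopes agree because $Z_1=Z|_{\DD_1}$ and semistability of an object of $H_1$ with respect to $\sigma_1$ versus $\sigma$ coincides once we know all relevant sub/quotient objects in $H$ that could destabilize it already lie in $H_1$. The case $i=2$ is symmetric, using that $H_2$ is the torsion part, closed under subobjects and quotients, and $\la_1$ is $t$-exact with $\la_1(E)=0$ for $E\in H_2$. Finally, that $\sigma$ is glued from $\sigma_1,\sigma_2$ is then immediate: $Z$ is \eqref{Z-for} by additivity along \eqref{H1-H2-seq}, and $H$ is \eqref{H-for} by hypothesis.

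For part (3): this is the easiest. Let $E\in P_1(\phi)$, so $E$ is $\sigma_1$-semistable of phase $\phi$ and lies in $\DD_1\sub\DD$; after a shift we may assume $\phi\in(0,1]$ so $E\in H_1\sub H$. I want $E\in P(\phi)$, i.e.\ $E$ is $\sigma$-semistable of the same phase. Suppose not; then $E$ has a nonzero subobject $A\subsetneq E$ in $H$ with $\phi_\sigma(A)>\phi$ (or a quotient of smaller phase). Consider $\rho_2(A)\in H_2$: if it is nonzero it is a subobject of $\rho_2(E)=0$, impossible, so $\rho_2(A)=0$, meaning $A\in H_1$. But then $A\subsetneq E$ is a subobject in $H_1$ (since $H_1$ is closed under subobjects in $H$), and $Z$-slope equals $Z_1$-slope on objects of $H_1$, so $\phi_\sigma(A)=\phi_{\sigma_1}(A)\le\phi$ by $\sigma_1$-semistability of $E$ — contradiction. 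Hence $E$ is $\sigma$-semistable of phase $\phi$. The inclusion $P_2(\phi)\sub P(\phi)$ is proved the same way with $\la_1$ in place of $\rho_2$ (a would-be destabilizing quotient $E\surj B$ with smaller phase has $\la_1(B)$ a quotient of $\la_1(E)=0$, so $B\in H_2$, and $H_2$ is closed under quotients in $H$). One then extends to all $\phi\in\R$ by applying the shift automorphism $[1]$, which commutes with everything and sends $P_i(\phi)$ to $P_i(\phi+1)$ and $P(\phi)$ to $P(\phi+1)$.
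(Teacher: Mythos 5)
Your proof is correct and follows essentially the same route as the paper's: part (1) via the fact that nested hearts of bounded $t$-structures coincide and the central charge is determined by its restrictions; part (2) via the observation that $H_1=\ker(\rho_2|_H)$ and $H_2=\ker(\lambda_1|_H)$ are closed under subobjects and quotients in $H$ (so HN filtrations restrict); part (3) via the same closure properties. The only difference is that you spell out the "easily implies" step of part (2) and the torsion-pair bookkeeping in more detail than the paper does.
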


\begin{proof} (1) Let us observe that for every $E\in \DD$ one has the equality
$[E]=[\rho_2(E)]+[\lambda_1(E)]$ in $K_0(\DD)$, so the definition \eqref{Z-for} is
equivalent to the condition $Z|_{\DD_i}=Z_i$ for $i=1,2$. It remains to note also that
the embeddings $H_1,H_2\sub H$ imply that $\lan H_1,H_2\ran\sub H$. Since both
are hearts of nondegenerate $t$-structures this is equivalent to the equality \eqref{H-for2}.

\noindent
(2) The subcategory $H_1\sub H$ (resp., $H_2\sub H$) is exactly
the kernel of the exact functor $\rho_2:H\to H_2$ (resp., $\lambda_1:H\to H_1$).
It follows that these subcategories are closed under passing to subobjects
and quotient-objects in $H$.
This easily implies that the Harder-Narasimhan property holds for $Z|_{H_i}$ on $H_i$, $i=1,2$,
so we obtain the stability conditions on $\DD_1$ and $\DD_2$. The fact that $\sigma$ is glued
from these stabilities follows from definition.

\noindent
(3) It is enough to check this in the case when $\phi\in(0,1]$. Then this follows immediately
from the fact that $H_1$ and $H_2$ are stable under subobjects and quotient-objects in $H$.
\end{proof}

In the case of semiorthogonal decompositions associated with a full exceptional collection
$(E_1,\ldots,E_n)$ the above gluing procedure was considered
by Macr\`i in \cite{Macri07}. Namely, we can consider the semiorthogonal decomposition
$\DD=\lan\lan E_1\ran,\ldots,\lan E_n\ran\ran$, and equip $\lan E_i\ran$ with the $t$-structure 
for which $E_i$ belongs to the heart. Then our
orthogonality condition on the hearts reduces to the condition
that the collection is $\Ext$-exceptional, i.e., $\Hom^{\leq 0}(E_i,E_j)=0$ for $i<j$, and the glued heart
is $H=[E_{1}, \cdots, E_{n}]$. 
We say that a stability condition $\sigma=(Z,P)$ on $\DD$ is \emph{glued from an $\Ext$-exceptional collection} $(E_{1}, \cdots, E_{n})$ if $P(0,1]=H$.
Note that in this case the Harder-Narasimhan property is automatically satisfied
for any stability function on $H$. We will generalize this in Proposition \ref{GluingCondition}.


\section{Harder-Narasimhan property and gluing of stability conditions}\label{HN-sec}

In this section we show how to check the Harder-Narasimhan property for the
glued stability function under different sets of additional assumptions.

We start with the following basic criterion which is a slight generalization of Proposition 2.4 of
\cite{Bridgeland06} (the proof is the same as in {\it loc. cit.}, using properties of quasi-abelian
categories). Recall that $\phi(z)$ denotes the phase of $z\in\C$.

\begin{prop}\label{HN-prop} 
Suppose $\mathcal{A}$ is a quasi-abelian category with a stability function 
$Z: K_0(\mathcal{A}) \rightarrow \C$. Assume that for a pair of $Z$-semistable objects
$E,F\in\mathcal{A}$ such that $\phi(E)>\phi(F)$ one always has $\Hom_{\AA}(E,F)=0$,
where we denote $\phi(E):=\phi(Z(E))$.
Assume also that the following chain conditions are satisfied:
\begin{enumerate}
\item
there are no infinite sequences of strict monomorphisms in $\mathcal{A}$
$$\cdots \subset E_{j+1} \subset E_{j} \subset \cdots \subset E_{2} \subset E_{1}$$
with $\phi(E_{j+1}) > \phi(E_{j})$ for all $j$,
\item
there are no infinite sequences of strict epimorphisms in $\mathcal{A}$
$$E_{1} \surj E_{2} \surj \cdots \surj E_{j} \surj E_{j+1} \surj \cdots$$
with $\phi(E_{j}) > \phi(E_{j+1})$ for all $j$.
\end{enumerate}
Then $Z$ has the Harder-Narasimhan property on $\mathcal{A}$.
\end{prop}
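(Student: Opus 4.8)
The plan is to follow the proof of Proposition 2.4 in \cite{Bridgeland06} essentially verbatim, with the only adjustments being bookkeeping to accommodate the quasi-abelian (rather than abelian) setting — for which the relevant notions are that of strict monomorphisms and strict epimorphisms, and the associated ``strict'' short exact sequences, on which $Z$ is additive and for which the usual diagram lemmas still hold in a quasi-abelian category. First I would recall the standard reduction: it suffices to produce, for every nonzero $E\in\AA$, a maximal destabilizing subobject, i.e.\ a strict monomorphism $A\hookrightarrow E$ with $A$ semistable and $\phi(A)\geq\phi(B)$ for every strict quotient $E\surj B$, with equality only if $B$ is itself a quotient of $E$ in the appropriate sense; iterating this on the quotient $E/A$ and using the chain conditions to guarantee termination then yields the full HN filtration. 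So the core of the argument is the construction of the maximal destabilizing subobject.

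For that construction I would argue as follows. Consider the set $\Phi$ of phases $\phi(A)$ as $A$ ranges over all nonzero strict subobjects of $E$. Using chain condition (1), one shows that $\Phi$ is bounded above and that the supremum $\phi_{\max}$ is attained; indeed, if it were not attained one could build an infinite increasing sequence of strict subobjects with strictly increasing phases, contradicting (1). (The point is that given two strict subobjects $A_1,A_2$ with $\phi(A_1),\phi(A_2)\ge\phi$, a short seesaw/additivity argument on the strict exact sequence relating $A_1$, $A_1+A_2$ and $A_1+A_2/A_1$ — a subquotient of $A_2$ — shows $\phi(A_1+A_2)\ge\phi$, so one can always enlarge.) Among all strict subobjects $A\subset E$ attaining $\phi(A)=\phi_{\max}$, pick one maximal with respect to inclusion — again possible by (1). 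I then claim this $A$ is the maximal destabilizing subobject: it is semistable because any strict subobject $A'\subset A$ has $\phi(A')\le\phi_{\max}=\phi(A)$ by definition of $\phi_{\max}$; and for any strict quotient $E\surj B$ one checks $\phi(A)\ge\phi(B)$ by composing $A\hookrightarrow E\surj B$, noting that the image of this composite is a strict subquotient whose phase is $\ge\phi(A)$ if nonzero while the kernel is a strict subobject of $A$, hence semistable, and playing off the additivity of $Z$ across the resulting strict exact sequences, together with the hypothesis $\Hom_\AA(E',F')=0$ for semistable $E',F'$ with $\phi(E')>\phi(F')$ to rule out the bad configurations.

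Having the maximal destabilizing subobject $A_1\subset E$, I set $E^{(1)}=E/A_1$ (a strict quotient, so again in $\AA$), repeat to get $A_2/A_1$ the maximal destabilizing subobject of $E^{(1)}$, and so on, producing a chain of strict epimorphisms $E=E^{(0)}\surj E^{(1)}\surj E^{(2)}\surj\cdots$ with strictly decreasing phases $\phi(A_1/A_0)>\phi(A_2/A_1)>\cdots$ (the strict decrease is forced by maximality of each destabilizing subobject). Chain condition (2) then forces this sequence to terminate after finitely many steps, and the resulting filtration $0=A_0\subset A_1\subset\cdots\subset A_n=E$ by strict subobjects with semistable strict quotients of decreasing phase is the desired Harder--Narasimhan filtration.

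The main obstacle I anticipate is not any deep idea but the care required to run all the elementary diagram-chasing arguments in a quasi-abelian category: one must consistently work with strict monos/epis, verify that images, coimages, sums and intersections of strict subobjects behave well enough, and check that $Z$ is additive precisely on the strict short exact sequences that arise. Bridgeland's original proof is written for abelian $\AA$, and the statement here explicitly says ``the proof is the same as in \emph{loc.\ cit.}, using properties of quasi-abelian categories,'' so the real content of writing this out is confirming that each invocation of a short exact sequence in the abelian proof is in fact an invocation of a strict one — which it is, because the sub/quotient objects produced by the construction are all strict by fiat. No genuinely new argument is needed.
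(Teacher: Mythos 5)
Your overall plan matches the paper exactly: the paper's ``proof'' of Proposition \ref{HN-prop} is literally a one-line reference to Bridgeland's Proposition 2.4, adapted to the quasi-abelian setting by working with strict monomorphisms and strict epimorphisms, which is precisely the plan you announce. (Your variant of constructing a maximal destabilizing \emph{subobject} rather than Bridgeland's maximal destabilizing \emph{quotient} is the dual construction; either works, with the roles of conditions (1) and (2) swapped.)

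However, the specific argument you give for the existence of the maximal destabilizing subobject is incorrect as stated. You claim that if the supremum $\phi_{\max}$ of the phases of strict subobjects of $E$ were not attained, you could build ``an infinite increasing sequence of strict subobjects with strictly increasing phases, contradicting (1).'' But condition (1) forbids an infinite \emph{descending} chain $E_1 \supset E_2 \supset \cdots$ of strict subobjects with $\phi(E_{j+1}) > \phi(E_j)$; it says nothing about an ascending chain $A_1 \subset A_2 \subset \cdots$. The sum-and-enlarge argument you sketch produces an ascending chain, which (1) does not rule out, so the contradiction does not arise. The correct dualisation of Bridgeland's existence argument does not go via the supremum of phases at all: one assumes $E$ has no maximal destabilizing subobject, deduces in particular that $E$ is not semistable, extracts a strict short exact sequence $0 \to E' \to E \to B \to 0$ with $\phi(E') > \phi(E) > \phi(B)$, shows that if $E'$ had a maximal destabilizing subobject then so would $E$, and so concludes that $E'$ also has none; iterating produces a \emph{strictly decreasing} chain $E \supset E' \supset E'' \supset \cdots$ of strict subobjects with \emph{strictly increasing} phases, which is exactly what condition (1) forbids. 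Your termination argument via condition (2) is fine, but the existence step needs to be repaired along these lines.
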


Quasi-abelian categories often arise as follows. Consider an abelian category $\AA$
equipped with a torsion pair $(\TT,\FF)$. The both $\TT$ and $\FF$ are quasi-abelian
categories. Indeed, this follows from Lemma 1.2.34 of \cite{Schn}, using the tilted
abelian category $\AA^t$. For example, to check that $\TT$ is quasi-abelian
we use the fact that the embedding of $\TT$ into
$\AA$ is stable under quotients, while the embedding of $\TT$ into $\AA^t$
is stable under subobjects.

\begin{lem}\label{Z-torsion-lem}
Let $\AA$ be an abelian category equipped with a torsion pair $(\TT,\FF)$. Suppose
$Z$ is a stability function on $\AA$ such that for any nonzero $T\in\TT$ and $F\in\FF$ 
one has $\phi(T)>\phi(F)$ (where as before we set $\phi(F):=\phi(Z(F))$). 
Let $Z|_{\TT}$ and $Z|_{\FF}$ be the stability functions
on the exact categories $\TT$ and $\FF$ induced by $Z$.
Then every $Z|_{\TT}$-semistable object of $\TT$
(resp., $Z|_{\FF}$-semistable object of $\FF$) is $Z$-semistable as an object of $\AA$.
\end{lem}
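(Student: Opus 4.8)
The plan is to prove the assertion for $\TT$, the case of $\FF$ being entirely dual. So let $T\in\TT$ be $Z|_{\TT}$-semistable; since $\AA$ is abelian, it suffices to verify that every subobject $A\subset T$ in $\AA$ satisfies $\phi(A)\le\phi(T)$. The first step is to split $A$ using the torsion pair: there is a short exact sequence $0\to A'\to A\to A''\to 0$ in $\AA$ with $A'\in\TT$ and $A''\in\FF$.

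The key point is that $A'$ is not merely a subobject of $T$ in $\AA$ but a \emph{strict} subobject of $T$ inside the quasi-abelian category $\TT$. Indeed $A'\subset A\subset T$, and since $\TT$ is closed under quotients in $\AA$, the quotient $T/A'$ again lies in $\TT$, so $0\to A'\to T\to T/A'\to 0$ is a strict short exact sequence in $\TT$. Hence $Z|_{\TT}$-semistability of $T$ gives $\phi(A')\le\phi(T)$ whenever $A'\neq 0$. Combined with the hypothesis that $\phi(T_0)>\phi(F_0)$ for all nonzero $T_0\in\TT$ and $F_0\in\FF$, applied to $T_0=A'$ and $F_0=A''$, this yields $\phi(A'')<\phi(A')\le\phi(T)$ in the case where both $A'$ and $A''$ are nonzero.

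I would then finish with the elementary fact that for nonzero $z_1,z_2$ lying in the region $\{re^{i\pi\phi}\ :\ r>0,\ 0<\phi\le 1\}$ — which is exactly where every value of a stability function lies — the phase of $z_1+z_2$ lies between $\phi(z_1)$ and $\phi(z_2)$, because the two rays span a convex cone of angular width strictly less than $\pi$. Taking $z_1=Z(A')$ and $z_2=Z(A'')$ gives $\phi(A)\le\max(\phi(A'),\phi(A''))=\phi(A')\le\phi(T)$. The degenerate cases are immediate: if $A''=0$ then $A=A'\in\TT$ and $\phi(A)=\phi(A')\le\phi(T)$; if $A'=0$ then $A=A''\in\FF$ and $\phi(A)<\phi(T)$ directly from the phase hypothesis. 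I do not expect a genuine obstacle here; the only points requiring a little care are the verification that the torsion part $A'$ is a strict subobject of $T$ in $\TT$ (so that semistability of $T$ may legitimately be invoked), the bookkeeping of the zero-object cases, and, for the dual statement about $\FF$, the use of the fact that $\FF$ is closed under subobjects in $\AA$ in place of $\TT$ being closed under quotients.
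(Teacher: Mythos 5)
Your proof is correct and uses essentially the same mechanism as the paper's: decompose the subobject $A\subset T$ via the torsion pair into $A'\in\TT$ and $A''\in\FF$, observe that $A'$ is a strict subobject of $T$ inside $\TT$ (because $\TT$ is closed under quotients in $\AA$), and then combine semistability of $T$ in $\TT$, the phase hypothesis $\phi(\TT)>\phi(\FF)$, and the convexity of the image region of the stability function to conclude $\phi(A)\le\phi(A')\le\phi(T)$. The only cosmetic difference is that the paper phrases this as a proof by contradiction (assume a destabilizing $A$, extract its torsion part $T(A)$ as a destabilizer in $\TT$), whereas you argue directly; the underlying ideas are identical, and you are if anything a bit more explicit about the degenerate cases $A'=0$ and $A''=0$.
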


\begin{proof}
We consider only the case of a $Z|_{\TT}$-semistable object $T\in\TT$ (the second
case is similar). Suppose $T$ is not $Z$-semistable as an object of $\AA$. Then there exists
a subobject $A\sub T$ such that $\phi(A)>\phi(T)$. Consider the canonical exact sequence
\begin{equation}\label{torsion-ex-seq}
0\to T(A)\to A\to F(A)\to 0
\end{equation}
with $T(A)\in\TT$, $F(A)\in\FF$. By the assumption either $\phi(T(A))> \phi(F(A))$ or one of
the objects $T(A)$, $F(A)$ is zero. Note that $T(A)\neq 0$, since otherwise $A$ would be an object
of $F$, so the inequality $\phi(A)>\phi(T)$ would be impossible.
It follows that $\phi(T(A))\ge\phi(A)>\phi(T)$. Thus, we found a destabilizing subobject 
$T(A)\sub T$ (the quotient is automatically in $\TT$ since $\TT$ is always closed under quotients).
\end{proof}

\begin{prop}\label{Z-torsion-prop}
Keep the assumptions of Lemma \ref{Z-torsion-lem}. Assume that 
both $(\TT, Z|_{\TT})$ and $(\FF,Z|_{\FF})$ satisfy chain conditions (1) and (2) from
Proposition \ref{HN-prop}. Then $Z$ has the Harder-Narasimhan property on $\AA$.
\end{prop}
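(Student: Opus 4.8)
The plan is to verify the hypotheses of Proposition \ref{HN-prop} for the abelian category $\AA$ with the stability function $Z$. Since $\AA$ is in particular quasi-abelian and $Z$ is a stability function, the only things to check are (i) the vanishing $\Hom_\AA(E,F)=0$ for $Z$-semistable $E,F$ with $\phi(E)>\phi(F)$, and (ii) the two chain conditions for $\AA$.

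First I would address the $\Hom$-vanishing. Take $Z$-semistable objects $E,F\in\AA$ with $\phi(E)>\phi(F)$ and a nonzero morphism $f\colon E\to F$. Decompose $E$ and $F$ using the torsion pair: there are exact sequences $0\to T(E)\to E\to F(E)\to 0$ and $0\to T(F)\to F\to F(F)\to 0$ with $T(\cdot)\in\TT$, $F(\cdot)\in\FF$. Because $\Hom_\AA(\TT,\FF)=0$, the composite $T(E)\to E\to F\to F(F)$ vanishes, so $f$ restricted to $T(E)$ lands in $T(F)$, and $f$ induces a map $F(E)\to F(F)$. Thus $f$ is nonzero on at least one of the two "diagonal" pieces $T(E)\to T(F)$ or $F(E)\to F(F)$. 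In the first case, $T(E)\ne 0$ is a nonzero subobject of the $Z$-semistable object $E$, hence $\phi(T(E))\le\phi(E)$; but by the torsion-pair slope hypothesis $\phi(T(E))>\phi(F(\cdot))$ so in fact $T(E)$, being the maximal torsion subobject, satisfies $\phi(T(E))\ge\phi(E)$, giving $\phi(T(E))=\phi(E)$, and similarly the image of $T(E)$ in $T(F)$ forces $\phi(T(F))=\phi(F)$ by semistability of $F$; then a nonzero map $T(E)\to T(F)$ between $Z|_\TT$-semistable-enough objects with $\phi(T(E))>\phi(T(F))$ is ruled out by running the Harder--Narasimhan decomposition inside the quasi-abelian category $(\TT, Z|_\TT)$, which we know satisfies the chain conditions and hence (by Proposition \ref{HN-prop} applied to $\TT$) the HN property, so $\Hom$-vanishing between semistable objects of decreasing phase holds there. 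The case $F(E)\to F(F)$ is symmetric, using $(\FF,Z|_\FF)$. I would also use Lemma \ref{Z-torsion-lem} here to relate semistability in $\TT$ (resp.\ $\FF$) to semistability in $\AA$ where needed.

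Next, the chain conditions. For condition (1) of Proposition \ref{HN-prop}, suppose $\cdots\subset E_{j+1}\subset E_j\subset\cdots\subset E_1$ in $\AA$ with strictly increasing phases. Applying the torsion functor gives subobjects $T(E_j)\subset T(E_{j+1})$ inside the nested system (torsion subobject is functorial and monotone under subobjects), and quotients $F(E_j)\surj F(E_{j+1})$. The key point is that $[E_j]=[T(E_j)]+[F(E_j)]$, so $Z(E_j)=Z(T(E_j))+Z(F(E_j))$ with $Z(T(E_j))$ in the half-cone of phases $>\phi(F(\cdot))$ and $Z(F(E_j))$ in the complementary half-cone; monotonicity of the phase of $E_j$ then translates, via an elementary planar argument about sums of vectors in two fixed cones, into either $\Im Z$ being eventually strictly decreasing on the $\FF$-parts (producing an infinite strictly decreasing chain of non-negative integers if $\Im Z$ is integer-valued — but in general one argues directly) or a strictly monotone phase chain in one of $\TT$, $\FF$, contradicting the chain conditions there. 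I would make this precise by the same device used throughout section \ref{reason-sec}: project onto a suitable real line to get a strictly monotone sequence of reals forced to stabilize. Condition (2) is handled symmetrically with strict epimorphisms, quotient torsion pieces, and the chain conditions for $\FF$ on the torsion-free side. Once (i) and (ii) are established, Proposition \ref{HN-prop} delivers the Harder--Narasimhan property for $Z$ on $\AA$.

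The main obstacle I anticipate is the chain-condition step: a strictly $\phi$-monotone chain in $\AA$ need not restrict to a strictly monotone chain in either $\TT$ or $\FF$ — the phase can "migrate" between the torsion and torsion-free parts along the chain. The right way to handle this is to observe that along an infinite such chain, after passing to a cofinal subsequence one of the two parts $T(E_j)$ or $F(E_j)$ must itself carry an infinite strictly monotone phase chain (because the $Z$-values of the two parts cannot both stabilize while the total phase keeps moving strictly in one direction within a bounded angular sector), and then invoke the corresponding chain condition among the hypotheses. I would carry this out by a careful case analysis on whether $\Im Z(F(E_j))$ (equivalently the "size" of the torsion-free part) is eventually constant, bounded, or unbounded, reducing each case to chain condition (1) or (2) for $\TT$ or $\FF$.
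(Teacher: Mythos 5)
Your proposal takes a genuinely different route from the paper, and the route has a real gap. You try to verify the hypotheses of Proposition~\ref{HN-prop} for $\AA$ itself (Hom-vanishing plus the two chain conditions for $\AA$) and then invoke that proposition once. The paper never invokes Proposition~\ref{HN-prop} for $\AA$ at all: it applies Lemma~\ref{Z-torsion-lem} to see that $Z|_\TT$-semistable (resp.\ $Z|_\FF$-semistable) objects are $Z$-semistable in $\AA$, which gives the Hom-vanishing hypothesis inside $\TT$ and inside $\FF$; combined with the chain conditions this yields the HN property separately for $(\TT,Z|_\TT)$ and $(\FF,Z|_\FF)$ via Proposition~\ref{HN-prop}. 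Then for an arbitrary $E\in\AA$ one simply concatenates the HN filtration of $T(E)$ with that of $F(E)$ along the exact sequence~\eqref{torsion-ex-seq}; all $\TT$-factors have phase strictly greater than all $\FF$-factors by the torsion-pair slope hypothesis, and Lemma~\ref{Z-torsion-lem} guarantees each factor is $Z$-semistable in $\AA$, so the concatenation is an HN filtration of $E$. No chain condition on $\AA$ is ever needed.

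The gap in your approach is exactly the one you flag yourself and then leave unresolved: an infinite $\phi$-monotone chain in $\AA$ need not produce a $\phi$-monotone chain in $\TT$ or in $\FF$. Your sketch (``pass to a cofinal subsequence'' / ``case analysis on whether $\Im Z(F(E_j))$ is eventually constant, bounded, or unbounded'') does not close it. Concretely, in a chain $\cdots\subset E_{j+1}\subset E_j$ with $\phi(E_{j+1})>\phi(E_j)$, the torsion parts $T(E_j)$ could stabilize while the phases $\phi(F(E_j))$ oscillate or stay constant even though $|Z(F(E_j))|$ shrinks; the total phase $\phi(E_j)$ can still increase strictly because the vector $Z(E_j)$ drifts toward the fixed $Z(T(E_j))$ direction without $\phi(F(E_j))$ ever increasing. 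Chain condition~(1) for $\FF$ forbids chains with strictly increasing phase, so it gives you nothing here. I do not see how to rule out this behavior using only the hypotheses at hand, which is precisely why the paper sidesteps the issue by constructing HN filtrations directly rather than deducing them from chain conditions on $\AA$.

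A secondary remark: the Hom-vanishing step you labor over is automatic in any abelian category with a stability function (take the image of a map between semistables of decreasing phase and compare slopes), so that part, while not wrong, is superfluous. The effort spent there would be better redirected at the chain-condition step, which is where the argument actually breaks down.
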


\begin{proof}
Suppose we have a pair of $Z|_{\TT}$-semistable objects $E,F\in\TT$ such that $\phi(E)>\phi(F)$.
Then by Lemma \eqref{Z-torsion-lem}, $E$ and $F$ are still semistable viewed as objects of
the {\it abelian} category $\AA$ with the stability function $Z$. Hence, $\Hom(E,F)=0$.
Therefore, by Proposition \ref{HN-prop}, the Harder-Narasimhan property holds for $(\TT, Z|_{\TT})$.
The same argument works for $(\FF, Z|_{\FF})$. Now given an object $E\in\AA$ we can sew
together the HN-filtrations of the objects $T(A)$ and $F(A)$ from the canonical exact sequence
\eqref{torsion-ex-seq}. It remains to apply Lemma \ref{Z-torsion-lem} again to see that we get
a HN-filtration of $E$ in this way.
\end{proof}

The following Lemma is a more precise version of Proposition 5.0.1 of \cite{AP}.

\begin{lem}\label{Noeth-lem} 
(a) Let $Z$ be a stability function on an abelian category $\AA$. Assume that $0$
is an isolated point of $\Im Z(\AA)\sub\R_{\geq 0}$, and that the category
$\AA_0=\{A\in\AA\ |\ \Im Z(A)=0\}$ is Noetherian. Then $Z$
satisfies the Harder-Narasimhan property on $\AA$ if and only if $\AA$ is Noetherian.

\noindent
(b) Let $\sigma=(Z,P)$ be a stability condition on $\DD$ with Noetherian heart
$P(0,1]$. Assume that $0$ is an isolated point of $\Im Z(P(0,1))\sub\R_{\ge 0}$.
Then the category $P(0,1)$ is of finite length. Also, $\sigma$ is reasonable if and only if
$0$ is an isolated point of $Z(P(1))\sub\R_{\le 0}$. 
\end{lem}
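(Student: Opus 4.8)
The plan is to deduce part (b) from part (a), which I would take as already established (it is essentially Proposition 5.0.1 of \cite{AP}, made precise). The key observation is that the heart of the shifted $t$-structure whose truncation cuts out $P(0,1)$ is exactly the category $P(0,1)$ itself — more precisely, $P(0,1) = P[\eps, 1-\eps]$-type considerations show that $P(0,1)$ is the heart of the $t$-structure on the triangulated subcategory $\langle P(0,1)\rangle$, but it is cleaner to work directly inside the abelian category $H = P(0,1]$. Note that $(P(1), P(0,1))$ is a torsion pair in $H$: every $E\in H$ sits in a canonical sequence with torsion part in $P(1)$ and torsion-free part in $P(0,1)$, since $P(1)$ consists of the semistable objects of phase $1$, which are exactly those with $Z(E)\in\R_{<0}$, and these form a torsion class closed under quotients in $H$.

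First I would prove that $P(0,1)$ is Noetherian. By hypothesis $H = P(0,1]$ is Noetherian. Since $P(0,1)$ is closed under subobjects in $H$ (being the torsion-free part of a torsion pair, it is closed under subobjects) — wait, one must be careful: the torsion-free class $\FF$ of a torsion pair is closed under \emph{subobjects}, so any ascending chain in $P(0,1)$ is an ascending chain in $H$, hence stabilizes. Thus $P(0,1)$ is Noetherian. Next, apply part (a) to the abelian category $\AA = P(0,1)$ with the stability function $Z|_{P(0,1)}$: the subcategory $\AA_0 = \{A \in P(0,1) : \Im Z(A) = 0\}$ is zero, because any nonzero object of $P(0,1)$ with $\Im Z = 0$ would have $Z\in\R_{<0}$ hence phase $1$, contradicting membership in $P(0,1)$; and by hypothesis $0$ is an isolated point of $\Im Z(P(0,1))$. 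The zero category is trivially Noetherian, and by restriction the stability condition $\sigma$ furnishes HN-filtrations for objects of $P(0,1)$ (every object of $P(0,1)$ already has a Bridgeland HN-filtration with factors in $P(\phi)$, $\phi\in(0,1)$, and these factors lie in $P(0,1)$). Hence by part (a), $P(0,1)$ is Noetherian \emph{and} finite length: indeed, once $\AA_0 = 0$ and $0$ is isolated in $\Im Z$, an Artinian-type argument on $\Im Z$ (which takes values in a discrete-near-$0$ subset of $\R_{\ge 0}$ and is additive) gives that descending chains also stabilize, since $\Im Z$ is strictly monotone along a proper chain unless it is eventually constant, and constancy forces the chain into $\AA_0 = 0$. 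So $P(0,1)$ has finite length.

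Finally I would prove the reasonableness criterion. By Lemma \ref{reason-lem}(2), $\sigma$ is reasonable if and only if for every $t$ and $0<\eta<1$ the point $0$ is isolated in $Z(P(t,t+\eta))$; by the argument in the proof of Lemma \ref{reason-lem}(2) it suffices to check this for $P(0,1)$ and for one other interval crossing phase $1$ — but using the rotation invariance and the fact that $P(1)[{-1}] = P(0)$, it reduces to: $0$ is isolated in $Z(P(0,1))$ (given) together with $0$ isolated in $Z(P(1))$. The forward implication is Lemma \ref{reason-lem}(1) applied to the semistable objects of phase $1$. For the converse, suppose $0$ is isolated in both $Z(P(0,1))$ and $Z(P(1)) \subset\R_{\le 0}$; given a semistable $E$ of phase $\phi\in(0,1]$, either $\phi\in(0,1)$ and $|Z(E)|$ is bounded below by the isolation hypothesis on $P(0,1)$, or $\phi = 1$ and $|Z(E)|$ is bounded below by the isolation hypothesis on $P(1)$; combining the two bounds with the cone estimate of Lemma \ref{reason-lem}(1) handles all intervals and hence all phases. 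The main obstacle I anticipate is the bookkeeping verifying that $(P(1), P(0,1))$ is genuinely a torsion pair in $H$ and that $P(0,1)$ inherits finite length — the rest is a direct invocation of part (a) and of Lemma \ref{reason-lem}.
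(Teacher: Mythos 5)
Your proof follows essentially the paper's route: Noetherianness of $P(0,1)$ (ascending chains) from subobject-closure inside the Noetherian heart $H=P(0,1]$, the Artinian property (descending chains) from the fact that $\Im Z$ drops by a uniform positive amount across each proper strict inclusion, and reasonableness from a uniform lower bound on $|Z|$ over $P(0,1)\setminus 0$ combined with the $P(1)$ bound. Two wobbles are worth flagging. First, the step "apply part (a) to the abelian category $\AA=P(0,1)$" is not valid as stated: $P(0,1)$ is only quasi-abelian (the torsion-free class of the torsion pair $(P(1),P(0,1))$), so part (a), whose hypotheses require an abelian category, does not apply to it directly; moreover, part (a) only equates the HN property with Noetherianness, not with finite length, so even on an abelian category it would not by itself give the conclusion you want. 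This is ultimately harmless because you have already proved Noetherianness by subobject-closure and you supply the Artinian argument by hand — but it should be presented as a direct argument (as the paper does), not as an invocation of (a). Second, the detour through Lemma \ref{reason-lem}(2) is unnecessary for the reasonableness criterion: once $|Z(E)|\ge\Im Z(E)\ge c>0$ for nonzero $E\in P(0,1)$ and one also has a positive lower bound on $P(1)$, the infimum over all nonzero semistable objects is positive by the definition of reasonable (shift-invariance $|Z(E[1])|=|Z(E)|$ covers the other phases), which is exactly the paper's one-line observation. Neither issue affects the correctness of the conclusion.
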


\begin{proof} (a) Assume first that $\AA$ is Noetherian. Then condition (2) of
Proposition \ref{HN-prop} is automatic. To check condition (1)
we observe that
if $E \rightarrow F$ is a destabilizing inclusion in $\AA$ then $\Im Z(E) < \Im Z(F)$. 
Indeed, we have either $\Im Z(F/E) > 0$ or $\Re Z(F/E) < 0$. But in the latter case
the phase of $Z(E)$ would be smaller than that of $Z(F)$. 
Thus, if we have a chain 
\begin{equation}\label{incl-seq}
\cdots \subset E_{j+1} \subset E_{j} \subset \cdots \subset E_{2} \subset E_{1}
\end{equation}
of destabilizing inclusions in $\AA$ then the sequence $(\Im Z(E_j))$ is strictly decreasing. But this
implies that $\Im Z(E_j/E_{j+1})$ tends to $0$ which is a contradiction.
Conversely, assume $Z$ satisfies the Harder-Narasimhan property.
To check that $\AA$ is Noetherian we have to check that every
sequences of quotients in $\AA$
\begin{equation}\label{surj-seq}
E_1\surj E_2\surj E_3\surj\cdots
\end{equation}
stabilizes. Note that in this situation the sequence $(\Im Z(E_i))$ is decreasing, so it has
to stabilize. Without loss of generality we can assume that the sequence $(\Im Z(E_i))$ is
constant. Then the kernel $K_i$ of $E_1\to E_i$ belongs to $\AA_0$. Since $Z$ satisfies the Harder-Narasimhan property, there exists a maximal subobject $F\sub E_1$ such that $F\in\AA_0$. Then the kernels
$K_i$ form an increasing chain of subobjects in $F$. Since $\AA_0$ is Noetherian, this
sequence stabilizes, so the original sequence $(E_i)$ also stabilizes.
It remains to check that in this situation $\AA_{>0}$ is Artinian. But a sequence of
inclusions \eqref{incl-seq} with $\Im Z(E_j/E_{j+1})>0$ is impossible since $\Im Z(E_j/E_{j+1})$
would tend to zero.

\noindent
(b) To see that $P(0,1)$ is of finite length we observe that any increasing
chain of admissible inclusions in $P(0,1)$ stabilizes since $\AA=P(0,1]$ is Noetherian.
Also, if we have a chain \eqref{incl-seq} of admissible proper inclusions in $P(0,1)$ then
the sequence $\Im Z(E_j)$ is strictly decreasing, which is impossible. Under our assumptions
$|Z(E)|$ is bounded below by some positive constant, where $E$ runs through nonzero
semistable objects in $P(0,1)$. Thus, $\sigma$ is reasonable if and only if 
$$\inf_{E\in P(1)\setminus 0} |Z(E)|>0.$$
\end{proof}

\begin{prop}
\label{GluingCondition}
Let $(\DD_1, \DD_2)$ be a semiorthogonal decomposition of a triangulated category $\DD$, 
and let $\sigma_1=(Z_1,H_1)$ and $\sigma_2=(Z_2,H_2)$ be a pair of locally finite stability conditions
on $\DD_1$ and $\DD_2$, respectively.  
Assume that $\Hom_{\DD}^{\le 0}(H_1, H_2) = 0$, and let $H$ be the heart in $\DD$ glued from $H_1$ and $H_2$. As before, consider the stability function $Z = Z_1 \lambda_1 + Z_2 \rho_2$ on $H$. 
Assume in addition that one of the following two conditions hold:

\noindent
(a) $0$ is an isolated point of $\Im Z_i(H_i)\sub\R_{\geq 0}$ for $i=1,2$;

\noindent
(b) $\Hom_\DD^{\le 1}(H_1,P_2(0,1))=0$.

Then $Z$ has the Harder-Narasimhan property on $H$.
Furthermore, in case (a) 
the category $P(0,1)$ for the glued stability condition $\sigma=(Z,P)$ is of finite length.
In case (b) the stability condition $\sigma$ is locally finite.
\end{prop}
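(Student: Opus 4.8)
The plan is to treat the two cases separately. In both I use that $(H_2,H_1)$ is a torsion pair in the glued heart $H$ (see \eqref{H-for2}), so that $\rho_2\colon H\to H_2$ and $\lambda_1\colon H\to H_1$ are exact and every $E\in H$ fits in the exact sequence \eqref{H1-H2-seq}, together with the semiorthogonality $\Hom_\DD(\DD_2,\DD_1)=0$. I also record the elementary observation that for $i=1,2$ one has $(H_i)_0:=\{A\in H_i\mid\Im Z_i(A)=0\}=P_i(1)$: an object of $H_i=P_i(0,1]$ with vanishing imaginary part has all its $\sigma_i$-HN factors of phase in $(0,1]$ and of imaginary part $0$, hence of phase $1$, and $P_i(1)$ is closed under extensions; moreover $P_i(1)$ is Noetherian because $\sigma_i$ is locally finite.

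\emph{Case (a).} Since $0$ is isolated in $\Im Z_i(H_i)$ and $Z_i$ has the Harder--Narasimhan property (being a stability condition), Lemma~\ref{Noeth-lem}(a) applied to $(H_i,Z_i)$ — whose subcategory $(H_i)_0=P_i(1)$ is Noetherian — shows $H_i$ is Noetherian for $i=1,2$. Given an ascending chain in $H$, the exact functors $\rho_2,\lambda_1$ produce ascending chains in $H_2,H_1$, which stabilize; as \eqref{H1-H2-seq} is functorial, the five lemma then forces the original chain to stabilize, so $H$ is Noetherian. Next, $\Im Z(E)=\Im Z_1(\lambda_1(E))+\Im Z_2(\rho_2(E))$ is a sum of two nonnegative numbers each lying in $\{0\}\cup[\delta,\infty)$ for a common $\delta>0$, so $0$ is isolated in $\Im Z(H)$; and $H_0=\{E\in H\mid\Im Z(E)=0\}$ is closed under subobjects in $H$ (if $A\subseteq E$ and $\Im Z(E)=0$ then $\Im Z(A)=\Im Z(E/A)=0$), hence Noetherian. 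Lemma~\ref{Noeth-lem}(a) applied now to $(H,Z)$ yields the Harder--Narasimhan property for $Z$ on $H$. Finally $\sigma=(Z,P)$ is a stability condition with Noetherian heart $P(0,1]=H$ and $0$ isolated in $\Im Z(P(0,1))\subseteq\Im Z(H)$, so Lemma~\ref{Noeth-lem}(b) gives that $P(0,1)$ is of finite length.

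\emph{Case (b).} Here I construct the slicing directly. On $H_2$ one has the torsion pair $(P_2(1),P_2(0,1))$, and likewise $(P_1(1),P_1(0,1))$ on $H_1$; for $E\in H$ write $\rho_2(E)$ as an extension of $G\in P_2(0,1)$ by $T\in P_2(1)$, and $\lambda_1(E)$ as an extension of $F\in P_1(0,1)$ by $L\in P_1(1)$. Quotienting \eqref{H1-H2-seq} by the subobject $T\subseteq\rho_2(E)\subseteq E$ gives $0\to G\to E/T\to\lambda_1(E)\to 0$, which splits since $\Ext^1(\lambda_1(E),G)=\Hom^1_\DD(\lambda_1(E),G)=0$ by hypothesis (b) (here $G\in P_2(0,1)$). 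Thus $E/T\cong G\oplus\lambda_1(E)$; letting $E''\subseteq E$ be the preimage of $L\subseteq\lambda_1(E)\subseteq E/T$, the object $E''$ is an extension of $L\in P_1(1)$ by $T\in P_2(1)$, hence lies in $[P_1(1),P_2(1)]$ and is $Z$-semistable of phase $1$, while $E/E''\cong G\oplus F$. One checks readily that for $\phi\in(0,1)$ every object of $P_2(\phi)\oplus P_1(\phi)$ is $Z$-semistable of phase $\phi$ (apply $\rho_2,\lambda_1$ to a subobject, and use that the phase of a sum of two nonzero vectors of the upper half plane lies strictly between the phases of the summands). Hence the $\sigma_2$-HN filtration of $G$ and the $\sigma_1$-HN filtration of $F$ merge, by phase, into a Harder--Narasimhan filtration of $G\oplus F$ with factors in the categories $P_2(\phi)\oplus P_1(\phi)$, $\phi\in(0,1)$, of strictly decreasing phase; prepending $E''$ produces the Harder--Narasimhan filtration of $E$. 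This establishes the Harder--Narasimhan property, and exhibits the glued stability condition as having slicing $P(\phi)=[P_1(\phi),P_2(\phi)]$ (extended by $P(\phi+1)=P(\phi)[1]$), with heart $P(0,1]=H$ and with the vanishing $\Hom(P(\phi_1),P(\phi_2))=0$ for $\phi_1>\phi_2$ immediate from semiorthogonality and \eqref{Hom-H1-H2}. For local finiteness of $\sigma$, pick $\eta>0$ smaller than $1/2$ and than the constants witnessing local finiteness of $\sigma_1$ and $\sigma_2$: then $P(\phi-\eta,\phi+\eta)$ is, up to shift, generated under extensions by the finite-length categories $P_1(\phi-\eta,\phi+\eta)$ and $P_2(\phi-\eta,\phi+\eta)$ with no morphisms from objects of the latter to objects of the former, and the $\rho_2/\lambda_1$ argument of case (a) shows it is of finite length.

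The step I expect to be the main obstacle is the reordering in case (b): the $\sigma_1$- and $\sigma_2$-HN factors of $\lambda_1(E)$ and $\rho_2(E)$ can interleave in phase, whereas the two-term sequence \eqref{H1-H2-seq} forces all of $\rho_2(E)$ to sit below all of $\lambda_1(E)$ inside $E$. Hypothesis (b) is exactly what is needed to peel off the phase-$<1$ part $G$ of $\rho_2(E)$ as a direct summand of $E/T$, after which the remaining data is an honest direct sum and needs no rearrangement; turning the two-term sequence into a genuine multi-step Harder--Narasimhan filtration is the place where some extra hypothesis (here (b), or, for the exceptional collections considered by Macr\`i, the vanishing of $\Ext^1$ for degree reasons) is unavoidable. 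In case (a) this difficulty is instead absorbed by Lemma~\ref{Noeth-lem}, so there the real content is propagating Noetherianity along the torsion pair and verifying the two isolatedness statements.
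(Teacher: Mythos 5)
Your proof is correct and follows essentially the same route as the paper: in case (a) you apply Lemma~\ref{Noeth-lem}(a) first to $H_1,H_2$ (using local finiteness to get $(H_i)_0=P_i(1)$ Noetherian) and then to $H$ via the exact functors $\lambda_1,\rho_2$; in case (b) you use $\Hom^1(H_1,P_2(0,1))=0$ to split off the phase-$1$ piece as a subobject $E''$ with $E/E''\cong G\oplus F$ and then merge the HN filtrations of $G$ and $F$, which is precisely the paper's construction of $E(1)$ and the direct-sum quotient. You supply a few checks the paper leaves implicit (Noetherianity of $H_0$, semistability of objects of $P(\phi)$, the Hom-vanishing for the slicing), but the underlying argument is identical.
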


\begin{proof}
First, assume that (a) holds. Then it is easy to see that 
$0$ is an isolated point of $\Im Z(H)\sub \R_{\geq 0}$.
Also, by Lemma \ref{Noeth-lem}(a), both categories $H_1$ and $H_2$ are Noetherian (the condition
on $\AA_0$ in this Lemma follows from the assumption that $\sigma_i$'s are locally finite).
Using the exact functors $\lambda_1:H\to H_1$ and $\rho_2:H\to H_2$ we easily deduce that
$H$ is Noetherian. Now the assertion follows by applying Lemma \ref{Noeth-lem}(a) again.

\noindent
(b) In this case for every $t\in (0,1]$ let us define the subcategory $P(t)\sub H$ by
$$P(t):=\{E\in H\ |\ \lambda_1(E)\in P_1(t), \rho_2(E)\in P_2(t)\}.$$ 
Note that each object of $P(t)$ is an extension of an object in $P_2(t)$ by an
object in $P_1(t)$. It is enough for every $E\in H$ to construct the HN-filtration with respect
to this slicing. We start with the canonical extension
$$0\to E_2\to E\to E_1\to 0$$
where $E_2=\rho_2(E)\in H_2$ and $E_1=\lambda_1(E)\in H_1$. Consider also the
canonical exact sequences
$$0\to A_i\to E_i\to B_i\to 0$$
with $A_i\in P_i(1)$ and $B_i\in P_i(0,1)$ for $i=1,2$.
Since $\Hom^1(E_1,B_2)=0$ by assumption, we get a splitting $E\to B_2$ which
gives rise to an exact sequence
$$0\to A_2\to E\to B_2\oplus E_1\to 0$$
Let $E(1)\sub E$ be the preimage of $A_1\sub E_1\sub B_2\oplus E_1$.
Then $E(1)$ is an extension of $A_1$ by $A_2$, so $E(1)\in P(1)$.
Also, $E/E(1)\simeq B_1\oplus B_2$, so we get the required filtration by using
the HN-filtrations on $B_1$ and $B_2$. 
The obtained glued stability has the property that $\lambda_1(P(a,b))\sub P_1(a,b)$
and $\rho_2(P(a,b))\sub P_2(a,b)$. This easily implies that it is locally finite.
\end{proof}

\begin{remark} We do not know how to check local finiteness of the glued stability condition in
Proposition \ref{GluingCondition}(a) without imposing additional assumptions.
\end{remark}

If we work with reasonable stability conditions, we can prove the existence of the glued
stability conditions under a slightly stronger orthogonality assumption.

\begin{thm}\label{reason-glue-thm}
Let $(\DD_{1}, \DD_{2})$ be a semiorthogonal decomposition of a triangulated category $\DD$.
Suppose $(\sigma_{1}, \sigma_{2})$ is a pair of reasonable 
stability conditions on $\DD_1$ and $\DD_2$, respectively, with the slicings $P_i$ and
central charges $Z_i$ ($i=1,2$), and let $a$ be a real number in $(0,1)$. Assume the following
two conditions hold:
\begin{enumerate}
\item $\Hom_\DD^{\leq 0}(P_1(0,1],P_2(0,1])=0$;
\item $\Hom_{\DD}^{\leq 0}(P_{1}(a,a+1], P_{2}(a,a+1]) = 0$;
\end{enumerate}
Then there exists a stability $\sigma$ glued from $\sigma_1$ and $\sigma_2$.
Furthermore, $\sigma$ is reasonable.
\end{thm}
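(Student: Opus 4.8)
The plan is to exhibit the desired $\sigma=(Z,P)$ through its heart and slicing: condition (1) produces the heart, and the Harder--Narasimhan property for the resulting central charge is reduced to Proposition~\ref{Z-torsion-prop} applied to a torsion pair on that heart adapted to the rotation parameter $a$.

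Since condition (1) says precisely that $\Hom^{\le 0}_\DD(H_1,H_2)=0$ for the hearts $H_i=P_i(0,1]$, the gluing Lemma of Section~2 provides a heart $H=[H_2,H_1]$ of a bounded nondegenerate $t$-structure on $\DD$, together with the stability function $Z=Z_1\lambda_1+Z_2\rho_2$ on $H$ of \eqref{Z-for}. By the heart-theoretic description of stability conditions recalled in Section~\ref{reason-sec} it then suffices to verify the Harder--Narasimhan property for $Z$ on $H$; the resulting stability condition will be glued from $\sigma_1$ and $\sigma_2$ by construction.

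Next I would introduce $\TT$, the extension-closed subcategory of $H$ generated by $P_2(a,1]$ and $P_1(a,1]$, and $\FF$, the one generated by $P_2(0,a]$ and $P_1(0,a]$, and prove that $(\TT,\FF)$ is a torsion pair on $H$. The vanishing $\Hom_H(\TT,\FF)=0$ is immediate from the semiorthogonality $\Hom(\DD_2,\DD_1)=0$, from the slicing axioms inside $\DD_1$ and $\DD_2$, and from condition (1) (which kills $\Hom(P_1(a,1],P_2(0,a])$). For the torsion decomposition of an arbitrary $E\in H$: combining the canonical sequence $0\to\rho_2(E)\to E\to\lambda_1(E)\to 0$ with the torsion pairs $(P_i(a,1],P_i(0,a])$ inside the $H_i$, one obtains a length-four filtration of $E$ whose successive quotients lie, from sub to quotient, in $P_2(a,1]$, $P_2(0,a]$, $P_1(a,1]$, $P_1(0,a]$; to split off a subobject in $\TT$ with quotient in $\FF$ one must interchange the middle two steps, and the obstruction to this is the extension class in $\Ext^1_H\big(P_1(a,1],P_2(0,a]\big)=\Hom_\DD\big(P_1(a,1],P_2(0,a][1]\big)=\Hom_\DD\big(P_1(a,1],P_2(1,1+a]\big)$. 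This group vanishes by condition (2), because $0<a<1$ forces $P_1(a,1]\sub P_1(a,a+1]$ and $P_2(1,1+a]\sub P_2(a,a+1]$. I expect this interchange step to be the crux of the proof: it is exactly where the second orthogonality hypothesis enters, and it substitutes for the phase compatibility that the naive torsion pair $(H_2,H_1)$ need not possess.

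It remains to check the hypotheses of Proposition~\ref{Z-torsion-prop} for $(\TT,\FF)$ and to record reasonableness. For nonzero $T\in\TT$ the value $Z(T)$ is a sum of nonzero vectors of phase in $(a,1]$, so $\phi(Z(T))\in(a,1]$; likewise $\phi(Z(F))\in(0,a]$ for nonzero $F\in\FF$; hence $\phi(T)>\phi(F)$. By Lemma~\ref{reason-lem}(1) (noting that $(a,1]$ and $(0,a]$ are contained in open intervals of width $<1$), reasonableness of $\sigma_1$ and $\sigma_2$ gives a uniform positive lower bound for $|Z_i|$ on $P_i(a,1]\setminus 0$ and on $P_i(0,a]\setminus 0$; letting $h$ denote the orthogonal projection of $\C$ onto the bisector of the relevant cone (whose angular width is $<\pi$), additivity of $Z$ then produces a constant $c>0$ with $h(Z(X))\ge c$ for every nonzero $X$ in $\TT$ and every nonzero $X$ in $\FF$. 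Since $h\circ Z$ is additive, this positivity makes both chain conditions of Proposition~\ref{HN-prop} automatic for $(\TT,Z|_\TT)$ and $(\FF,Z|_\FF)$: along any infinite sequence of strict monomorphisms or strict epimorphisms $h\circ Z$ would be strictly monotone with all increments $\ge c$, contradicting its boundedness. Proposition~\ref{Z-torsion-prop} then yields the Harder--Narasimhan property, so $\sigma=(Z,P)$ with $P(0,1]=H$ exists and is glued from $\sigma_1,\sigma_2$. Finally, every $\sigma$-semistable object is, after a shift, an object of $H$; if its torsion decomposition $0\to T(E)\to E\to F(E)\to 0$ had both terms nonzero then $\phi(Z(T(E)))\in(a,1]$ and $\phi(Z(F(E)))\in(0,a]$ would force $\phi(Z(E))<\phi(Z(T(E)))$, contradicting semistability; hence $E\in\TT$ or $E\in\FF$, so $|Z(E)|\ge c$ and $\sigma$ is reasonable.
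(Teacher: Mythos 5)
Your proposal is correct and follows essentially the same strategy as the paper: glue the heart using condition (1), then use condition (2) to produce the torsion pair $(\TT,\FF)=(P(a,1],P(0,a])$ on $H$ adapted to the rotation parameter $a$, and finally invoke Proposition~\ref{Z-torsion-prop}. The one cosmetic difference is in how the torsion decomposition of an arbitrary $E\in H$ is produced: the paper first builds an auxiliary heart $H_a=\langle P_1(a,a+1],P_2(a,a+1]\rangle$ and uses the $t$-decomposition with respect to $H_a$, whereas you construct the four-term filtration from the canonical sequence $0\to\rho_2(E)\to E\to\lambda_1(E)\to 0$ and the torsion pairs inside each $H_i$, and then split the middle two steps using the vanishing $\Ext^1_H(P_1(a,1],P_2(0,a])=\Hom_\DD(P_1(a,1],P_2(1,1+a])=0$ forced by condition (2). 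This is the same mathematical content, just unpacked directly; your identification of exactly where hypothesis (2) enters is accurate. Your boundedness argument for the chain conditions (projection onto the bisector of the cone, giving a uniform lower bound on $h\circ Z$) is a re-derivation of Lemma~\ref{reason-lem}(3), which the paper cites to conclude finite length of $P_i(0,a]$ and $P_i(a,1]$; both routes yield the hypotheses of Proposition~\ref{Z-torsion-prop} and the reasonableness of the glued $\sigma$.
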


\noindent
{\it Proof of Theorem \ref{reason-glue-thm}.}
Let $H\sub\DD$ be the heart glued from $P_1(0,1]$ and $P_2(0,1]$ and let
$(\DD^{\le 0},\DD^{\ge 0})$ denote the corresponding $t$-structure.
Using the second condition we can construct a $t$-structure on $\DD$ with the
heart
$$H_a=\langle  P_1(a,a+1], P_2(a,a+1]\rangle.$$
One immediately checks that $H\sub \lan H_a, H_a[-1]\ran$ and 
$H_a\sub\lan H[1], H\ran=\DD^{[-1,0]}$. Now for every $E\in H$ consider the
canonical triangle
$$A\to E\to B\to A[1]$$
with $A\in H_a$ and $B\in H_a[-1]$. We claim that $A$ and $B$ belong to $H$.
Indeed, we have $A\in H_a\sub\DD^{\le 0}$. On the other hand,
$A$ is an extension of $E$ by $B[-1]\in H_a[-2]$, so $A\in \DD^{\ge 0}$. Hence, $A\in H$.
Similarly, $B\in H_a[-1]\sub \DD^{\ge 0}$, and also $B\in\DD^{\leq 0}$ as an
extension of $A[1]\in H_a[1]$ by $E$.
Therefore, if we set
$$P(0,a]=\{E\in\DD\ |\ \lambda_1(E)\in P_1(0,a], \rho_2(E)\in P_2(0,a]\},$$
\begin{equation}\label{Pa1-eq}
P(a,1]=\{E\in\DD\ |\ \lambda_1(E)\in P_1(a,1], \rho_2(E)\in P_2(a,1]\},
\end{equation}
then $(P(a,1], P(0,a])$ is a torsion pair in $H$. 
Next, let $Z$ be the glued central charge given by \eqref{Z-for}.
Then we have $\phi(Z(E))\leq a$ for $E\in P(0,a]$, while $\phi(Z(E))>a$ for 
$E\in P(a,1]$.
Also, since $\sigma_1$ and $\sigma_2$ are reasonable, by Lemma \ref{reason-lem}(3),
the categories $P_1(0,a]$ and $P_2(0,a]$ (resp., $P_1(a,1]$ and $P_2(a,1]$)
are of finite length.  
This implies that both $P(0,a]$ and $P(a,1]$ are also of finite length. 
Therefore, we can apply Proposition \ref{Z-torsion-prop} to the torsion pair $(P(a,1], P(0,a])$
in $H$ to derive that the Harder-Narasimhan property holds for 
$(Z,H)$. Hence, we have the corresponding stability condition $\sigma$ on $\DD$. 
It follows from the definition of $P(0,a]$ and $P(a,1]$ that $0$ is an isolated point
of $Z(P(0,a])$ and of $Z(P(a,1])$. This immediately implies that $\sigma$ is reasonable.
\qed

\begin{remark}
It may not be easy in general to determine for a particular pair of stabilities $\sigma_{1}, \sigma_{2}$ 
with $\Hom_\DD^{\leq 0}(P_1(>0),P_2(\leq 1)=0$
whether there exists $a \in (0,1)$ such that \[\Hom_{\DD}^{\leq 0}(P_{1}(>a), P_{2}(\leq a+1) = 0.\] 
However, in the following two cases this is automatic.

\noindent 1.
If there exists $\phi > 0$ such that $P_{2}(0,\phi] = \{ 0\}$ then any $a \in (0,\phi]$ works,
since in this case $P_2(\leq a+1)=P_2(\leq 1)$. 
For instance, this condition is satisfied when $P_2(0,1]$ is of finite length and has finite number of simple objects. 

\noindent 2.
If there exists $\phi < 1$ such that $P_{1}(\phi,1] = \{ 0 \}$ then any $a \in (\phi,1] $ works, since
in this case $P_1(>-a)=P_1(>0)$. 
For example, this condition holds when $P_{1}(0,1]$ is of finite length with finite number of simple objects and $P_{1}(1) = \{ 0 \}$.
\end{remark}


\section{Continuity of gluing}

Let us recall the following basic result.

\begin{lem}\label{simple-distance-lem}
(Lemma 6.4 of \cite{Bridgeland06})
Suppose $\sigma = (Z,P)$ and $\tau = (Z,Q)$ are stability conditions on $\mathcal{D}$ with the same central charge $Z$. Suppose also that $d(P,Q) < 1$. Then $\sigma = \tau$.
\end{lem}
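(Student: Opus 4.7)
The goal is to establish $\sigma=\tau$, which amounts to $P(\phi)=Q(\phi)$ for every $\phi\in\R$; by symmetry of the hypotheses in $P$ and $Q$, it is enough to show $P(\phi)\sub Q(\phi)$. Fix $\eps$ with $d(P,Q)<\eps<1$, so that $P(\phi)\sub Q[\phi-\eps,\phi+\eps]$ for every $\phi$. Take a nonzero $E\in P(\phi)$ and consider its $\tau$-HN decomposition with semistable factors $A_1,\dots,A_n$ of $Q$-phases $\psi_1>\cdots>\psi_n$, all lying in $[\phi-\eps,\phi+\eps]$. The claim reduces to showing $n=1$ and $\psi_1=\phi$.

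The case $n=1$ is easy: then $E\in P(\phi)\cap Q(\psi_1)$, and since the two stabilities share the same central charge one has $Z(E)=|Z(E)|e^{i\pi\phi}=|Z(E)|e^{i\pi\psi_1}$, so $\psi_1-\phi\in 2\Z$, and the bound $|\psi_1-\phi|\le\eps<1$ then forces $\psi_1=\phi$.

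For $n\ge 2$ I will derive a contradiction by combining the central-charge identity with Hom-vanishing between $P$-semistable objects. The key geometric input---for which the hypothesis $\eps<1$ is essential---is as follows: since $A_j$ is $\tau$-semistable of phase $\psi_j$, the reverse distance bound gives $A_j\in P[\psi_j-\eps,\psi_j+\eps]$, and the half-interval $[\psi_j-\eps,\psi_j)$ has length $\eps<1$, hence corresponds to an open angular arc of width $<\pi$. If all $P$-HN factors of $A_j$ had phase strictly less than $\psi_j$, then $Z(A_j)$, being a positive sum of their central charges, would lie in that arc and have phase strictly less than $\psi_j$, contradicting $Z(A_j)=|Z(A_j)|e^{i\pi\psi_j}$. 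Hence $A_j$ has at least one $P$-HN factor of phase $\ge\psi_j$, and dually one of phase $\le\psi_j$.

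Applied to $A_1$ with the assumption $\psi_1>\phi$, the subobject $F\sub A_1$ in the $P$-HN filtration collecting the factors of phase $>\phi$ is nonzero (it contains a factor of phase $\ge\psi_1>\phi$), and all its $P$-HN factors have phase strictly greater than $\phi=\phi_P(E)$, so $\Hom(F,E)=0$ by the standard Hom-vanishing. The nonzero map $A_1\to E$ from the first step of the $\tau$-HN triangle then factors through $A_1/F$, whose $P$-HN factors lie in $[\psi_1-\eps,\phi]$. Iterating this reduction on $A_1/F$, combining with the parallel dual argument applied to $A_n$ and the map $E\to A_n$ under $\psi_n<\phi$, and feeding the resulting phase information back into $Z(E)=\sum_j Z(A_j)$, produces the required contradiction. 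The main obstacle I expect is controlling the iteration in the delicate range $\psi_1\in(\phi,\phi+\eps]$, where the raw cone argument is not itself strong enough to conclude; closing the loop will require using the ``top'' and ``bottom'' decompositions simultaneously.
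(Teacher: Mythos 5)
Your outline correctly handles the $n=1$ case and correctly extracts the ``cone'' observation: a $\tau$-semistable object $A_j$ of phase $\psi_j$ must have $\sigma$-HN factors with phase $\ge\psi_j$ and also ones with phase $\le\psi_j$, since otherwise $Z(A_j)$, a positive combination of vectors with arguments in a half-open arc of width $\pi\eps<\pi$ excluding $\pi\psi_j$, could not have argument $\pi\psi_j$. But the main case $n\ge 2$ is not actually proved: ``iterating this reduction\ldots produces the required contradiction'' is a hope, not an argument, and you acknowledge as much. The factoring you do perform ($A_1\to E$ factors through $A_1/F$ because $\Hom(F,E)=0$) does not terminate on its own: $A_1/F$ is a nonzero object with $\sigma$-HN phases in $[\psi_1-\eps,\phi]$ admitting a nonzero map to $E\in P(\phi)$, which is not by itself contradictory, and further iteration gains nothing.

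The missing step --- and the reason the full range $\eps<1$ (rather than, say, $\eps<1/2$) suffices --- is a Hom-vanishing computed in the slicing $Q$, not $P$. Assume $\psi_1>\phi$ and let $B$ complete the triangle $A_1\to E\to B\to A_1[1]$, so that $\phi^+_Q(B)<\psi_1$. Let $A^{(1)}\in P(\theta^+)$ be the \emph{top} $\sigma$-HN factor of $A_1$; your cone argument gives $\theta^+\ge\psi_1>\phi$, hence $\Hom(A^{(1)},E)=0$, and therefore the nonzero arrow $A^{(1)}\to A_1$ lifts along $B[-1]\to A_1$. Now compute in the other slicing: $A^{(1)}$ is $\sigma$-semistable of phase $\theta^+$, so $A^{(1)}\in Q[\theta^+-\eps,\theta^++\eps]$, giving $\phi^-_Q(A^{(1)})\ge\theta^+-\eps\ge\psi_1-\eps>\psi_1-1>\phi^+_Q(B[-1])$, which forces $\Hom(A^{(1)},B[-1])=0$ --- a contradiction. (Run for arbitrary $E$, the same argument shows $\phi^+_Q(E)\le\phi^+_P(E)$, and dually $\phi^-_Q(E)\ge\phi^-_P(E)$, which gives $P(\phi)\sub Q(\phi)$ at once.) Your proposal sets up only the $\sigma$-side vanishing $\Hom(F,E)=0$ and never invokes the $\tau$-side vanishing $\Hom(A^{(1)},B[-1])=0$; that is the step that actually closes the argument.
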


We start with the observation that the condition $d(P,Q)<1$ in the above Lemma can be weakened
and use this to give a nice criterion for determining when two stability conditions are close
(part (b) of the following Proposition).

\begin{prop}\label{FormalDistance}
Let $\sigma_1=(Z_1,P_1)$ and $\sigma_2=(Z_2,P_2)$ be stability conditions on $\DD$. 

\noindent
(a) Assume that
\begin{enumerate}
\item
$Z_{1} = Z_{2}$ and
\item
$P_{1}(0,1] \subset P_{2}(-1,2]$.
\end{enumerate}
Then $\sigma_{1} = \sigma_{2}$.

\noindent
(b) Assume that $\sigma_1$ is locally finite. There exists $\epsilon_0>0$ such that
if for some $0 < \epsilon < \epsilon_0$
one has
\begin{enumerate}
\item
$||Z_1-Z_2||_{\sigma_1} <\sin(\pi\epsilon)$ and
\item
$P_{2}(0,1] \subset P_{1}(-1+\epsilon, 2-\epsilon],$
\end{enumerate}
then $\sigma_2\in B_{\epsilon}(\sigma_1)$. 
\end{prop}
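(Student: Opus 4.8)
\textbf{Proof plan for Proposition \ref{FormalDistance}.}

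The plan is to prove (a) first and then bootstrap to (b). For part (a), the idea is to run the same argument as in Lemma \ref{simple-distance-lem} but keeping track of the fact that the hypothesis $P_1(0,1]\subset P_2(-1,2]$ is weaker than $d(P_1,P_2)<1$. First I would observe that since $Z_1=Z_2=:Z$, for any $\sigma_1$-semistable object $E$ with $\phi_1(E)=\phi$, the complex number $Z(E)$ has phase congruent to $\phi$ mod $2$; on the other hand $E\in P_2(-1,2]$ means $E$ has a $\sigma_2$-HN filtration with factors of $\sigma_2$-phases in $(-1,2]$, a window of length $3$. The key point is that the phase of $Z(E)$ being fixed (mod $2$) together with the additivity of $Z$ along the HN filtration forces all those $\sigma_2$-HN factors to have phase in a window of length $1$ (any spread of more than $1$ in phases would make the sum of the $Z$-values have a phase strictly between the extremes, contradicting that the total has the prescribed phase $\phi$), and that window must be $\{\phi-1+\text{(integer)}\}$-centered; combined with $\phi\in(0,1]$ and the constraint $(-1,2]$ one deduces $E\in P_2(\phi-1,\phi+1)$, hence in particular $E$ is $\sigma_2$-semistable after checking there is a unique such factor. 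More precisely I would argue that $E\in P_2(0,1]$ forces, via this phase-rigidity, that $P_1(0,1]\subset P_2(0,1]$ already; since both are hearts of bounded $t$-structures with the same $K$-theory this gives $P_1(0,1]=P_2(0,1]$, and then $d(P_1,P_2)<1$ so Lemma \ref{simple-distance-lem} applies. The main obstacle here is being careful that ``$E\in P_2(-1,2]$'' only gives an HN filtration, not semistability, so one really must exploit the sum-of-phases rigidity coming from $Z_1=Z_2$ to collapse the length-$3$ window to a length-$1$ one.

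For part (b), the plan is to reduce to (a) by a small perturbation argument together with Bridgeland's deformation theorem (Theorem 7.1 of \cite{Bridgeland06}, recalled in the excerpt). By local finiteness of $\sigma_1$, choose $\eps_0$ small enough that Bridgeland's result applies: for $0<\eps<\eps_0$, any central charge within $\sin(\pi\eps)$ of $Z_1$ in $\|\cdot\|_{\sigma_1}$ lifts to a stability condition $\tau=(Z_2,Q)\in B_\eps(\sigma_1)$, so $d(P_1,Q)<\eps$. Now I would compare $\tau=(Z_2,Q)$ with $\sigma_2=(Z_2,P_2)$: both have central charge $Z_2$, so by part (a) it suffices to check $P_2(0,1]\subset Q(-1,2]$. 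Since $d(P_1,Q)<\eps$ we have $Q(\phi)\subset P_1[\phi-\eps,\phi+\eps]$ for all $\phi$, equivalently $P_1(\psi)\subset Q[\psi-\eps,\psi+\eps]$; hence $P_1(-1+\eps,2-\eps]\subset Q(-1,2]$. Combining this with hypothesis (2), $P_2(0,1]\subset P_1(-1+\eps,2-\eps]\subset Q(-1,2]$, so part (a) gives $\sigma_2=\tau$, and therefore $\sigma_2=\tau\in B_\eps(\sigma_1)$, as desired.

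The step I expect to be the main obstacle is the phase-collapsing argument in part (a): one needs to show rigorously that if $Z(E)$ has a well-defined phase $\phi$ (as a nonzero complex number, up to the $\mathbb{Z}$-ambiguity that is pinned down by $E\in P_1(0,1]$) and $E$ decomposes via HN into factors of $\sigma_2$-phases spread over an interval of length up to $3$, then in fact those phases lie in an interval of length $<1$ situated so that $E\in P_2(0,1]$. Here one uses that if $z_1,\dots,z_n$ are nonzero vectors with phases $\psi_1>\cdots>\psi_n$ and $\psi_1-\psi_n\ge 1$, then $\phi(z_1+\cdots+z_n)$ lies strictly between $\psi_n$ and $\psi_1$ and cannot equal any value congruent mod $2$ to a number outside $(\psi_n,\psi_1)$; feeding in $\psi_1-\psi_n\le 3$ and the location of the $\sigma_1$-phase one eliminates all but the length-$<1$ possibility. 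I would also double-check the edge cases where some HN-phase equals an endpoint (e.g.\ $\phi=1$ exactly), where the half-open conventions for $P(0,1]$ versus the open cone matter.
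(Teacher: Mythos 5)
Your part~(b) reduction is the same as the paper's: apply Bridgeland's Theorem~7.1 to lift $Z_2$ to a stability $\tau=(Z_2,Q)\in B_\eps(\sigma_1)$, transfer the hypothesis $P_2(0,1]\sub P_1(-1+\eps,2-\eps]$ to $P_2(0,1]\sub Q(-1,2]$ using $d(P_1,Q)<\eps$, and invoke~(a). That part is fine.

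Part~(a) has a genuine gap. Your key claim---that since $Z_1=Z_2$ and $E$ is $\sigma_1$-semistable of prescribed phase $\phi$, additivity of $Z$ along the $\sigma_2$-HN filtration of $E$ forces the $\sigma_2$-HN phases into a window of length $<1$---is false as a statement about complex numbers. If the $\sigma_2$-HN phases $\psi_1>\cdots>\psi_n$ lie in $(-1,2]$ with spread $\ge1$, the vectors $Z(E_i)$ no longer lie in a convex half-plane cone, so the phase of $\sum Z(E_i)$ is not pinned to an interval; for instance, phases $1.5,\ 0.6,\ -0.5$ give vectors with (actual) $Z$-phases $-0.5,\ 0.6,\ -0.5$, whose sum can perfectly well have phase $0.8\in(0,1]$. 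Thus ``$\phi(Z(E))=\phi$'' alone is consistent with a spread of $2$, and your parenthetical ``contradicting that the total has the prescribed phase $\phi$'' does not go through. Even granting a length-$<1$ window, you would still need to argue that this window is $(0,1]$ rather than, say, $(0.3,1.2]$, which does not follow from phase arithmetic either. What the paper actually does is exploit the $t$-structure, not just the central charge: after deducing $P_2(0,1]\sub P_1(-1,2]$, it peels off the $\sigma_2$-HN truncation $F=P_2^{>1}(E)$ and shows, by a diagram chase with the two $t$-structures, that $F$ itself lands in $P_1(0,1]$; then $F\in P_1(0,1]\cap P_2(1,2]$ forces $\Im Z(F)=0$ and $Z(F)$ simultaneously in $\R_{<0}$ and $\R_{>0}$, hence $F=0$. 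A symmetric step at phase $0$ finishes. The extra leverage is that the hypothesis $P_1(0,1]\sub P_2(-1,2]$ constrains not merely $Z(E)$ but the individual HN-truncations of $E$ as objects of $P_1(0,1]$---that is exactly what the purely numerical argument cannot see, and is what you would need to add to make your plan work.
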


\begin{proof}
(a) First, using properties of $t$-structures we can easily deduce that $P_2(0,1]\sub P_1(-1,2]$.
Now given $E \in P_{1}(0,1]$, there is an exact triangle 
$$F \to E \to G\to F[1]$$ 
with $F \in P_{2}(1,2]$ and $G \in P_{2}(-1,1]$. Observe that $F \in P_{1}(>0)$ and $G \in P_{1}(\leq 2)$. Since $F$ is an extension of $E$ by $G[-1]$, we derive that $F \in P_{1}(0,1]$. But the intersection $P_{1}(0,1] \cap P_{2}(1,2]$ is trivial (since $Z_1=Z_2$), so $F = 0$. This proves that $E\in P_2(-1,1]$.

Next, consider an exact triangle
$$F \to E \to G\to F[1]$$
with $F \in P_{2}(0,1]$ and $G \in P_{2}(-1,0]$. Observe that $F \in P_{1}(>-1)$ and $G \in P_{1}(\leq 1]$. Since $G$ is an extension of $F[1]$ by $E$, we get $G \in P_{2}(-1,0] \cap P_{1}(0,1] = \{ 0 \}$. Therefore, $P_{1}(0,1]\sub P_{2}(0,1]$. Since these are both hearts of bounded $t$-structures, they have
to be equal, so $\sigma_{1} = \sigma_{2}$.

\noindent (b)
Let $\sigma=(Z_2,P)$ be the unique stability in $B_{\eps}(\sigma_1)$
lifting the central charge $Z_2$ 
---it exists by our assumption that $||Z_2-Z_1||_{\sigma_1}<\sin(\pi\epsilon)$ (using Theorem 7.1 of
\cite{Bridgeland06}). Then
$$P_2(0,1]\subset  P_1(-1+\eps,2-\eps]\subset P(-1,2].$$
By part (a), this implies that $\sigma=\sigma_2$.
\end{proof}

Now we can show that the gluing construction of Theorem
\ref{reason-glue-thm} is continuous.

\begin{thm}\label{a-thm} 
Let $(\DD_{1}, \DD_{2})$ be a 
semiorthogonal decomposition in a triangulated category $\DD$.
For a real number $a\in (0,1)$ let
$S(a)\sub\Stab(\DD_1)\times\Stab(\DD_2)$ denote the subset of $(\sigma_1,\sigma_2)$ 
such that $\sigma_1$ and $\sigma_2$ are reasonable stability conditions satisfying
\begin{enumerate}
\item $\Hom_\DD^{\leq 0}(P_1(0,1],P_2(0,1])=0$,
\item $\Hom_{\DD}^{\leq 0}(P_{1}(a,a+1], P_{2}(a,a+1]) = 0$.
\end{enumerate}
Let 
$\glue:S(a)\to\Stab(\DD)$ be the map associating to $(\sigma_1,\sigma_2)$ the corresponding
glued stability condition $\sigma$ on $\DD$ (see Theorem \ref{reason-glue-thm}).
Then the map $\glue$ is continuous on $S(a)$. 
\end{thm}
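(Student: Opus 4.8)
The plan is to show that $\glue$ is continuous at an arbitrary point $(\sigma_1,\sigma_2)\in S(a)$ by producing, for each small $\eps>0$, a neighborhood of $(\sigma_1,\sigma_2)$ in $S(a)$ whose image lies in $B_\eps(\glue(\sigma_1,\sigma_2))$. Write $\sigma=\glue(\sigma_1,\sigma_2)=(Z,P)$ and let $(\sigma_1',\sigma_2')$ be a nearby pair with glued stability $\sigma'=(Z',P')$. The criterion I would use is Proposition \ref{FormalDistance}(b): it suffices to check, for $\eps$ small, the central-charge estimate $\|Z-Z'\|_\sigma<\sin(\pi\eps)$ and the slicing containment $P'(0,1]\subset P(-1+\eps,2-\eps]$.

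First I would handle the central charge. Since $Z=Z_1\lambda_1+Z_2\rho_2$ and likewise for $Z'$, and since $[E]=[\lambda_1 E]+[\rho_2 E]$ in $K_0(\DD)$, the difference $Z-Z'$ is controlled by $Z_1-Z_1'$ on $\DD_1$ and $Z_2-Z_2'$ on $\DD_2$. The point to verify is that $\|\cdot\|_\sigma$ is dominated by the $\|\cdot\|_{\sigma_i}$ norms: by Proposition \ref{char-prop}(3) every $\sigma_i$-semistable object is $\sigma$-semistable, and conversely every $\sigma$-semistable object $E$ has $\lambda_1 E$ and $\rho_2 E$ lying in the $P_i$, with (using the torsion-pair/finite-length structure from the proof of Theorem \ref{reason-glue-thm}) $|Z(E)|$ comparable to $|Z_1(\lambda_1 E)|+|Z_2(\rho_2 E)|$ up to a fixed constant depending on $a$; reasonableness of $\sigma_1,\sigma_2$ gives the needed lower bounds. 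Hence $\|Z_i-Z_i'\|_{\sigma_i}$ small forces $\|Z-Z'\|_\sigma$ small, and $B_\eta(\sigma_i)$-smallness of the $\sigma_i'$ supplies this.

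Next the slicing containment. Here I would use the explicit description of the glued slicing built from the torsion pairs in Theorem \ref{reason-glue-thm}: $P'(0,a]=\{E:\lambda_1 E\in P_1'(0,a],\rho_2 E\in P_2'(0,a]\}$ and similarly $P'(a,1]$, and more generally one sees $\lambda_1(P'(b,c])\subset P_1'(b,c]$, $\rho_2(P'(b,c])\subset P_2'(b,c]$. If $d(P_i,P_i')<\eps'$ then $P_i'(0,1]\subset P_i[-\eps',1+\eps']$, and combined with the analogous containment for the refined pieces one gets $P'(0,1]\subset P(-1+\delta,2-\delta]$ for $\delta\to 1$ as $\eps'\to 0$ — the two-sided bound uses both the decomposition at level $a$ and at level $0$. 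One must also track that the $\Hom$-vanishing conditions (1),(2) defining $S(a)$ persist under small perturbation, but this follows because those $\Hom$-spaces being zero is an open condition: a nearby slicing $P_i'$ has $P_i'(0,1]\subset P_i(-\eps',1+\eps']$, and the relevant $\Hom^{\le 0}$ groups among small neighborhoods of the original hearts are still forced to vanish by the original conditions (1),(2) together with a slightly larger buffer, exactly as in the proof of Theorem \ref{reason-glue-thm}; shrinking the neighborhood if necessary keeps us inside $S(a)$ so that $\glue$ is defined there.

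The main obstacle I expect is the slicing estimate: turning $d(P_i,P_i')<\eps'$ into a genuine two-sided containment $P'(0,1]\subset P(-1+\eps,2-\eps]$ requires care, because the glued slicing is assembled from two different torsion pairs (at phase $0$ and at phase $a$) and one must feed the $\lambda_1,\rho_2$-compatibility of the $P'$-pieces through both decompositions simultaneously while keeping uniform control of the resulting phase interval. Everything else — the central-charge comparison and the openness of the defining conditions — is routine once the comparability constants (depending on $a$ and on the reasonableness bounds, which themselves vary continuously, so can be taken locally uniform) are pinned down. Combining the two estimates and invoking Proposition \ref{FormalDistance}(b) then yields $\sigma'\in B_\eps(\sigma)$, proving continuity of $\glue$ on $S(a)$.
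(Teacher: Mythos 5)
Your overall plan matches the paper's: reduce to Proposition~\ref{FormalDistance}(b) and verify separately a central-charge estimate and a slicing containment, the former by expanding $\lambda_1 E$ and $\rho_2 E$ into HN factors that lie in a fixed phase cone determined by $a$, so that the scalar product with a suitable unit vector lets you compare $|Z(E)-Z'(E)|$ with $|Z(E)|$. That part of the proposal is sound and is essentially the paper's computation. (The appeal to ``reasonableness giving the needed lower bounds'' is a red herring here --- the cone argument needs no lower bound on $|Z(E)|$; reasonableness was only used in Theorem~\ref{reason-glue-thm} to get the finite-length torsion pairs.)

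The gap is in the slicing step. The claim that ``more generally one sees $\lambda_1(P'(b,c]) \subset P_1'(b,c]$, $\rho_2(P'(b,c]) \subset P_2'(b,c]$'' is not established by Theorem~\ref{reason-glue-thm}: that proof only gives the compatibility of $\lambda_1,\rho_2$ with the two coarse torsion pairs $(P(a,1],P(0,a])$ at levels $0$ and $a$, after which the finer slicing is produced abstractly from the Harder--Narasimhan property via Proposition~\ref{Z-torsion-prop}, with no control of $\lambda_1,\rho_2$ on the individual $P(\phi)$'s. Indeed, this stronger compatibility is exactly what the extra hypothesis of Proposition~\ref{GluingCondition}(b) buys, and it is not available under the hypotheses of Theorem~\ref{a-thm}. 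Fortunately you do not need it. It suffices to use what is already proved: $P'(0,1]=\langle P'_1(0,1],P'_2(0,1]\rangle$ (the glued heart, equation~\eqref{H-for2}), the bound $P'_i(0,1]\subset P_i(-\eps',1+\eps']$ from $d(P_i,P'_i)<\eps'$, and the inclusion $P_i(\phi)\subset P(\phi)$ of Proposition~\ref{char-prop}(3). Together these give $P'(0,1]\subset P(-\eps',1+\eps']$ directly, which is a sharper containment than the $P(-1+\delta,2-\delta]$ you aim for and is exactly the paper's argument (the paper writes it with $P$ and $P'$ exchanged, which is equivalent since $d$ is symmetric). Finally, the paragraph about tracking the persistence of the $\Hom$-vanishing conditions under perturbation is unnecessary for this statement: the theorem only asserts that $\glue$ is continuous \emph{on} $S(a)$, so one only ever compares values at pairs $(\sigma_1',\sigma_2')$ already known to lie in $S(a)$. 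Openness of the domain is a separate issue, addressed (under a stronger hypothesis) in Corollary~\ref{glue-cor}.
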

\begin{proof}
Let $\sigma_i=(Z_i,P_i)$, $\sigma'_i=(Z'_i,P'_i)$ be stabilities
on $\DD_i$ for $i=1,2$, such that $(\sigma_1,\sigma_2)$ and $(\sigma'_1,\sigma'_2)$ are points of $S(a)$,
and let us denote by $\sigma=(Z,P)$ and $\sigma'=(Z',P')$ the corresponding glued stability
conditions. Assume that $\sigma'_i\in B_{\de}(\sigma_i)$ for $i=1,2$. Then for 
$\eps\ge\delta$ we have
\begin{eqnarray*}
P(0,1] = \langle P_{1}(0,1], P_{2}(0,1]\rangle & \sub & \langle P'_{1}(-\epsilon,1+\epsilon], 
P'_{2}(-\epsilon,1+\epsilon]\rangle \\ 
& \sub & P'(-\epsilon,1+\epsilon].
\end{eqnarray*}
Thus, we can deduce the required continuity from 
Proposition \ref{FormalDistance}(b), once we show that $||Z-Z'||_{\sigma} \leq\sin(\pi\epsilon)$ 
provided $\de$ is small enough. 
Let $\phi \in (0,1]$ and $E \in P(\phi).$ We have to prove that
\[ |Z(E)-Z'(E)| \leq |Z(E)|\sin(\pi \epsilon). \] 
Assume first that $\phi \in (a,1]$. Let $h: \C \rightarrow \R$ denote the scalar product with
the unit vector of phase $\frac{a+1}{2}$. 
Then there exists a positive constant $c$ (depending only on $a$) such that
\[ h(z) \leq |z| \leq c\cdot h(z), \] 
for all nonzero complex numbers $z$ with phase $\theta$, where $a\leq \theta \leq 1$. 

Let $F_{1}, \cdots, F_{n}$ (resp., $G_{1}, \cdots, G_{m})$ be the HN-factors of $\lambda_{1}(E)$ (resp., $\rho_{2}(E)$) with respect to $\sigma_1$ (resp., $\sigma_2$). 
Then we have
\begin{eqnarray*}
|Z(E)-Z'(E)| & \leq & |Z_{1}(\lambda_{1}E)-Z_{1}'(\lambda_{1}E)| + |Z_{2}(\rho_{2}E)-Z_{2}'(\rho_{2}E)| \\
& \leq & \sum_{i=1}^{n} |Z_{1}(F_{i})-Z_{2}'(F_{i})| + \sum_{j=1}^{m} |Z_{2}(G_{j})-Z_{2}'(G_{j})| \\
& \leq & \sin(\pi \delta)\left[\sum_{i=1}^{n} |Z_{1}(F_{i})|+\sum_{j=1}^{m} |Z_{2}(G_{j})|\right].
\end{eqnarray*}
Recall that by \eqref{Pa1-eq}, we have $\la_1(E)\in P_1(a,1]$ and $\rho_2(E)\in P_2(a,1]$.
Hence, all the numbers $Z_1(F_i)$ and $Z_2(G_j)$ have phases between $a$ and $1$, so we derive
\begin{eqnarray*}
|Z(E)-Z'(E)| & \leq 
c \sin(\pi \delta)[\sum_{i=1}^{n} h(Z_{1}(F_{i}))+\sum_{j=1}^{m} h(Z_{2}(G_{j}))]\\
& = c \sin(\pi \delta)h(Z(E))\leq c \sin(\pi \delta)|Z(E)|.
\end{eqnarray*}
So $\delta$ must be chosen to satisfy the relation $c \sin(\pi \delta) < \sin(\pi \epsilon).$ A similar argument covers the case of objects $F \in P(0,a]$ and imposes a second condition that $c' \sin(\pi \delta) < \sin(\pi \epsilon)$ for some positive constant $c'$, depending only on $a$. Given $\delta$ satisfying both conditions, it follows that $$||Z-Z'||_{\sigma} \leq \sin(\pi \epsilon).$$
\end{proof}

The following Corollary describes an open subset of pairs of stabilities that can be glued,
obtained by imposing a stronger orthogonality assumption on $(\sigma_1,\sigma_2)$.

\begin{cor}\label{glue-cor}
Let $U\sub\Stab(\DD_1)\times\Stab(\DD_2)$ denote the set of pairs 
of  reasonable stabilities
$(\sigma_1=(Z_1,P_1)$ and $\sigma_2=(Z_2,P_2))$
such that  for some $\eps>0$ one has
$$\Hom_{\DD}^{\le 0}(P_1(-\eps,1],P_2(0,1+\eps))=0.$$
Then $U$ is open and the gluing map $\glue:U\to\Stab(\DD)$ 
is continuous.
\end{cor}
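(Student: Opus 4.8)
The plan is to derive Corollary~\ref{glue-cor} from Theorem~\ref{a-thm}, the point being that the single one-sided orthogonality condition defining $U$ is, after a little rearranging, much stronger than the pair of conditions $(1),(2)$ required there. The main step I would establish is a ``fattening'' of the orthogonality condition: if $(\sigma_1,\sigma_2)$ is a pair of reasonable stability conditions with $\Hom_\DD^{\le 0}(P_1(-\eps,1],P_2(0,1+\eps))=0$ for some $\eps\in(0,1)$ (there is no loss in taking $\eps<1$, as $\eps$ may always be decreased), then in fact
\[ \Hom_\DD^{\le 0}\bigl(P_1(-\eps,1+\eps),\,P_2(-\eps,1+\eps)\bigr)=0. \]
To prove this, recall that for $\phi_1<\psi\le\phi_2$ the Harder-Narasimhan filtration puts every object of $P(\phi_1,\phi_2]$ into an exact triangle $E'\to E\to E''$ with $E'\in P(\psi,\phi_2]$ and $E''\in P(\phi_1,\psi]$. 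Splitting the first argument at $\psi=1$ and the second at $\psi=0$ and feeding the resulting triangles into the long exact $\Hom$-sequences, the asserted vanishing reduces to the vanishing of $\Hom_\DD^{\le 0}$ from each of $P_1(-\eps,1]$ and $P_1(1,1+\eps)$ into each of $P_2(0,1+\eps)$ and $P_2(-\eps,0]$. The case $P_1(-\eps,1]\to P_2(0,1+\eps)$ is the hypothesis; the other three follow from it using $P_1(1,1+\eps)=P_1(0,\eps)[1]$ with $P_1(0,\eps)\sub P_1(-\eps,1]$, and $P_2(-\eps,0]=P_2(1-\eps,1][-1]$ with $P_2(1-\eps,1]\sub P_2(0,1+\eps)$ (here one uses $1-\eps>0$), since shifting the first argument by $[1]$ or the second by $[-1]$ only shrinks a $\Hom^{\le 0}$-group.

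Granting this, for every $a\in(0,1)$ one has $P_i(a,1],P_i(0,a]\sub P_i(-\eps,1+\eps)$; decomposing $P_i(a,a+1]$ by its Harder-Narasimhan filtration into the parts lying in $P_i(a,1]$ and in $P_i(1,a+1]=P_i(0,a][1]$, conditions $(1)$ and $(2)$ of Theorem~\ref{a-thm} reduce, via the long exact sequences, to $\Hom_\DD^{\le 0}$-vanishings between subcategories of $P_1(-\eps,1+\eps)$ and of $P_2(-\eps,1+\eps)$ (possibly after a shift in one argument), hence hold. Thus $U\sub S(a)$ for every $a\in(0,1)$; fixing $a=1/2$ we get $U\sub S(1/2)$, and in particular the glued stability condition exists for every member of $U$.

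For openness, if $\sigma_i'\in B_\de(\sigma_i)$ then $P_i'(b,c]\sub P_i(b-\de,c+\de)$, so for $\de<\eps/2$ each nearby pair $(\sigma_1',\sigma_2')$ satisfies
\[ \Hom_\DD^{\le 0}(P_1'(-\eps/2,1],P_2'(0,1+\eps/2))\sub \Hom_\DD^{\le 0}(P_1(-\eps,1+\eps),P_2(-\eps,1+\eps))=0, \]
and stays reasonable once $\de$ is small enough, since the set of reasonable stability conditions is open (Section~\ref{reason-sec}); hence $(\sigma_1',\sigma_2')\in U$, so $U$ is open, and in particular $U$ is open in $S(1/2)$. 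Finally, $\glue$ is continuous on $S(1/2)$ by Theorem~\ref{a-thm}, and since the glued stability condition is uniquely determined by $(\sigma_1,\sigma_2)$, the map $\glue$ of the Corollary is the restriction of that continuous map to the open subset $U$, hence is continuous.

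The real content is the fattening step; everything afterwards is routine bookkeeping with the metric $d(P,Q)$ and the long exact sequences. The one point to watch is that the interval decompositions $P(\phi_1,\phi_2]=[\,P(\psi,\phi_2],P(\phi_1,\psi]\,]$ used throughout remain valid even though the relevant intervals have length $>1$ here; this is harmless because one only applies them to a single bounded object, whose Harder-Narasimhan filtration has finitely many factors.
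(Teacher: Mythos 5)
Your fattening step is correct, and it is essentially the same observation the paper makes in a slicker form: the hypothesis $\Hom_\DD^{\le 0}(P_1(-\eps,1],P_2(0,1+\eps))=0$ is equivalent to $\Hom_\DD(P_1(-\eps,+\infty),P_2(-\infty,1+\eps))=0$ (iterate your argument; there is no upper bound needed on the first interval nor a lower bound on the second). Your openness argument is also fine.

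However, the step ``Thus $U\sub S(a)$ for every $a\in(0,1)$'' has a genuine gap, and it is false in general. Condition (2) of Theorem~\ref{a-thm} requires $\Hom_\DD^{\le 0}(P_1(a,a+1],P_2(a,a+1])=0$. Decomposing each $P_i(a,a+1]$ into $P_i(a,1]$ and $P_i(1,a+1]=P_i(0,a][1]$ as you propose, the four resulting pairings include $\Hom_\DD^{\le 0}\bigl(P_1(a,1],\,P_2(0,a][1]\bigr)=\Hom_\DD^{\le 1}\bigl(P_1(a,1],\,P_2(0,a]\bigr)$. Shifting the \emph{second} argument up by $[1]$ enlarges, rather than shrinks, a $\Hom^{\le 0}$-group: the $\Hom^{\le 0}$ part vanishes by your fattened orthogonality, but the leftover term $\Hom^1\bigl(P_1(a,1],\,P_2(0,a]\bigr)=\Hom\bigl(P_1(a,1],\,P_2(1,a+1]\bigr)$ is not controlled. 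Concretely, with $A\in P_1$ of phase in $(a,1]$ and $B\in P_2$ of phase in $(1,a+1]$, neither $B$ nor any integer shift of the pair $(A,B)$ lands both arguments in $P_1(-\eps,+\infty)\times P_2(-\infty,1+\eps)$ once $\eps\le a\le 1-\eps$; since there is no a priori constraint on $\Hom(\DD_1,\DD_2)$ beyond the hypothesis, such a nonzero $\Hom$ is perfectly possible. So taking $a=1/2$ does not work for small $\eps$.

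The fix is to let $a$ depend on $\eps$: if $(\sigma_1,\sigma_2)\in T_\eps$ (your set of pairs satisfying the condition with that specific $\eps$), then $T_\eps\sub S(a)$ for any $a<\eps$, because then $P_2(a,a+1][n]\sub P_2(-\infty,1+\eps)$ for all $n\le 0$. Combined with your openness argument (which shows a $\delta$-ball around a point of $T_\eps$ lies in $T_{\eps-\delta}$), this gives an open cover of $U$ on which $\glue$ agrees with the continuous maps $\glue|_{S(a)}$ of Theorem~\ref{a-thm}, hence is continuous. This is precisely the route the paper takes, phrased via the equivalent unbounded form of the orthogonality; your ``fattened'' finite interval version is enough provided you drop the claim that a single $a$ works for all of $U$.
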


\begin{proof} Note that our assumption on $(\sigma_1,\sigma_2)$ is equivalent to
$$\Hom_\DD(P_1(-\eps,+\infty),P_2(-\infty,1+\eps))=0.$$
For each $\eps>0$
let us denote by $T_{\eps}$ the set of pairs  $(\sigma_1,\sigma_2)$ satisfying this condition.
Note that $U=\cup_{\eps>0}T_{\eps}$. Now to check that
$U$ is open suppose we have $(\sigma_1,\sigma_2)\in T_{\eps}$.
Given a pair $(\sigma'_1=(Z'_1,P'_1),\sigma'_2=(Z'_2,P'_2))$,
such that $\sigma'_i\in B_{\de}(\sigma_i)$, for $i=1,2$, where $0<\de<\eps$, we have
$P'_1(>-\eps+\de)\sub P_1(>-\eps)$ and $P'_2(<1+\eps-\de)\sub P_1(<1+\eps)$.
Hence, $(\sigma'_1,\sigma'_2)$ belongs to $T_{\eps-\de}$. It remains to apply Theorem
\ref{a-thm}.
\end{proof}

On the other hand, in the situation when $\DD_1$ is generated by an exceptional object,
we have the following result that will be used later.

\begin{cor}\label{exc-cor} 
Let $(\DD_{1}, \DD_{2})$ be a 
semiorthogonal decomposition in a triangulated category $\DD$.

\noindent (i)
Assume that $\DD_1$ is generated by an exceptional object $E_1$, and
$H_2\sub\DD_2$ is a heart of some bounded $t$-structure on $\DD_2$, such that
$\Hom^{\leq -1}_{\DD}(E_1,H_2)=0$. Let $S_2\sub\Stab(\DD_2)$ denote the set
of reasonable stability conditions $\sigma_2=(Z,P)$ with $P(0,1]=H_2$.
On the other hand, let $R_1\sub\Stab(\DD_1)$ denote the set of stability conditions
such that the phase of $E_1$ is $<0$. Then there is continuous gluing map
$R_1\times S_2\to\Stab(\DD)$.

\noindent (ii) 
Similarly, assume that $\DD_2$ is generated by an exceptional object $E_2$, and
$H_1\sub\DD_1$ is a heart of some bounded $t$-structure on $\DD_2$, such that
$\Hom^{\leq -1}_{\DD}(H_1,E_2)=0$. Let $S_1\sub\Stab(\DD_1)$ denote the set
of reasonable stability conditions with the heart $H_1$, and 
let $R_2\sub\Stab(\DD_2)$ denote the set of stability conditions
such that the phase of $E_2$ is $>1$. Then there is continuous gluing map
$S_1\times R_2\to\Stab(\DD)$.
\end{cor}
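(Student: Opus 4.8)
The plan is to deduce Corollary \ref{exc-cor} from Corollary \ref{glue-cor} by checking that, in each of the two situations described, the relevant pairs $(\sigma_1,\sigma_2)$ satisfy the open condition $\Hom_\DD^{\le 0}(P_1(-\eps,1],P_2(0,1+\eps))=0$ for a suitable $\eps>0$. I will treat case (i) in detail; case (ii) is entirely parallel (or follows by passing to the opposite category). First I would record the elementary description of $\Stab(\DD_1)$ when $\DD_1=\lan E_1\ran$ is generated by an exceptional object: such a category is equivalent to $\DD^b(k)$, so a stability condition is determined by the pair $(Z(E_1),\phi(E_1))$, with $P(\phi)$ consisting of sums of shifts of $E_1$ placed in the single phase $\phi+\Z$. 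In particular, for $\sigma_1\in R_1$ with $\phi:=\phi(E_1)<0$, the slicing $P_1$ is supported on the arithmetic progression $\phi+\Z$, and $P_1(-\eps,1]$ is concentrated in the phases $\phi+\Z$ lying in $(-\eps,1]$; since $\phi<0$, for $\eps$ small enough (namely $\eps<-\phi$ if $\phi>-1$, and more care if $\phi$ is very negative) $P_1(-\eps,1]$ contains only $E_1[n]$ for those $n$ with $\phi+n\in(0,1]$, i.e. the same shifts as $P_1(0,1]$. So $P_1(-\eps,1]=P_1(0,1]=\lan E_1[n]\ran$ for a bounded set of $n$.

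Next I would translate the hypothesis $\Hom^{\le -1}_\DD(E_1,H_2)=0$ into the orthogonality needed. On one hand, for the glued heart to exist at all we need $\Hom^{\le 0}_\DD(P_1(0,1],H_2)=0$; but $P_1(0,1]$ is generated by shifts $E_1[n]$ with $\phi+n\in(0,1]$, so $\Hom^{\le 0}_\DD(E_1[n],H_2)=\Hom^{\le -n}_\DD(E_1,H_2)$, and since $\phi<0$ forces $n\ge 1$ for such shifts, we get $\Hom^{\le -n}\sub\Hom^{\le -1}$, which vanishes by hypothesis. This already shows $(\sigma_1,\sigma_2)$ lies in the domain of the gluing construction. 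To get the \emph{open} condition, I observe that $\Hom^{\le 0}_\DD(P_1(-\eps,1],P_2(0,1+\eps))$: with $\eps$ small, $P_1(-\eps,1]$ is still just the shifts $E_1[n]$ with $n\ge 1$ (as above), and $P_2(0,1+\eps)\sub P_2(0,1]\cup P_2[1]=H_2\cup H_2[1]\sub[H_2[-?]\ldots]$; more precisely $P_2(0,1+\eps)$ is contained in the extension closure of $H_2$ and $H_2[1]$, hence $\Hom^{\le 0}_\DD(E_1[n],P_2(0,1+\eps))$ reduces to $\Hom^{\le -n}_\DD(E_1,H_2)$ and $\Hom^{\le -n+1}_\DD(E_1,H_2)$, both inside $\Hom^{\le -1}$ when $n\ge 2$, and for $n=1$ the second summand is $\Hom^{\le 0}_\DD(E_1,H_2)$ which need not vanish. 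This is the one point requiring care: I must choose $\eps$ small enough that the shift $n=1$ does not actually occur in $P_1(-\eps,1]$ unless $\phi\in(-1,0)$, and in that case note that $P_2(0,1+\eps)$'s "$H_2[1]$-part" is in phases close to $1$, while $E_1$ sits in phase $\phi<0$, so the relevant $\Hom$ group between a phase-$\phi$ and a phase-$(>1)$ object is $\Hom^{\le 0}_\DD(E_1[1],H_2)=\Hom^{\le -1}_\DD(E_1,H_2)=0$ after all. So the bookkeeping works out, and there is an $\eps>0$ with $\Hom^{\le 0}_\DD(P_1(-\eps,1],P_2(0,1+\eps))=0$.

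Having placed $R_1\times S_2$ inside the set $U$ of Corollary \ref{glue-cor}, I would conclude: $R_1\times S_2\sub U$, the gluing map on $U$ is continuous by Corollary \ref{glue-cor}, hence its restriction to $R_1\times S_2$ is a continuous gluing map $R_1\times S_2\to\Stab(\DD)$, as claimed. For part (ii), by symmetry (reversing the roles of $\DD_1$ and $\DD_2$ and of phases, i.e. replacing the semiorthogonal decomposition and stability conditions by their duals, or simply repeating the argument with inequalities reversed), the hypothesis $\Hom^{\le -1}_\DD(H_1,E_2)=0$ together with $\phi(E_2)>1$ yields $\Hom^{\le 0}_\DD(P_1(-\eps,1],P_2(0,1+\eps))=0$ for small $\eps$, because $P_2(0,1+\eps)$ is concentrated in the progression $\phi(E_2)+\Z$ whose members in $(0,1+\eps)$ are shifts $E_2[m]$ with $m\le -1$, and $\Hom^{\le 0}_\DD(H_1,E_2[m])=\Hom^{\le m}_\DD(H_1,E_2)\sub\Hom^{\le -1}_\DD(H_1,E_2)=0$; then again invoke Corollary \ref{glue-cor}.

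The main obstacle I anticipate is purely bookkeeping: making the dependence of the admissible $\eps$ on the phase $\phi(E_1)$ (resp. $\phi(E_2)$) explicit and uniform enough that the subset $R_1\times S_2$ (resp. $S_1\times R_2$) is genuinely contained in the open set $U$ — in particular handling the boundary shift ($n=1$ in case (i)) where the exceptional object's phase is close to $0$ and one of the summands in the $\Hom$ computation lands exactly in $\Hom^{\le 0}$ rather than $\Hom^{\le -1}$. Once one sees that this borderline $\Hom$ is between objects of phase $\phi(E_1)<0$ and phase $>1$, so that the \emph{effective} shift pushes it back into $\Hom^{\le -1}$, the difficulty evaporates; there is no essential analytic content beyond Corollaries \ref{glue-cor} and \ref{glue-cor}'s inputs.
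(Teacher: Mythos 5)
Your strategy—reduce to Corollary \ref{glue-cor} by showing $R_1\times S_2\subset U$—differs from the paper's, which instead shows $R_1(\eps)\times S_2\subset S(1-\eps)$ and invokes Theorem \ref{a-thm} directly. Unfortunately, the reduction has a real gap, and you actually put your finger on exactly where it lies before mis-resolving it.

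The problematic term is $\Hom^{\le 0}_\DD(E_1[1],P_2(1,1+\eps))$. Since $P_2(1,1+\eps)=P_2(0,\eps)[1]\subset H_2[1]$, this equals $\Hom^{\le 0}_\DD(E_1[1],H_2[1])=\Hom^{\le 0}_\DD(E_1,H_2)$, which contains the degree-zero group $\Hom^0_\DD(E_1,H_2)$; the hypothesis $\Hom^{\le -1}_\DD(E_1,H_2)=0$ says nothing about it. In your attempted fix you instead compute $\Hom^{\le 0}_\DD(E_1[1],H_2)=\Hom^{\le -1}_\DD(E_1,H_2)=0$, but that is a different (and harmless) Hom-group: the shift on the target has been dropped. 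There is no phase argument available to rescue this, since $\sigma_1$ and $\sigma_2$ live on different subcategories and no comparison of their phases is permitted before the glued slicing has been constructed. Nor does shrinking $\eps$ help: a reasonable $\sigma_2$ may well have nonzero semistable objects in $P_2(0,\eps)$ for every $\eps>0$ (e.g.\ the standard stability on a curve), and $\Hom^0_\DD(E_1,P_2(0,\eps))$ need not vanish. So in general $R_1\times S_2\not\subset U$, and Corollary \ref{glue-cor} cannot be applied.

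What saves the statement, and what the paper does, is to use Theorem \ref{a-thm} with $a=1-\eps$ on the stratum $R_1(\eps)=\{\phi(E_1)<-\eps\}$. There condition (2) asks for $\Hom^{\le 0}(P_1(a,a+1],P_2(a,a+1])=0$, i.e.\ $\Hom^{\le 0}(P_1(-\eps,1-\eps],P_2(-\eps,1-\eps])=0$ after the common shift by $1$; since $P_1(-\eps,1-\eps]=\lan E_1[m]\ran$ with $m\ge 1$ and $P_2(-\eps,1-\eps]\subset P_2(\le 1)$ (a union of $H_2[-k]$ for $k\ge 0$), every Hom-group involved is $\Hom^{\le -m-k}(E_1,H_2)\subset\Hom^{\le -1}(E_1,H_2)=0$. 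The crucial point is that in Theorem \ref{a-thm} \emph{both} intervals are translated by the same $a$, so the Hom-degree gap is preserved, whereas the open condition of Corollary \ref{glue-cor} widens $P_1$ downward and $P_2$ upward in \emph{opposite} directions, losing exactly one degree of vanishing—which is the degree your hypothesis does not control. You should rewrite the argument to show $R_1(\eps)\times S_2\subset S(1-\eps)$ for every $\eps>0$ and then apply Theorem \ref{a-thm}; continuity on $R_1\times S_2=\bigcup_{\eps>0}R_1(\eps)\times S_2$ follows since each $R_1(\eps)\times S_2$ is open in $R_1\times S_2$.
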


\begin{proof}
We will only consider (i) since the proof of (ii) is analogous. Let $R_1(\eps)\sub\Stab(\DD_1)$ denote
the set of stability conditions such that the phase of $E_1$ is $<-\eps$. 
It is enough to check that for every $\eps>0$ one has $R_1(\eps)\times S_2\sub S(1-\eps)$,
where $S(1-\eps)\sub\Stab(\DD_1)\times\Stab(\DD_2)$ is the subset considered in Theorem \ref{a-thm}
for $a=1-\eps$.
Note that $P_1(0,1]=\lan E_1[n]\ran$, where $n$ is determined by the condition that the phase
of $E_1$ is in the interval $(-n,-n+1]$. Hence, $n\ge 1$, so the condition 
$\Hom^{\le 0}(P_1(0,1],H_2)=0$ is satisfied. Similarly, $P_1(-\eps, 1-\eps]=\lan E_1[m]\ran$,
where $m\ge 1$. Hence, $\Hom^{\le 0}(P_1(-\eps,1-\eps], P_2(\le 1))=0$ which implies the
condition (2) of Theorem \ref{a-thm} for $a=1-\eps$.
\end{proof} 

\section{Semiorthogonal decompositions associated with double coverings}


Let $\pi: X \to Y$ be a double covering of smooth projective varieties $X$ and $Y$, ramified along a smooth divisor $R$ in $Y$. Then we have an action of $\Z_2$ on $X$ such that the nontrivial element acts
by the corresponding involution $\tau:X\to X$.
Let us denote by $\DD_{\Z_2}(X)$ the corresponding bounded derived category of $\Z_2$-equivariant
coherent sheaves on $X$. We denote by $\zeta$ the nontrivial character of $\Z_2$.
Note that $\tau$-invariant stability conditions on $\DD(X)$ correspond to stability conditions 
on $\DD_{\Z_2}(X)$
that are invariant under the autoequivalence $F\mapsto F\otimes\zeta$ (see \cite{MMS} or \cite{Polishchuk06}). 
Below we will show how to construct stability conditions on $\DD_{\Z_2}(X)$ starting from a pair of
 stability conditions on $\DD(Y)$ and on $\DD(R)$, satisfying certain assumptions.

Let us denote by $i:R\to X$ (resp., $j:R\to Y$) the closed embedding of the ramification divisor
into $X$ (resp., $Y$).
For every sheaf $F$ on $R$ we equip $i_*F$ with the trivial $\Z_2$-equivariant structure.
This gives a functor $i_*:\DD(R)\to \DD_{\Z_2}(X)$. On the other hand, for a coherent sheaf $F$ on $Y$
we have a natural $\Z_2$-equivariant structure on $\pi^*F$, so we obtain a functor
$\pi^*:\DD(Y)\to \DD_{\Z_2}(X)$.

\begin{thm}\label{semiorth-thm} 
The functors $i_*:\DD(R)\to \DD_{\Z_2}(X)$ and $\pi^*:\DD(Y)\to \DD_{\Z_2}(X)$
are fully faithful.
We have two canonical semiorthogonal decompositions of $\DD_{\Z_2}(X)$:
$$
\DD_{\Z_{2}}(X) = \langle \pi^{*}\DD(Y), i_{*}\DD(R) \rangle =
 \langle \zeta \otimes i_{*}\DD(R), \pi^{*}\DD(Y) \rangle $$
\end{thm}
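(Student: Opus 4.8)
The plan is to establish the three assertions in turn: (1) full faithfulness of $\pi^*$; (2) full faithfulness of $i_*$; and (3) the two semiorthogonal decompositions.

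\textbf{Full faithfulness.} For $\pi^*$, the key computation is $\pi_*\pi^*F \simeq F\oplus(F\otimes L^{-1})$ as $\Z_2$-equivariant objects, where $L$ is the line bundle with $L^2=\OO_Y(R)$ defining the double cover; the $\Z_2$-action splits $\pi_*\OO_X = \OO_Y\oplus L^{-1}$ into the trivial and the $\zeta$-isotypic pieces. Taking $\Z_2$-invariants of $\Hom_{\DD_{\Z_2}(X)}(\pi^*F,\pi^*G) \simeq \Hom_{\DD(Y)}(F,\pi_*\pi^*G)$, only the $\Z_2$-invariant summand $\Hom_{\DD(Y)}(F,G)$ survives, which gives full faithfulness. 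For $i_*$, since $i = \iota\circ j'$ where we factor through the fixed locus, one uses that $i^*i_* F$ has the standard two-term structure (because $R\subset X$ is a divisor), namely a triangle $F\otimes N^{-1}[1]\to i^*i_*F\to F$ with $N$ the normal bundle of $R$ in $X$; but the equivariant structure is the crucial point, and here $N$ carries the $\zeta$-character because the involution acts by $-1$ on the normal direction to the ramification divisor. Hence $\Hom_{\DD_{\Z_2}(X)}(i_*F, i_*G) = \Hom_{\DD_{\Z_2}(R)}(F, i^!i_*G)$, and after unwinding, the nontrivial-character pieces of the two-term complex do not contribute to $\Z_2$-invariants, leaving $\Hom_{\DD(R)}(F,G)$.

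\textbf{Semiorthogonality and generation.} First I would check the vanishing $\Hom_{\DD_{\Z_2}(X)}(i_*G, \pi^*F) = 0$ for $G\in\DD(R)$, $F\in\DD(Y)$. By adjunction this is $\Hom_{\DD(R)}(G, i^!\pi^*F)$, and since $\pi\circ i = j$ (the embedding of $R$ into $Y$ as the branch locus), together with the behavior of $i^!$, one computes $i^!\pi^*F \simeq (j^*F)\otimes(\text{character or shift data})$ that lands in the $\zeta$-isotypic part, so the invariants vanish. To prove generation, I would argue that the smallest triangulated subcategory containing $\pi^*\DD(Y)$ and $i_*\DD(R)$ is all of $\DD_{\Z_2}(X)$: for any $\Z_2$-equivariant sheaf $\FF$ on $X$, the counit $\pi^*\pi_*\FF\to\FF$ (suitably interpreted via the projection formula) has cone supported set-theoretically on $R$, and equivariant sheaves supported on $R$ can be resolved using $i_*$ of sheaves on $R$ possibly twisted by $\zeta$ — but since $\zeta\otimes i_*\DD(R)$ is again in the subcategory (as $i_*$ of the same category, the equivariant structure being trivial vs.\ $\zeta$ both arising as pushforwards), one gets the full category. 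The second decomposition follows from the first by the standard mutation/duality formalism: $\zeta\otimes(-)$ is an autoequivalence, and applying it together with a Serre-functor twist exchanges the order of the two blocks, so $\langle\zeta\otimes i_*\DD(R),\pi^*\DD(Y)\rangle$ is also a semiorthogonal decomposition — one needs $\Hom_{\DD_{\Z_2}(X)}(\pi^*F, \zeta\otimes i_*G)=0$, which is the $\zeta$-twist of an adjunction computation dual to the first vanishing.

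\textbf{Main obstacle.} The delicate point throughout is keeping careful track of the $\Z_2$-equivariant structures — in particular verifying that the normal bundle $N_{R/X}$ carries the character $\zeta$, that $\pi_*\OO_X$ splits as $\OO_Y\oplus L^{-1}$ with $L^{-1}$ in the $\zeta$-part, and that when one takes $\Z_2$-invariants of the various $\Hom$ and adjunction computations, exactly the ``trivial character'' summand survives in each case. The underlying non-equivariant statements are essentially standard (this is the geometry of a double cover), so the real work is the equivariant bookkeeping; I expect this to be where the proof spends most of its effort, with the generation statement requiring the additional observation that both $i_*\DD(R)$ and $\zeta\otimes i_*\DD(R)$ are needed to capture all equivariant sheaves supported on the ramification divisor.
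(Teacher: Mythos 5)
Your proposal matches the paper's proof for the full-faithfulness claims (same $\Z_2$-isotypic decomposition of $\pi_*\OO_X$, same two-term description of $Li^*i_*$ with the key observation that the conormal bundle carries the $\zeta$-character), and your treatment of semiorthogonality via a direct adjunction computation with $i^!$ is a legitimate alternative to the paper's use of Serre duality --- both reduce to the fact that $i^*\om_X$, resp.\ $\om_{R/X}$, carries the $\zeta$-character, so the trivial isotypic piece dies.

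The generation step, however, has a genuine gap. Your d\'evissage correctly reduces to objects supported on $R$, and correctly concludes that $\DD_{\Z_2}(X)$ is generated as a triangulated category by $\pi^*\DD(Y)$, $i_*\DD(R)$, and $\zeta\otimes i_*\DD(R)$; but that is three blocks, and the theorem asserts a decomposition with only two. You therefore need the inclusion $\zeta\otimes i_*\DD(R)\subset\langle\pi^*\DD(Y),i_*\DD(R)\rangle$, and your parenthetical justification --- that both the trivial and the $\zeta$-twisted pushforwards ``arise as pushforwards'' --- does not establish this; $i_*\DD(R)$ and $\zeta\otimes i_*\DD(R)$ are genuinely different full subcategories, and only the former sits inside the candidate semiorthogonal piece by definition. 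Elementary manipulations with the Koszul sequence for $R\subset X$ produce, e.g., $\zeta\otimes i_*N^\vee$ inside $\langle\pi^*\DD(Y),i_*\DD(R)\rangle$ (as a subquotient of $\OO_{2R}=\pi^*\OO_R$), but they do not obviously yield $\zeta\otimes i_*G$ for an \emph{arbitrary} $G\in\DD(R)$, precisely because tensoring by the equivariant line bundle $\OO_X(-R)$ does not preserve the subcategory generated by $\pi^*\DD(Y)$. The paper avoids this problem entirely: it shows that the orthogonality condition $\Hom_{\Z_2}(F,\zeta\otimes i_*\DD(R))=0$ forces $\Z_2$ to act trivially on $Li^*F$, and then invokes the main theorem of T\'erouanne \cite{Ter}, a genuine descent result, to conclude $F\in\pi^*\DD(Y)$. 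That external input (or a proof of the equivalent descent statement) is the missing ingredient in your argument.
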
 
 
\begin{proof}
The case where $X$ and $Y$ are curves was considered in Theorem 1.2 of \cite{P-orbifold},
and the proof in our case is very similar. The fact that $\pi^*$ is fully faithful follows immediately
from the equality $(\pi_*\OO_X)^{\Z_2}=\OO_Y$ and the projection formula.
Similarly, to prove that $i_*$ is fully faithful it suffices to check $(Li^*i_*F)^{\Z_2}=F$. We have
a canonical exact triangle
$$F\otimes N^{\vee}[1]\to Li^*i_*F\to F\to\ldots$$
compatible with $\Z_2$-action, where $N^{\vee}=\OO_X(-R)|_R$ is the conormal bundle. 
It remains to observe that $\Z_2$ acts on $N^{\vee}$ by multiplication with $-1$.

Now let $F\in \DD(Y)$ and $G\in \DD(R)$ be some objects. Then we have
$$\Hom_{\Z_2}(\pi^*(F), \zeta\otimes i_*(G))\simeq \Hom_{\Z_2}(Lj^*F,\zeta\otimes G)=0$$
which gives one of the required orthogonality conditions. On the other hand,
by Serre duality, denoting $d=\dim X$, we get
$$\Hom_{\Z_2}(i_*(G),\pi^*(F))^*\simeq \Hom_{\Z_2}(\pi^*(F),\om_X\otimes i_*(G)[d])\simeq
\Hom_{\Z_2}(Lj^*F,i^*\om_X\otimes G[d]).$$
Note that $\Z_2$ acts nontrivially on $i^*\om_X\simeq \om_Y\otimes N^{\vee}$,
so the above Hom-space vanishes. 

Finally, we have to check that for every $F\in\DD_{\Z_2}(X)$ such that $\Hom_{\Z_2}(i_*\DD(R),F)=0$
or $\Hom_{\Z_2}(F,\zeta\otimes i_*\DD(R))=0$, 
lies in the essential image of $\pi^*:\DD(Y)\to\DD_{\Z_2}(X)$.
Note that by Serre duality, these two orthogonality conditions are equivalent. Assume that
$\Hom_{\Z_2}(F,\zeta\otimes i_*\DD(R))=0$. Then $\Z_2$ acts trivially on $i^*F$. Now the assertion
follows from the main theorem of \cite{Ter}.
\end{proof}

We can use the above Theorem as a setup for gluing stability conditions.
The situation seems to be especially nice when either $\DD(R)$ or $\DD(Y)$ admits an exceptional
collection (see Remark at the end of the previous section). The former possibiity occurs when
$X$ and $Y$ are curves and will be considered below. The latter possibility happens if, say, $Y$ is
a projective space. In particular, we derive the following result.

\begin{prop} Let $\pi:X\to\P^n$ be a smooth double covering ramified along a smooth hypersurface 
$j:R\hookrightarrow\P^n$. Assume we are given a reasonable stability $\si^R=(Z^R,P^R)$ on $\DD(R)$,
an $\Ext$-exceptional collection $(E_0,\ldots,E_n)$ on $\P^n$, and a set of vectors $v_0,\ldots,v_n$
in the upper half-plane such that $j^*E_i\in P^R(>1)$ for $i=0,\ldots,n$.
Then there exists a reasonable stability $\si=(Z,P)$ on $\DD_{\Z_2}(X)$ with
$$P(0,1]=[ i_*P^R(0,1], \pi^*E_0,\ldots,\pi^*E_n ],$$
$$Z(E)=v_0 x_0(R\pi_*(E(R))^{\Z_2})+\ldots+v_n x_n(R\pi_*(E(R))^{\Z_2})-
Z^R((i^*E\otimes N)^{\Z_2}),$$
where $x_0,\ldots,x_n:K_0(\P^n)\to\Z$ are the coordinates dual to the basis $([E_i])$.
\end{prop}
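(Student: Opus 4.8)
The plan is to obtain $\si$ by two successive gluings: first a stability condition on $\DD(\P^n)$ built from the $\Ext$-exceptional collection, then a gluing of it with $\si^R$ along the first semiorthogonal decomposition $\DD_{\Z_2}(X)=\lan\pi^*\DD(\P^n),i_*\DD(R)\ran$ of Theorem \ref{semiorth-thm}. For the first step: since $(E_0,\dots,E_n)$ is a full $\Ext$-exceptional collection, $H_{\P^n}=[E_0,\dots,E_n]$ is the heart of a bounded $t$-structure on $\DD(\P^n)$, and the stability function $Z_{\P^n}(F)=\sum_iv_ix_i([F])$ on it automatically has the Harder--Narasimhan property (as recalled after Proposition \ref{char-prop}); write $\si_{\P^n}=(Z_{\P^n},P_{\P^n})$ for the resulting stability condition. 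As every nonzero $F\in H_{\P^n}$ has $[F]=\sum_in_i[E_i]$ with $n_i\ge 0$ not all zero, and $\Im v_i>0$ for all $i$, we get $|Z_{\P^n}(F)|\ge\sum_in_i\Im v_i\ge\min_i\Im v_i>0$, so $\si_{\P^n}$ is reasonable; moreover every such $F$ has phase in $[\phi_{\min},\phi_{\max}]\sub(0,1)$, where $\phi_{\min}=\min_i\phi(v_i)$ and $\phi_{\max}=\max_i\phi(v_i)$. Transporting $\si_{\P^n}$ along the equivalence $\pi^*\colon\DD(\P^n)\xrightarrow{\sim}\DD_1:=\pi^*\DD(\P^n)$ produces a reasonable stability $\si_1=(Z_1,P_1)$ on $\DD_1$ with $P_1(0,1]=[\pi^*E_0,\dots,\pi^*E_n]$ and $Z_1(\pi^*F)=\sum_iv_ix_i([F])$; transporting $\si^R$ along $i_*\colon\DD(R)\xrightarrow{\sim}\DD_2:=i_*\DD(R)$ produces a reasonable stability $\si_2=(Z_2,P_2)$ on $\DD_2$ with $P_2(0,1]=i_*P^R(0,1]$ and $Z_2(i_*G)=Z^R([G])$.

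Next I would verify the hypotheses of Theorem \ref{reason-glue-thm} for the semiorthogonal decomposition $\DD_{\Z_2}(X)=\lan\DD_1,\DD_2\ran$. The decisive input, obtained exactly as in the proof of Theorem \ref{semiorth-thm} --- from $\pi\circ i=j$ (after identifying $R$ with the ramification divisor of $\pi$) and the fact that the canonical equivariant structure on $\pi^*F$ restricts to the trivial one over the fixed locus $R$ --- is the natural isomorphism
\[\Hom^k_{\DD_{\Z_2}(X)}(\pi^*F,i_*G)\simeq\Hom^k_R(j^*F,G)\qquad(F\in\DD(\P^n),\ G\in\DD(R)).\]
Since each $j^*E_i\in P^R(>1)$ and $P^R(>1)$ is extension-closed, we have $j^*F\in P^R(>1)$ for every $F\in H_{\P^n}$; hence for $G\in P^R(0,1]$ and $k\le 0$ the object $G[k]$ lies in $P^R(\le 1)$ and therefore $\Hom^k_R(j^*F,G)=0$. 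By the above isomorphism, passing to extensions in the first variable, this gives $\Hom^{\le 0}_{\DD_{\Z_2}(X)}(P_1(0,1],P_2(0,1])=0$, which is condition (1) of Theorem \ref{reason-glue-thm}.

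For condition (2) I would use that $\phi_{\max}<1$ to choose some $a\in(\phi_{\max},1)$. Then $P_1(a,a+1]=\pi^*H_{\P^n}[1]$ (since the only phases in $(a,a+1]$ at which $P_1$ is supported are those in $[\phi_{\min}+1,\phi_{\max}+1]$), so the required vanishing $\Hom^{\le 0}_{\DD_{\Z_2}(X)}(P_1(a,a+1],P_2(a,a+1])=0$ amounts to $\Hom^k_R(j^*F,G)=0$ for $F\in H_{\P^n}$, $G\in P^R(a,a+1]$ and $k\le -1$, which again holds because then $G[k]\in P^R(\le a)\sub P^R(\le 1)$ while $j^*F\in P^R(>1)$. (Alternatively, the single orthogonality hypothesis of Corollary \ref{glue-cor} holds for any $\eps>0$ smaller than both $1-\phi_{\max}$ and $\psi-1$, where $\psi$ is the least phase of an HN-factor of any $j^*E_i$.) Theorem \ref{reason-glue-thm} now yields a reasonable stability condition $\si=(Z,P)$ on $\DD_{\Z_2}(X)$ glued from $\si_1$ and $\si_2$; by \eqref{H-for2} its heart is $P(0,1]=\lan P_2(0,1],P_1(0,1]\ran=[\,i_*P^R(0,1],\pi^*E_0,\dots,\pi^*E_n\,]$, as claimed.

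It remains to identify the glued central charge $Z=Z_1\la_1+Z_2\rho_2$ of \eqref{Z-for}. Here $\la_1$ is (the corestriction to $\DD_1$ of) the left adjoint of $\pi^*\colon\DD(\P^n)\to\DD_{\Z_2}(X)$ and $\rho_2$ is (the corestriction to $\DD_2$ of) the right adjoint of $i_*\colon\DD(R)\to\DD_{\Z_2}(X)$. Using relative Grothendieck duality for the finite flat map $\pi$ and the ramification formula $\om_{X/\P^n}=\OO_X(R)$ (with $R$ the ramification divisor), one computes
\[\la_1(E)=\pi^*\bigl((R\pi_*(E(R)))^{\Z_2}\bigr),\qquad \rho_2(E)=i_*\bigl((i^*E\ot N)^{\Z_2}\bigr)[-1],\]
where $N=\OO_X(R)|_R$ is the normal bundle of $R$ in $X$, on which $\Z_2$ acts by $-1$ (as in the proof of Theorem \ref{semiorth-thm}), and the shift $[-1]$ comes from $i^!(-)=Li^*(-)\ot N[-1]$. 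Passing to $K_0$ and using $[E]=[\la_1(E)]+[\rho_2(E)]$, together with $Z_1(\pi^*F)=\sum_iv_ix_i([F])$ and $Z_2(i_*G)=Z^R([G])$, we obtain
\[Z(E)=\sum_iv_ix_i\bigl((R\pi_*(E(R)))^{\Z_2}\bigr)-Z^R\bigl((i^*E\ot N)^{\Z_2}\bigr),\]
as asserted. The main obstacle is precisely this last identification: one has to pin down the adjoint functors $\la_1,\rho_2$ carefully enough --- the appearance of the relative dualizing sheaf $\OO_X(R)$ in $\la_1$, of the normal-bundle twist in $\rho_2$, and above all of the cohomological shift responsible for the minus sign in front of $Z^R$ --- so that the abstract glued central charge matches the explicit formula in the statement; by contrast, the orthogonality checks above are routine once the $\Hom$-isomorphism and the bound $\phi_{\max}<1$ are available.
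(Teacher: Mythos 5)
Your proposal is correct and follows exactly the route the paper takes: gluing along the semiorthogonal decomposition $\DD_{\Z_2}(X)=\langle\pi^*\DD(\P^n),i_*\DD(R)\rangle$ via Theorem \ref{reason-glue-thm} with $a\in(\phi_{\max},1)$ (the second case of the Remark), and then identifying $\lambda_1(E)=\pi^*\bigl((R\pi_*(E(R)))^{\Z_2}\bigr)$ and $\rho_2(E)=i_*\bigl((i^*E\otimes N)^{\Z_2}\bigr)[-1]$ to read off the central charge. The only difference is that the paper's proof is a brief sketch, while you spell out the orthogonality verifications and the adjoint computations in full; these details are all accurate and consistent with the paper's implicit reasoning.
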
 

\begin{proof}
This stability is obtained by gluing with respect to the semiorthogonal decomposition
\begin{equation}\label{semiorth-dec}
\DD_{\Z_{2}}(X) = \langle \pi^{*}\DD(Y), i_{*}\DD(R) \rangle.
\end{equation}
It exists by Theorem \ref{reason-glue-thm}, where $a<1$ should be taken bigger than all of the phases
of the vectors $v_i$ (see Remark after Theorem \ref{reason-glue-thm}).
To get the formula for the central charge we note that for $E\in D_{\Z_2}(X)$ one has
$$\rho_2(E)=i^!(E)^{\Z_2}\simeq (i^*E\otimes N)^{\Z_2}[-1],$$
$$\la_1(E)=R\pi_*(E(R))^{\Z_2}.$$
\end{proof}

For example, if $X\to\P^3$ is a double covering ramified along a smooth surface $S\sub\P^3$
then we can consider stabilities on $S$ constructed in \cite{AB}. Choosing an appropriate
$\Ext$-exceptional collection on $\P^3$ and using the above result we get
examples of stabilities on $\DD_{\Z_2}(X)$. 

\section{Double coverings of curves}

In section we will consider the case when $X$ and $Y$ are curves. In this case
the ramification divisor $R$ consists of points $p_1,\ldots,p_n$, and the category
$\DD(R)$ is generated by the orthogonal exceptional objects $\OO_{p_1},\ldots,\OO_{p_n}$.
Recall that the category $\DD(X)$ has a standard stability condition $\si_{st}$ with
$Z_{st}=-\deg+i\rk$ and $P_{st}(0,1]=\Coh(X)$. There is an induced stability condition on
$\DD_{\Z_2}(X)$ with the heart $\Coh_{\Z_2}(X)$
that we still denote by $\si_{st}$ (see \cite{MMS}).

\begin{lem}\label{simple-obj-lem} 
Let $E$ be an endosimple object of the category $\DD_{\Z_2}(X)$ (i.e., $\Hom(E,E)=k$).
Then for some $n\in\Z$ the object $E[n]$ is one of the following types:
\begin{enumerate}
\item a vector bundle;
\item the sheaf $\OO_{\pi^{-1}(y)}$ for $y\in Y$;
\item the sheaf $\zeta\otimes\OO_{2p_i}$ for some $i\in\{1,\ldots,n\}$;
\item the sheaf $\OO_{p_i}$ for some $i$;
\item the sheaf $\zeta\otimes\OO_{p_i}$ for some $i$.
\end{enumerate}
\end{lem}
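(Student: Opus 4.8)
The plan is to classify endosimple objects $E \in \DD_{\Z_2}(X)$ by using the semiorthogonal decomposition $\DD_{\Z_2}(X) = \langle \pi^*\DD(Y), i_*\DD(R)\rangle$ of Theorem \ref{semiorth-thm}, together with the observation that the derived category of a smooth curve has cohomological dimension $1$, so every object is (non-canonically) a direct sum of shifts of its cohomology sheaves. First I would reduce to the case where $E$ is a sheaf, shifted into a single degree: since $\DD(Y)$ and $\DD(R)$ both have homological dimension $\le 1$, and the forgetful functor and its adjoints are exact, $\DD_{\Z_2}(X)$ also has homological dimension $\le 1$; hence if $E$ had cohomology in two different degrees it would decompose as a nontrivial direct sum and fail to be endosimple. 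So up to shift we may assume $E \in \Coh_{\Z_2}(X)$.

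Next I would split $E$ according to support. If $E$ is a torsion-free equivariant sheaf, I claim it is a vector bundle: $X$ is a smooth curve so torsion-free implies locally free, giving case (1). If $E$ has a nonzero torsion subsheaf $T \subsetneq E$ with torsion-free quotient, the sequence $0 \to T \to E \to E/T \to 0$ need not split as equivariant sheaves, but $\Hom(E,E) = k$ forces this to be impossible unless $T = 0$ or $T = E$ — more carefully, one shows that a torsion sheaf and a nonzero torsion-free sheaf on $X$ cannot be glued into an endosimple object, because the endomorphism algebra would contain the idempotent-free but still too-large subalgebra coming from $\Hom(E/T, E/T) \oplus \Hom(T,T)$. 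So $E$ is either locally free or torsion. For $E$ torsion, being endosimple forces $E$ to be supported at a single point $x \in X$ (otherwise it splits by support) and to be, as a non-equivariant sheaf, a successive extension concentrated at $x$; the equivariant endosimplicity then pins down the length and the equivariant structure.

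For the torsion case I would separate $x$ a non-ramification point from $x$ a ramification point $p_i$. If $x$ is not fixed by $\tau$, then an equivariant sheaf supported on the orbit $\{x, \tau x\}$ is determined by an ordinary sheaf at $x$, and endosimplicity forces it to be $\OO_{\pi^{-1}(y)}$ where $y = \pi(x)$, i.e. $\OO_x \oplus \OO_{\tau x}$ with the swap equivariant structure — case (2); longer length or a twist by $\zeta$ would enlarge $\Hom$. If $x = p_i$ is a ramification point, then $\Z_2$ acts on the local ring $\OO_{X,p_i}$ with the maximal ideal $\mathfrak{m}$ on which the nontrivial element acts by $-1$ (this is exactly the computation in the proof of Theorem \ref{semiorth-thm}, where $N^\vee$ carries the $-1$ action). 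An equivariant sheaf supported at $p_i$ is a module over $k[t]/(t^m)$ with a compatible $\Z_2$-grading where $t$ has degree $1$ (the nontrivial character); I would enumerate the indecomposable such modules with trivial automorphism group. Up to twist by $\zeta$ and shift, these are $\OO_{p_i} = k$ (case (4)), $\zeta \otimes \OO_{p_i}$ (case (5)), and $\OO_{2p_i} = k[t]/(t^2)$ or its twist $\zeta \otimes \OO_{2p_i}$ (case (3)); one checks $\OO_{2p_i}$ itself does not appear as a separate case because it is $\zeta^{\otimes 2} \otimes \OO_{2p_i} \cong \zeta \otimes (\zeta \otimes \OO_{2p_i})$ only after a shift bookkeeping — here I would be careful to present exactly the list in the statement, noting that $\OO_{2p_i}$ with trivial structure has $\Hom = k[t]/(t^2)$ of dimension $2$, hence is \emph{not} endosimple, which is why only the twisted version survives. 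Any $k[t]/(t^m)$-module with $m \ge 3$ has endomorphism algebra of dimension $\ge 2$, so the length is capped at $2$.

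The main obstacle I expect is the torsion-at-a-ramification-point analysis: carefully classifying the finite-length $\Z_2$-equivariant modules over the local ring at $p_i$ with one-dimensional endomorphism space, and matching the bookkeeping of $\zeta$-twists and shifts so that the output is precisely the five-item list (in particular explaining why $\OO_{2p_i}$ appears only in its $\zeta$-twisted form, while $\OO_{p_i}$ appears both twisted and untwisted). The locally-free versus torsion dichotomy and the non-fixed-point case are comparatively routine once homological dimension $\le 1$ is in hand.
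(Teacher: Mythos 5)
Your overall architecture (cohomological dimension $1$ to reduce to sheaves, split off the torsion subsheaf, analyse torsion by support, treat free orbits and fixed points separately) is exactly what the paper does. But two of your steps contain genuine errors.

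\textbf{The torsion/torsion-free split.} You write that the sequence $0 \to T \to E \to E/T \to 0$ ``need not split as equivariant sheaves'' and then propose an alternative argument via some ``idempotent-free but still too-large subalgebra'' of $\Hom(E,E)$. The first claim is false and the second doesn't work. On a smooth curve, $\Ext^{1}(V,T) = H^{1}(X, V^{\vee}\ot T) = 0$ whenever $V$ is locally free and $T$ is torsion, and taking $\Z_2$-invariants gives $\Ext^1_{\Z_2}(E/T,T)=0$. So the sequence \emph{does} split, $E=T\oplus E/T$ as equivariant sheaves, and indecomposability immediately forces $T=0$ or $T=E$. That is the entire argument; the ``too-large subalgebra'' heuristic has no content, because when the sequence doesn't split there is no reason $\Hom(T,T)\oplus\Hom(E/T,E/T)$ injects into $\Hom(E,E)$.

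\textbf{The $\OO_{2p_i}$ case.} This is the more serious error. You assert that ``$\OO_{2p_i}$ with trivial structure has $\Hom=k[t]/(t^2)$ of dimension $2$, hence is not endosimple.'' That is the \emph{non-equivariant} endomorphism algebra. Equivariantly, the nilpotent endomorphism is multiplication by the local parameter $t$, and $\tau(t)=-t$, so that endomorphism sits in the $\zeta$-isotypic component and is \emph{not} $\Z_2$-equivariant. Therefore $\Hom_{\Z_2}(\OO_{2p_i},\OO_{2p_i})=k$, and $\OO_{2p_i}$ \emph{is} endosimple. It does belong in the classification, and in fact it already appears in the statement: since $\pi$ is ramified at $p_i$, the scheme-theoretic fibre over $y=\pi(p_i)$ is $2p_i$, and $\pi^{*}\OO_{y}\simeq\OO_{2p_i}$ with exactly the trivial equivariant structure. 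So $\OO_{2p_i}=\OO_{\pi^{-1}(y)}$ with $y\in Y$ in the branch locus, which is case (2) of the Lemma. Your proposal misses this identification and, by wrongly discarding $\OO_{2p_i}$, arrives at a list that matches the statement only by accident — the reasoning would fail if pressed. The paper's own argument simply observes that for $m\ge 3$ the composite $\OO_{mp_i}\to\OO_{(m-2)p_i}\to\OO_{mp_i}$ (multiplication by $t^2$, which \emph{is} equivariant) is a non-scalar endomorphism, so only $m=1,2$ survive, with either equivariant structure; the case $\OO_{2p_i}$ is then absorbed into item (2) as above.

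The rest of your outline — reducing to a single cohomological degree, the free-orbit case giving $\OO_{\pi^{-1}(y)}$ for $y\notin\pi(R)$, and the cap at length $2$ at a ramification point — is sound and agrees with the paper.
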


\begin{proof}
The category $\Coh_{\Z_2}(X)$ has cohomological dimension $1$, so every
indecomposable object in $\DD_{\Z_2}(X)$ has only one nonzero cohomology.
Thus, we can assume that $E$ is a $\Z_2$-equivariant coherent sheaf.
Furthermore, since the torsion part of such a sheaf splits as a direct summand,
it is enough to consider the case when $E$ is an indecomposable torsion sheaf.
Then the support of $E$ is either $\pi^{-1}(y)$, where $y\in Y\setminus R$,
or $\{p_i\}$ for some $i\in\{1,\ldots,n\}$. In the former case $E\simeq\pi^*E'$,
where $E'$ is an endosimple sheaf on $Y$ supported at $y$, so $E'\simeq\OO_y$.
In the latter case there exists $m$ such that $E\simeq\OO_{mp_i}$ or
$E\simeq\zeta\otimes\OO_{mp_i}$. It remains to observe that for $m\geq 3$ the
sheaf $\OO_{mp_i}$ is not endosimple, since we can construct
its nonscalar endomorphism as the composition of natural maps
$$\OO_{mp_i}\to \OO_{(m-2)p_i}\to\OO_{mp_i}.$$
\end{proof}

We are going to construct explicitly some stability conditions on $\DD_{\Z_2}(X)$.
For this we will use a slight variation of the semiorthogonal decompositions considered in
Theorem \ref{semiorth-thm}.
Namely, for every partition of $\{1,\ldots,n\}$ into two disjoint subset $I$ and $J$ we have
\begin{equation}\label{semiorth-dec2}
\DD_{\Z_{2}}(X) = \lan\lan\zeta\otimes\OO_{p_j}\ |\ j\in J\ran, \pi^{*}\DD(Y), 
\lan \OO_{p_i}\ |\ i\in I\ran\ran.
\end{equation}

For a subset $I\sub\{1,\ldots,n\}$ let us denote by $\DD(I)\sub\DD_{\Z_2}(X)$ the full triangulated
subcategory generated by $\pi^*\DD(Y)$ and $\OO_{p_i}$ with $i\in I$.

\begin{lem}\label{coh-lem} 
For $I\sub\{1,\ldots,n\}$ set $\Coh(I):=\Coh_{\Z_2}(X)\cap\DD(I)$. Then $\Coh(I)$
is the heart of a $t$-structure on $\DD(I)$. The natural exact functor $\Coh(I)\to\Coh_{\Z_2}(X)$ 
gives an equivalence of $\Coh(I)$ with the full subcategory of $\Coh_{\Z_2}(X)$ consisting of
all successive extensions of sheaves 
in $\pi^*\Coh(Y)$ and equivariant sheaves supported on $\{p_i \ |\ i\in I\}$.
The category $\Coh(I)$ is Noetherian.
\end{lem}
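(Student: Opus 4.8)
\textbf{Proof plan for Lemma \ref{coh-lem}.}

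The plan is to realize $\Coh(I)$ as a heart obtained by gluing from the semiorthogonal decomposition of $\DD(I)$ analogous to \eqref{semiorth-dec2}, and to identify it concretely as a subcategory of $\Coh_{\Z_2}(X)$. First I would observe that $\DD(I)$ itself has a semiorthogonal decomposition $\DD(I) = \lan \pi^*\DD(Y), \lan\OO_{p_i}\ |\ i\in I\ran\ran$: this is inherited from \eqref{semiorth-dec2} by taking $J=\{1,\ldots,n\}\setminus I$ and noting that $\DD(I)$ is, by definition, the triangulated subcategory generated by the two right-hand blocks, which are already semiorthogonal inside $\DD_{\Z_2}(X)$. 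The block $\lan\OO_{p_i}\ |\ i\in I\ran$ is equivalent to $\DD(R_I)$ where $R_I=\{p_i\ |\ i\in I\}$, and carries the standard $t$-structure with heart $\lan\OO_{p_i}\ |\ i\in I\ran$ (finite-dimensional $k[R_I]$-modules); the block $\pi^*\DD(Y)$ carries the $t$-structure with heart $\pi^*\Coh(Y)$. The orthogonality $\Hom^{\le 0}(\pi^*\Coh(Y), \lan\OO_{p_i}\ran)=0$ needed for gluing follows from the computation in Theorem \ref{semiorth-thm}: $\Hom^{\le 0}_{\Z_2}(\pi^*F, i_*G)$ vanishes (for $G$ supported on $R_I$) because the relevant adjunction gives $\Hom_{\Z_2}(Lj^*F, \om_Y\otimes N^\vee\otimes G[d])$ on the Serre-dual side with $\Z_2$ acting nontrivially on $N^\vee$, and in degrees $\le 0$ there is nothing. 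Hence by the gluing Lemma in Section 2 there is a glued heart $H_I\sub\DD(I)$, and I claim $H_I=\Coh(I)$.

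The identification $H_I=\Coh(I)$ proceeds in both directions. For $H_I\sub\Coh_{\Z_2}(X)$: every object of $H_I$ is, by \eqref{H1-H2-seq}, an extension of $\la_1$ of it (in $\pi^*\Coh(Y)\sub\Coh_{\Z_2}(X)$) by $\rho_2$ of it (an equivariant sheaf supported on $R_I$, again in $\Coh_{\Z_2}(X)$), so $H_I\sub\Coh_{\Z_2}(X)$; since also $H_I\sub\DD(I)$ by construction, we get $H_I\sub\Coh(I)$. Conversely, any object $E\in\Coh(I)$ lies in $\DD(I)$, so it has a triangle $E_2\to E\to E_1$ with $E_1\in\pi^*\DD(Y)$, $E_2\in\lan\OO_{p_i}\ran$; one checks using the cohomological-dimension-$1$ structure of $\Coh_{\Z_2}(X)$ (as in Lemma \ref{simple-obj-lem}) and the long exact cohomology sequence that $E_1$ and $E_2$ have cohomology only in degree $0$, i.e.\ $E_1\in\pi^*\Coh(Y)$ and $E_2$ is an honest equivariant sheaf on $R_I$, whence $E\in H_I$. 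This simultaneously gives the stated description of $\Coh(I)$ as the successive extensions of sheaves in $\pi^*\Coh(Y)$ by equivariant sheaves supported on $R_I$, and the fact that the functor $\Coh(I)\to\Coh_{\Z_2}(X)$ is fully faithful and exact follows because it is the inclusion of an extension-closed abelian subcategory that is closed under subobjects and quotients (the latter because $\pi^*\Coh(Y)$ is closed under subobjects in $\Coh_{\Z_2}(X)$ — any subsheaf of $\pi^*F$ is again a pullback, as $\Z_2$ acts trivially on it — and torsion supported at the $p_i$ with nontrivial character cannot map into or out of these).

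Finally, Noetherianity of $\Coh(I)$: since $\Coh(I)$ is glued from $\pi^*\Coh(Y)$ and the finite-length category $\lan\OO_{p_i}\ran$, and $\pi^*\Coh(Y)\simeq\Coh(Y)$ is Noetherian ($Y$ being a smooth projective curve), I would invoke the exact functors $\la_1:\Coh(I)\to\pi^*\Coh(Y)$ and $\rho_2:\Coh(I)\to\lan\OO_{p_i}\ran$: given an ascending chain $E^{(1)}\sub E^{(2)}\sub\cdots$ in $\Coh(I)$, the chain of images under $\la_1$ stabilizes by Noetherianity of $\Coh(Y)$, and then the kernels, lying in $\lan\OO_{p_i}\ran$, form an ascending chain in a finite-length category and also stabilize; a short diagram chase with the snake lemma then forces the original chain to stabilize. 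The main obstacle I anticipate is not any single step but the bookkeeping in the identification $H_I=\Coh(I)$ — specifically verifying that the semiorthogonal triangle of an object of $\Coh(I)$ inside $\DD(I)$ has its two terms concentrated in cohomological degree $0$; this is where one genuinely uses that $\Coh_{\Z_2}(X)$ has cohomological dimension $1$ together with the precise orthogonality relations between $\pi^*\Coh(Y)$ and the torsion sheaves at the $p_i$, and one must be a little careful because $\OO_{p_i}$ (not twisted by $\zeta$) is the relevant block here, so the sign of the $\Z_2$-action on the conormal bundle enters in exactly the right way.
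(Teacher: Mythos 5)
Your plan to realize $\Coh(I)$ as a glued heart runs into a fatal obstruction already at the orthogonality hypothesis. The gluing lemma of Section 2 requires $\Hom_\DD^{\le 0}(H_1,H_2)=0$; with your choice $H_1=\pi^*\Coh(Y)$ and $H_2=\lan\OO_{p_i}\mid i\in I\ran_{\mathrm{ab}}$ this asks in particular that $\Hom^0(\pi^*F,\OO_{p_i})=0$ for all $F\in\Coh(Y)$, and this is simply false: the equivariant surjection $\OO_X\twoheadrightarrow\OO_{p_i}$ already gives a nonzero element of $\Hom^0(\pi^*\OO_Y,\OO_{p_i})$. Your justification via Theorem \ref{semiorth-thm} reverses the direction of the Hom: the Serre-duality computation there proves $\Hom_{\Z_2}(i_*G,\pi^*F)=0$, i.e.\ the semiorthogonality $\Hom(\DD_2,\DD_1)=0$, not the gluing condition $\Hom^{\le 0}(H_1,H_2)=0$ in the opposite direction. (This is precisely why Proposition \ref{cover-prop} works with the shifted objects $\OO_{p_i}[-n_i]$, $n_i\ge 1$: the shift is what makes the orthogonality true.)

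Even setting aside the orthogonality failure, the intended identification $H_I=\Coh(I)$ could not hold. A glued heart in the sense of (\ref{H-for2}) and (\ref{H1-H2-seq}) has a canonical short exact sequence $0\to\rho_2(E)\to E\to\lambda_1(E)\to 0$ with the \emph{subobject} $\rho_2(E)$ a torsion sheaf supported on $R_I$ and the quotient a pullback. But $\Coh(I)$ contains many vector bundles that are not pullbacks, and a vector bundle has no nonzero torsion subsheaf; as the paper explains, such $\FF$ sit the other way around, as $0\to\pi^*V\to\FF\to T\to 0$ with $T$ supported on $R_I$. So $\Coh(I)$ is not the extension-closure $\lan H_2,H_1\ran$ in this SOD-ordered sense. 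Your Noetherianity argument via exact functors $\lambda_1,\rho_2:\Coh(I)\to H_i$ collapses with it, since those functors (with target the standard hearts) do not exist on $\Coh(I)$. Also, the auxiliary claim that $\Coh(I)$ is closed under subobjects in $\Coh_{\Z_2}(X)$ is false: for $i\notin I$ one has $\zeta\ot\OO_{p_i}\sub\OO_{2p_i}=\pi^*\OO_{\pi(p_i)}\in\Coh(I)$, yet $\Hom^1(\OO_{p_i},\zeta\ot\OO_{p_i})\ne 0$, so $\zeta\ot\OO_{p_i}\notin\Coh(I)$.

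The paper's actual argument is much more elementary and avoids gluing entirely. One observes $\DD(I)=\{E:\Hom^*(\OO_{p_i},E)=0\text{ for }i\notin I\}$, and since $\Coh_{\Z_2}(X)$ has cohomological dimension $1$ every $E$ splits as $\bigoplus H^iE[-i]$; hence $E\in\DD(I)$ iff all $H^iE\in\DD(I)$, and the standard $t$-structure simply restricts to $\DD(I)$ with heart $\Coh(I)$. Noetherianity is immediate from the resulting exact embedding $\Coh(I)\hookrightarrow\Coh_{\Z_2}(X)$ (an ascending chain of subobjects inside $\Coh(I)$ is one inside $\Coh_{\Z_2}(X)$). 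The description of objects of $\Coh(I)$ is obtained by splitting into torsion and torsion-free parts and analyzing each directly (elementary transformations for the vector bundle part, and the observation that an indecomposable torsion sheaf supported at $p_i$ with $i\notin I$ and no $\Hom^*(\OO_{p_i},-)$ must be $\OO_{2np_i}=\pi^*\OO_{n\pi(p_i)}$).
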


\begin{proof} Note that an object $E\in\DD_{\Z_2}(X)$ belongs to $\DD(I)$ if and only if
$\Hom^*(\OO_{p_i},E)=0$ for each $i\not\in I$. Since the category $\Coh_{\Z_2}(X)$ has
cohomological dimension $1$, we have $E\simeq\oplus H^iE[-i]$, where $H^iE\in\Coh_{\Z_2}(X)$.
Therefore, $E\in\DD(I)$ if and only if $H^iE\in\DD(I)$ for every $i$. This immediately implies that
the standard $t$-structure restricts to a $t$-structure on $\DD(I)$ with $\Coh(I)$ as the heart.
We have an exact embedding $\Coh(I)\to\Coh_{\Z_2}(X)$, so $\Coh(I)$ is Noetherian.
Let $\FF\in\Coh(I)$. Then the torsion part (resp., torsion-free part) of $\FF$ is also in $\Coh(I)$.
Assume first that $\FF$ is an indecomposable torsion sheaf with the support at $p_i$ for
$i\not\in I$. Then the condition
$\Hom^*(\OO_{p_i},E)=0$ easily implies that $E\simeq\OO_{2np_i}$.
On the other hand, if $\FF$ is a vector bundle then we have $\Hom(\FF,\zeta\ot\OO_{p_i})=0$
for $i\not\in I$, which implies that the fiber of $\FF$ at $p_i$ has trivial $\Z_2$-action for
$i\not\in I$. Therefore, making appropriate elementary transformations at $p_i$ for $i\in I$
we can represent $\FF$ as an extension of a sheaf supported at $\{p_i \ |\ i\in I\}$ by
the pull-back of a vector bundle from $Y$ (cf. proof of Theorem 1.8 of \cite{P-orbifold}).
\end{proof}

Given a partition of $\{1,\ldots,n\}$ into three disjoint subsets $I^0$, $I^+$ and $I^-$
we obtain from \eqref{semiorth-dec2} a semiorthogonal decomposition
\begin{equation}\label{semiorth-dec3}
\DD_{\Z_2}(X)=\lan \lan \zeta\otimes\OO_{p_i}\ |\ i\in I^-\ran, \DD(I^0),
\lan \OO_{p_i}, i\in I^+\ran\ran.
\end{equation}

\begin{prop}\label{cover-prop} Fix a partition $\{1,\ldots,n\}=I^0\sqcup I^+\sqcup I^-$ and
a collection of positive integers $(n_i)$ for $i\not\in I^0$.

\noindent
(a) Let $Z:\NN(\DD_{\Z_2}(X))\to\C$ be a homomorphism, such that 
\begin{enumerate}
\item
$\Im Z(\OO_X)>0$, and
$Z(\OO_{\pi^{-1}(y)})\in \R_{<0}$ for any point $y\in Y$;
\item
$Z(\OO_{p_i}[-n_i])\in\hh$ for $i\in I^+$, and $Z(\zeta\otimes\OO_{p_i}[n_i])\in\hh$ for $i\in I^-$;
\item
$Z(\OO_{p_i})\in \R_{<0}$ and $Z(\zeta\otimes\OO_{p_i})\in\R_{<0}$ for $i\in I^0$, 
\end{enumerate}
where $\hh\sub\C$ denotes the union of the upper half-plane with $\R_{<0}$.
Then there exists a reasonable stability condition $\sigma$ with the central charge
$Z$ and the heart 
\begin{equation}\label{glued-heart-eq}
H(I^+,I^-;\nn)=[ [\zeta\otimes\OO_{p_i}[n_i]\ |\ i\in I^-], \Coh(I^0),
[ \OO_{p_i}[-n_i], i\in I^+] ],
\end{equation}
which is glued with respect to the semiorthogonal decomposition \eqref{semiorth-dec3}.
All the objects $\OO_{\pi^{-1}(y)}$ for $y\in Y$ are $\sigma$-semistable (of phase $1$). The objects
$\OO_{\pi^{-1}(y)}$ for $y\in Y\setminus \{p_i\ |\ i\in I^0\}$, as well as
$\OO_{p_i}$ for $i\in I^0\cup I^+$ and $\zeta\otimes\OO_{p_i}$ for $i\in I^0\cup I^-$,
are $\sigma$-stable.

\noindent
(b) Assume in addition that $n_i=1$ for all $i\not\in I^0$. Then 
all the objects $\OO_{p_i}$ and $\zeta\otimes\OO_{p_i}$ for $i\in\{1,\ldots,n\}$ are $\sigma$-stable.
\end{prop}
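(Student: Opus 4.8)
\emph{Proof proposal.} The plan is to reduce, using part (a), to just two families of objects and then to locate them in the slicing by means of the exact sequences carried by the ramification points. Part (a) already gives that every $\OO_{\pi^{-1}(y)}$ is $\sigma$-semistable of phase $1$ and is in fact $\sigma$-stable as soon as $y$ is not one of the branch points indexed by $I^0$; in particular, writing $q_i:=\OO_{\pi^{-1}(p_i)}=\OO_{2p_i}$, the object $q_i$ is $\sigma$-stable of phase $1$ for every $i\notin I^0$. Part (a) also gives $\sigma$-stability of $\OO_{p_i}$ for $i\in I^0\cup I^+$ and of $\zeta\otimes\OO_{p_i}$ for $i\in I^0\cup I^-$. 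Hence it only remains to prove that $\zeta\otimes\OO_{p_i}$ is $\sigma$-stable for $i\in I^+$ and that $\OO_{p_i}$ is $\sigma$-stable for $i\in I^-$.

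First I would bring in the canonical exact sequences in $\Coh_{\Z_2}(X)$ coming from the fact that $\Z_2$ acts by $-1$ on the conormal bundle of a ramification point (exactly as in the proof of Theorem \ref{semiorth-thm}): namely $0\to\zeta\otimes\OO_{p_i}\to q_i\to\OO_{p_i}\to 0$ and, tensoring with $\zeta$, $0\to\OO_{p_i}\to\zeta\otimes q_i\to\zeta\otimes\OO_{p_i}\to 0$. Because $n_i=1$, the generators $\OO_{p_i}[-1]$ $(i\in I^+)$ and $\zeta\otimes\OO_{p_i}[1]$ $(i\in I^-)$ of the one-object blocks of \eqref{semiorth-dec3} lie in the glued heart $H$, and $q_i\in\Coh(I^0)\subset H$ for every $i$ (using $\Hom^{\bullet}_{\Z_2}(\OO_{p_i},q_i)=0$). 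Passing to the long exact sequence of $H$-cohomology of the distinguished triangle attached to the appropriate one of these two sequences, I expect to get that $\zeta\otimes\OO_{p_i}$ $(i\in I^+)$, resp.\ $\OO_{p_i}$ $(i\in I^-)$, already lies in $H$, sitting in a two-step filtration in $H$ whose factors are the $\sigma$-stable objects $\OO_{p_i}[-1]$ and $q_i$, resp.\ $q_i$ and $\zeta\otimes\OO_{p_i}[1]$. This is precisely where $n_i=1$ is essential: for $n_i\ge 2$ the shifted generator is too far from $q_i$ in the slicing for the extension to remain in $H$, and the object ceases to be semistable.

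Having placed these length-two objects in $H$, I would finish by a phase computation. Using $Z=Z_1\lambda_1+Z_2\rho_2$ and hypotheses (1)--(2), the central charge of the object is the sum of $Z(q_i)\in\R_{<0}$ and the central charge of the shifted generator, which lies in $\hh$; so the phase of the object lies strictly between the phase of the generator and $1$, which (for $i\in I^+$) forces the nontrivial subobject $\OO_{p_i}[-1]$ to have phase strictly smaller than the object, giving not merely semistability but stability. To exclude any other destabilizing subobject one invokes that the object is endo-simple (Lemma \ref{simple-obj-lem}) together with the fact that its two Jordan--H\"older factors are non-isomorphic with $\Hom$ running in a single direction -- a one-line computation with skyscrapers at $p_i$, e.g.\ $\Hom_{\Z_2}(q_i,\zeta\otimes\OO_{p_i})=0$ and $\Ext^{1}_{\Z_2}(\OO_{p_i},q_i)=0$. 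The case $i\in I^-$ I would treat by the symmetric argument, or deduce it from the case $i\in I^+$ via the autoequivalence $F\mapsto\zeta\otimes F$, which interchanges the two semiorthogonal decompositions of Theorem \ref{semiorth-thm}. \emph{The hard part} will be the bookkeeping in this last step: checking that $n_i=1$ really forces the strict phase inequality, and choosing, in each of the two cases, which of the two exact sequences (equivalently, which $\Z_2$-equivariant structure on $\OO_{2p_i}$) realizes the object inside $H$ with its factors in the order dictated by the torsion structure of the glued heart.
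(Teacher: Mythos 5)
Your plan matches the paper's up to a point: you correctly place $E=\zeta\otimes\OO_{p_i}$ (for $i\in I^+$, $n_i=1$) inside the glued heart $H$ via the triangle $\OO_{p_i}[-1]\to\zeta\otimes\OO_{p_i}\to\OO_{2p_i}\to\OO_{p_i}$ coming from the $\Z_2$-action by $-1$ on the conormal bundle, you correctly observe that $\OO_{2p_i}\in\Coh(I^0)$, and the phase computation (that $Z(E)=Z(\OO_{2p_i})+Z(\OO_{p_i}[-1])$ has phase strictly between $\phi(\OO_{p_i}[-1])$ and $1$, forcing $\phi(\OO_{p_i}[-1])<\phi(E)$) is correct. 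The treatment of $i\in I^-$ by the twist $\ot\zeta$ and your remark about why $n_i=1$ is required are both sound.

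However, there is a genuine gap in the final step. You reduce stability of $E$ to the claim that every nonzero proper subobject of $E$ in $H$ has strictly smaller phase, but you only verify this for the single subobject $\OO_{p_i}[-1]$ visible in your two-step filtration, and then claim one can ``exclude any other destabilizing subobject'' by endosimplicity plus a pair of $\Hom$/$\Ext^1$ vanishings. That is not a proof: endosimplicity of $E$ in $\Coh_{\Z_2}(X)$ does not yield that $E$ has length two in $H$ (indeed the top factor $\OO_{2p_i}$ is not simple in $H$), and the two cited vanishings do not by themselves constrain an arbitrary short exact sequence $0\to A\to E\to B\to 0$ in $H$. The paper's proof of (b) is precisely devoted to this point. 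It establishes structural constraints on the standard $\Coh_{\Z_2}(X)$-cohomology of objects of $H$: writing $\CC^+=[\OO_{p_i}\mid i\in I^+]$ and $\CC^-=[\zeta\ot\OO_{p_i}\mid i\in I^-]$, one has $H^{-1}C\in\CC^-$, $H^1C\in\CC^+$, and $\Hom(\CC^+,H^0C)=\Hom(H^0C,\CC^-)=0$ for every $C\in H$. Given $0\to A\to E\to B\to 0$ in $H$ with $A,B\neq 0$, one takes the five-term exact sequence $0\to H^{-1}B\to H^0A\to E\to H^0B\to H^1A\to 0$ in $\Coh_{\Z_2}(X)$ and uses that $E$ is \emph{simple as a sheaf} to split into two cases (the map $H^0A\to E$ is either $0$ or surjective); in each case the listed structural constraints, together with the non-splitting of the extension defining $\OO_{2p_i}$, force $A\simeq\OO_{p_i}[-1]$ and $B\simeq\OO_{2p_i}$. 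This cohomological case analysis is the heart of the argument, and it is absent from your proposal. Without it you have not ruled out a destabilizing subobject of phase $\geq\phi(E)$, so even semistability is not established.
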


\begin{proof} (a) Using the orthogonalities
$$\Hom^{\leq 0}(\Coh(I^0),\OO_{p_i}[-n_i])=\Hom^{\leq 0}(\zeta\otimes\OO_{p_j}[n_j],\Coh(I^0))=
\Hom^{\leq 0}(\zeta\otimes\OO_{p_j}[n_j],\OO_{p_i}[-n_i])$$
for $i\in I^+$, $j\in I^-$, we get the glued heart $H=H(I^+,I^-;\nn)$ given by
\eqref{glued-heart-eq}.
Note that the restriction of $Z$ to $\NN(\pi^*\DD(Y))$ is determined by $Z(\OO_X)$ and 
by $Z(\OO_{\pi^{-1}(y)})$ for a point $y\in Y$. Thus, $\Im Z(\pi^*F)=c\rk(F)$ for
some positive constant $c$. Since $\Coh(I^0)$ is generated by extensions from $\pi^*\Coh(Y)$
and $\OO_{p_i}$ and $\zeta\otimes\OO_{p_i}$ for $i\in I^0$, we deduce that
$Z$ is a stability function on $H$. It is also easy to see that $0$ is an
isolated point of $\Im Z(H)$. Since $H$ is glued from Noetherian
hearts, it is also Noetherian, so Lemma \ref{Noeth-lem}(a) implies that 
the Harder-Narasimhan property is satisfied for $Z$. Thus, we have a stability condition 
$\sigma=(Z,P)$ with $P(0,1]=H$. By Proposition \ref{char-prop}(2), it is glued from
the induced stability on $\DD(I^0)$ and the exceptional objects
$\zeta\otimes\OO_{p_i}[n_i]$, $i\in I^-$ and $\OO_{p_i}[-n_i]$, $i\in I^+$. The fact that
$\sigma$ is reasonable follows from Lemma \ref{Noeth-lem}(b).
Note that $P(1)\sub H$ consists of successive extensions of sheaves of the form $\OO_{\pi^{-1}(y)}$,
$y\in Y$, and of $\OO_{p_i}$ and $\zeta\otimes\OO_{p_i}$ for $i\in I^0$.
The simple objects in $P(1)$ are the
sheaves $\OO_{\pi^{-1}(y)}$, $y\in Y\setminus\{p_i\ |\ i\in I^0\}$, and
$\OO_{p_i}$ and $\zeta\otimes\OO_{p_i}$
for $i\in I^0$, so all these objects are $\sigma$-stable. On the other hand,
Proposition \ref{char-prop}(iii) implies
that the above exceptional objects in the heart corresponding
to $i\in I^+\cup I^-$, are $\sigma$-stable. 

\medskip

\noindent (b) Let us denote 
$$\CC^+:=[\OO_{p_i}\ |\ i\in I^+]\sub\Coh_{\Z_2}(X),$$
$$\CC^-:=[\zeta\otimes\OO_{p_i}\ |\ i\in I^-]\sub\Coh_{\Z_2}(X).$$
From the definition of $H$ one can easily deduce that for every object $C\in H$ one has
\begin{align*}
& H^{-1}C\in\CC^-;
& H^1C\in\CC^+;
& H^0C\simeq H^0(F_{-1}\to F_0\to F_1), \text{where } F_0\in\Coh(I^0), F_{-1}\in\CC^-, F_1\in\CC^+.
\end{align*}
The last condition easily implies that $\Hom(\CC^+,H^0C)=\Hom(H^0C,\CC^-)=0$.

Now let us fix $i\in I^+$ and consider the object $E=\zeta\otimes\OO_{p_{i}}$.
Note that
$\zeta\otimes\OO_{p_i}$ belongs to $H$, as an extension of $\OO_{2p_i}$ by $\OO_{p_i}[-1]$.
Suppose we have
a short exact sequence 
$$0\to A\to E\to B\to 0$$
in $H$ with nonzero $A$ and $B$. 
Since $H^2A=H^{-2}B=0$, we derive that $H^1B=H^{-1}A=0$ and there is an exact sequence
\begin{equation}\label{long-ex-seq}
0\to H^{-1}B\to H^0A\to E\to H^0B\to H^1A\to 0
\end{equation}
in $\Coh_{\Z_2}(X)$. 
Note that since $E$ is a simple object of $\Coh_{\Z_2}(X)$ we have one of the following two cases:
(i) $H^0B\to H^1A$ is an isomorphism; (ii) $H^{-1}B\to H^0A$ is an isomorphism.
In the first case we obtain that $H^0B\in\CC^+$ which implies that $H^0B=0$.
Hence, in this case $B\in\CC^-[1]$, so $\Hom(E,B)=0$ which is a contradiction.
Now let us consider case (ii). We have $H^0A\in\CC^-$, hence $H^0A=0$. It follows that
$A=H^1A[-1]$, and $B=H^0B$ is an extension of $H^1A$ by $E$.
Since $\Hom(H^1A,B)=0$, this extension cannot split on any direct summands of 
$H^1A$, which implies that
$A\simeq\OO_{p_i}[-1]$ and $B\simeq \OO_{2p_i}$.
Since $Z(\OO_{p_i}[-1])$ has smaller phase then 
$Z(E)$, this shows that $\zeta\otimes\OO_{p_i}$ is stable.
Similarly one proves that all the objects $\OO_{p_i}$ for $i\in I^-$ are stable.
\end{proof}

In the case when all $n_i$'s are equal to $1$,
we denote the heart $H(I^+,I^-,\nn)$ considered in the above Proposition simply by
$H(I^+,I^-)$.

We have the following partial characterization of 
stability conditions constructed above. 

\begin{lem}\label{phase-lem}
Let $\sigma=(Z,P)$ be a stability condition such that
$\OO_{\pi^{-1}(y)}\in P(1)$ for all $y\in Y\setminus R$.

\noindent
(a) Assume that $\OO_{2p_i}\in P(1)$ for all $i$, and for every $i$
one of the following three conditions holds:
\begin{enumerate}
\item both $\OO_{p_i}$ and $\zeta\otimes \OO_{p_i}$ are $\sigma$-semistable of phase $1$;
\item $\OO_{p_i}$ is $\sigma$-semistable of phase $>1$;
\item $\zeta\otimes\OO_{p_i}$ is $\sigma$-semistable of phase $\leq 0$. 
\end{enumerate}
Assume in addition that for every line bundle $L$ on $Y$ one has $\pi^*L\in P(0,1]$.
Then $\sigma$ coincides
with one of the stability conditions constructed in Proposition \ref{cover-prop}.
The latter condition is uniquely determined by $Z$ and by the phases of $\OO_{p_i}$ and
$\zeta\otimes\OO_{p_i}$ for $i\in\{1,\ldots,n\}$.

\noindent
(b) Now assume that $\sigma$ is locally finite, and for all $i\in\{1,\ldots,n\}$ one has
$\OO_{p_i}\in P[1,2)$ and $\zeta\otimes\OO_{p_i}\in P(0,1]$.  Assume in addition that
either all objects $\OO_{\pi^{-1}(y)}$ for $y\in Y\setminus R$ are stable, or
$\Im Z(V)>0$ for every $\Z_2$-equivariant vector bundle $V$.
Then $\sigma$ coincides with one of stability conditions
constructed in Proposition \ref{cover-prop} with $I^-=\emptyset$,
$$I^+=\{ i\ |\ \Im Z(\OO_{p_i})<0\},$$
and all $n_i$'s equal to $1$. 
\end{lem}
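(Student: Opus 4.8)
The plan is to treat part (a) and part (b) in parallel, the key idea in each case being to reconstruct the partition $\{1,\dots,n\}=I^0\sqcup I^+\sqcup I^-$ from the phases of $\OO_{p_i}$ and $\zeta\otimes\OO_{p_i}$, verify that $Z$ satisfies the hypotheses of Proposition \ref{cover-prop}, and then show that the heart $P(0,1]$ of $\sigma$ coincides with the glued heart $H(I^+,I^-;\nn)$, after which uniqueness of the glued stability condition forces $\sigma$ to be the one from Proposition \ref{cover-prop}.

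For part (a): I would first define $I^+$, $I^-$, $I^0$ according to which of the three alternatives holds for each $i$ (case (1) gives $i\in I^0$, case (2) gives $i\in I^+$, case (3) gives $i\in I^-$), and set all $n_i=1$. The conditions $\OO_{2p_i}\in P(1)$ together with $\OO_{p_i}$ of phase $>1$ in case (2) force, via the exact triangle relating $\OO_{p_i}$, $\OO_{2p_i}$ and $\zeta\otimes\OO_{p_i}$, that $\zeta\otimes\OO_{p_i}[1]\in P(0,1]$ and that $Z(\OO_{p_i}[-1])\in\hh$; symmetrically in case (3). In case (1) the hypotheses directly give $Z(\OO_{p_i}), Z(\zeta\otimes\OO_{p_i})\in\R_{<0}$. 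Since all $\OO_{\pi^{-1}(y)}$ lie in $P(1)$ we get $Z(\OO_{\pi^{-1}(y)})\in\R_{<0}$, and $\Im Z(\OO_X)>0$ follows from $\pi^*L\in P(0,1]$ for line bundles $L$ of large degree (otherwise $\pi^*L$ would have nonpositive imaginary part of its charge while being a subobject of nothing forcing it out of the heart) — more carefully, $\Im Z(\OO_X)$ is read off from $\Im Z(\pi^*L)=\Im Z(\OO_X)\cdot\rk$. Thus $Z$ satisfies conditions (1)–(3) of Proposition \ref{cover-prop}(a), which produces a reasonable stability condition $\sigma'=(Z,P')$ with heart $H(I^+,I^-)$ and the same central charge $Z$ as $\sigma$. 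To conclude $\sigma=\sigma'$ I would use Proposition \ref{FormalDistance}(a): it suffices to check $P(0,1]\subset P'(-1,2]$. This I would verify by showing every simple object of $\Coh_{\Z_2}(X)$ lands in the right range for both slicings — the list from Lemma \ref{simple-obj-lem} — using the hypothesis $\pi^*L\in P(0,1]$ for all line bundles $L$ to control the vector bundle case (any vector bundle is, up to the glued torsion-pair structure, an extension of $\pi^*$-bundles and torsion at the $p_i$).

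For part (b): the hypotheses $\OO_{p_i}\in P[1,2)$ and $\zeta\otimes\OO_{p_i}\in P(0,1]$ mean every $i$ falls under case (1) or (2) of part (a) with $I^-=\emptyset$; the distinction is detected by the sign of $\Im Z(\OO_{p_i})$, which is why $I^+=\{i : \Im Z(\OO_{p_i})<0\}$ and $I^0$ is its complement. The extra input needed is that $\OO_{2p_i}\in P(1)$ and that $\pi^*L\in P(0,1]$ for line bundles $L$; the former follows from $\OO_{2p_i}$ being an extension of $\OO_{p_i}\in P[1,2)$ by $\zeta\otimes\OO_{p_i}[-1]$... actually one argues that $\OO_{2p_i}$ has charge in $\R_{<0}$ (as $[\OO_{2p_i}]=[\OO_{p_i}]+[\zeta\otimes\OO_{p_i}]$ and both summands have charge in $\hh$ with, in the relevant cases, imaginary parts cancelling) and is semistable, hence of phase $1$. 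The additional dichotomy — either all $\OO_{\pi^{-1}(y)}$ are stable, or $\Im Z(V)>0$ for every equivariant vector bundle $V$ — is precisely what is needed to force $\pi^*L\in P(0,1]$: in the first alternative, local finiteness plus stability of all the $\OO_{\pi^{-1}(y)}$ pins down $P(1)$ and then one bootstraps to vector bundles; in the second, $\Im Z(V)>0$ directly places every equivariant vector bundle with no torsion subsheaf supported at the $p_i$ in $P(0,1]$ (it cannot be in $P(>1)$ or $P(\le 0)$). Once $\pi^*L\in P(0,1]$ is established, part (b) reduces to part (a).

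The main obstacle I anticipate is the verification that $P(0,1]\subset P'(-1,2]$ (equivalently, controlling the location of arbitrary vector bundles in the slicing $P$): the hypotheses only directly constrain the torsion objects $\OO_{\pi^{-1}(y)}$, $\OO_{p_i}$, $\zeta\otimes\OO_{p_i}$ and the images $\pi^*L$ of line bundles, so propagating this to all equivariant vector bundles — using that any such bundle sits in extensions built from $\pi^*$-bundles and torsion at the ramification points (the elementary-transformation argument from the proof of Lemma \ref{coh-lem}) — and checking the phase bounds survive, is the delicate bookkeeping step. In part (b) there is the additional subtlety of showing the two alternative hypotheses each suffice to deduce $\pi^*L\in P(0,1]$, which is where local finiteness is genuinely used via Lemma \ref{Noeth-lem} or the finite-length statements.
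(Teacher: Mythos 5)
Your overall strategy---recover the partition, verify the charge hypotheses of Proposition \ref{cover-prop}, and then compare hearts---is sound, but in both parts you take the \emph{wrong direction} in the heart comparison, and that is where the gap is.

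For part (a), you propose invoking Proposition \ref{FormalDistance}(a) by showing $P(0,1]\subset P'(-1,2]$, and you correctly identify the problem this creates: you would have to control which equivariant vector bundles are $\sigma$-stable and what their $\sigma$-phases are, which the hypotheses do not give you. The fix is to go the other way: the paper shows $H(I^+,I^-;\nn)\subset P(0,1]$ directly, and then equality of hearts of bounded $t$-structures does the rest (no appeal to Proposition \ref{FormalDistance}(a) is even needed). This direction is the easy one because the glued heart is the extension-closure of a short explicit list of generators---$\pi^*$ of line bundles and torsion on $Y$, $\OO_{p_i},\zeta\otimes\OO_{p_i}$ for $i\in I^0$, $\OO_{p_i}[-n_i]$ for $i\in I^+$, $\zeta\otimes\OO_{p_i}[n_i]$ for $i\in I^-$---each of which lies in $P(0,1]$ \emph{by hypothesis}, and $P(0,1]$ is extension-closed. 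Your ``delicate bookkeeping step'' of propagating phase bounds to all equivariant vector bundles simply does not arise. The elementary-transformation decomposition you invoke lives in the glued heart $H$, so you cannot use it to constrain $P(0,1]$ until after you already know $H = P(0,1]$; this is circular as stated.

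For part (b), the paper does \emph{not} reduce to (a); it shows $P(0,1]\subset H(I^+,\emptyset)$ directly (possible here, unlike in the general (a), because all $n_i=1$ so all equivariant vector bundles lie in $H$). It classifies the $\sigma$-stable objects in $P(0,1]$ using Lemma \ref{simple-obj-lem}, ruling out the torsion cases whose central charges are forced to be real, and for the vector bundle case deducing $m=0$ from the $\Hom$-vanishing \eqref{Hom-van-eq}. Your proposed reduction to (a) hinges on first proving $\OO_{2p_i}\in P(1)$, and your justification for that---that $\OO_{2p_i}$ has real negative charge and ``is semistable''---asserts semistability without proof. The hypotheses only place $\OO_{p_i}\in P[1,2)$ and $\zeta\otimes\OO_{p_i}\in P(0,1]$, which puts $\OO_{2p_i}\in P(0,2)$; having $Z(\OO_{2p_i})\in\R_{<0}$ does not by itself force semistability, since there could be HN factors with phases above and below $1$ whose imaginary parts cancel. (Also, the triangle you cite is misremembered: $\OO_{2p_i}$ is an extension of $\OO_{p_i}$ by $\zeta\otimes\OO_{p_i}$, not by $\zeta\otimes\OO_{p_i}[-1]$.) So the reduction needs a separate argument for $\OO_{2p_i}\in P(1)$, and once you have proved what you need for that you would essentially have reproduced the paper's direct classification anyway---while still inheriting the gap in your part (a).

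Two smaller points worth noting: the disjointness of $I^0,I^+,I^-$ in (a) should be justified by observing that $\Hom(\OO_{p_i},\zeta\otimes\OO_{p_i}[1])\neq 0$ rules out (2) and (3) holding simultaneously; and your side remark about reading $\Im Z(\OO_X)$ from $\Im Z(\pi^*L)$ is fine, but the paper does not actually need to argue for it in this lemma, since the conditions on $Z$ required by Proposition \ref{cover-prop} follow from the slicing hypotheses at the torsion objects together with $\pi^*L\in P(0,1]$.
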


\begin{proof} (a) 
Let $I^0$, $I^+$ and $I^-$
be the subsets of $i$ such that conditions (1), (2) and (3) hold, respectively.
Note that since we have nonzero maps $\OO_{p_i}\to \zeta\otimes\OO_{p_i}[1]$, the conditions
(2) and (3) (and therefore, the subsets $I^0$, $I^+$ and $I^-$) are mutually disjoint.
For each $i\in I^+$ (resp., $i\in I^-$)
there is a unique $n_i>0$ such that $\phi(\OO_{p_i})-n_i\in (0,1]$
(resp., $\phi(\OO_{p_i})+n_i\in (0,1]$). Then $Z$ satisfies the conditions of Proposition 
\ref{cover-prop}, so it remains to check that $H=H(I^+,I^-;\nn)\sub P(0,1]$.
Note that by definition, we have $\OO_{\pi^{-1}(y)}\in P(0,1]$ for all $y\in Y$;
$\OO_{p_i},\zeta\otimes\OO_{p_i}\in P(1)$ for $i\in I^0$; $\OO_{p_i}[-n_i]\in P(0,1]$ for
$i\in I^+$ and $\zeta\otimes\OO_{p_i}[n_i]\in P(0,1]$ for $i\in I^-$. It remains to show
that $\pi^*V\in P(0,1]$ for every vector bundle $V$ on $Y$. But such a vector bundle
can be presented as an extension of line bundles, so this follows from our assumption.

\medskip

\noindent
(b) It is enough the check that 
$P(0,1]\sub H=H(I^+,\emptyset)$ (where $I^0$ is the complement to $I^+$).
First, we observe that in this case all equivariant vector bundles are in $H$, as extensions of
direct sums of sheaves of the form $\zeta\otimes\OO_{p_i}$ by a sheaf in $\pi^*\Coh(Y)$.
Let $E$ be a $\sigma$-stable object in $P(0,1)$. Note that $E$ is endosimple.
Let us consider possibilities for $E$ listed in Lemma \ref{simple-obj-lem}.
Since $Z(\OO_{\pi^{-1}(y)})=Z(\zeta\otimes\OO_{2p_i})\in\R_{<0}$ and $E\in P(0,1)$,
we obtain that for some $m\in\Z$,
$E[m]$ is either a vector bundle, or isomorphic to $\OO_{p_i}[-1]$,
or to $\zeta\otimes\OO_{p_i}$. In the last two cases our assumptions on $\sigma$ imply that $m=0$, so $E\in H$. If $E[m]$ is a vector bundle then using the condition
$E\in P(0,1)$ we get
\begin{equation}\label{Hom-van-eq}
\Hom^{\leq -1}(E,\OO_{\pi^{-1}(y)})=\Hom^{\leq 0}(\OO_{\pi}^{-1}(y),E)=0.
\end{equation}
This implies that $m=0$, so $E\in H$. 
Next, let $E$ be a $\sigma$-stable object in $P(1)$. We can assume that $E$ is not isomorphic
to $\OO_{\pi^{-1}(y)}$ for $y\in Y\setminus R$ since these objects are in $H$. 
Assume that $E[m]$ is a vector bundle. Note that this case cannot occur if $\Im Z(V)>0$ for all equivariant
vector bundles, so we can assume that the objects $\OO_{\pi^{-1}(y)}$ for $y\in Y\setminus R$ are
stable. Then the vanishing \eqref{Hom-van-eq} still
holds, so we deduce again that $m=0$. The case when
$E[m]$ is either $\OO_{p_i}$, or $\zeta\otimes\OO_{p_i}$ (where $i\in I^0$) is also clear.
Note that for $i\in I^0$ we have $\OO_{p_i},\zeta\otimes\OO_{p_i}\in P(1)$. Hence, for
such $i$ the objects $\OO_{2p_i}$ and $\zeta\otimes\OO_{2p_i}$ are not $\sigma$-stable.
Now
assume that $E[m]\simeq \OO_{2p_i}$, where $i\in I^+$. Since $\OO_{2p_i}\in P(0,2)$ as an extension
of $\OO_{p_i}$ by $\zeta\otimes \OO_{p_i}$, this implies that $m=0$, so $E\in H$. Finally,
we observe that for $i\in I^+$ 
the object $\zeta\otimes\OO_{2p_i}$ is not semistable since it is an extension
of $\zeta\otimes\OO_{p_i}$ by $\OO_{p_i}$, where $\phi_{\min}(\zeta\otimes\OO_{p_i})<1$
and $\phi_{\max}(\OO_{p_i})>1$.
\end{proof}

Note that the classes $[\OO_X]$, $[\OO_{\pi^{-1}(y)}]$, and $[\OO_{p_i}]$, $i\in\{1,\ldots,n\}$,
form a basis in $\NN(\DD_{\Z_2}(X))$. Thus, we can define a norm on the vector space
$\Hom(\NN(\DD_{\Z_2}(X)),\C)$ by setting
$$||Z||=\max(|Z(\OO_X)|,\max_{E}|Z(E)|),$$
where $E$ runs over all  endosimple  torsion sheaves in $\Coh_{\Z_2}(X)$ (see Lemma
\ref{simple-obj-lem}). It is also convenient to set for $Z\in \Hom(\NN(\DD_{\Z_2}(X)),\C)$
$$v_Z:=Z(\OO_{\pi^{-1}(y)})\in\C.$$
Let us define an open subset $\ov{U}\sub \Hom(\NN(\DD_{\Z_2}(X)),\C)$ as the set
of central charges $Z$ satisfying the following assumptions:
\begin{enumerate}
\item
for every $\Z_2$-equivariant line bundle $L$ on $X$ one has
$\det(Z(L),v_Z)>0$;
\item
for every $i=1,\ldots,n$ one has
$Z(\OO_{p_i})\not\in\R_{\leq 0}\cdot v_Z$, 
$Z(\zeta\otimes\OO_{p_i})\not\in\R_{\leq 0}\cdot v_Z$.
\end{enumerate}
Note that in the first condition it is enough to consider representatives in the cosets for
the subgroup $\pi^*\Pic(Y)\sub \Pic_{\Z_2}(X)$, so there is only finite number of inequalities 
to check (hence, $\ov{U}$ is open). Also, this condition implies that
$\det(Z(V),v_Z)>0$ for every equivariant vector bundle $V $on $X$, since they can be obtained
from line bundles by successive extensions.

\begin{lem}\label{num-lem}
\begin{enumerate}
\item
Let $Z:\NN(\DD_{\Z_2}(X))\to\C$ be a homomorphism such that $\Im Z(\OO_X)>0$,
$Z(\OO_{\pi^{-1}(y)})\in\R_{<0}$, and for every $i=1,\ldots,n$ one has
$Z(\OO_{p_i})\neq 0$ and $\Im Z(\OO_{p_i})\leq 0$.
Then there exists a constant $r>0$ such that 
for every $Z'\in \Hom(\NN(\DD_{\Z_2}(X)),\C)$ and every endosimple
object $E\in\DD_{\Z_2}(X)$ one has
$$|Z'(E)|\leq r\cdot ||Z'||\cdot |Z(E)|.$$
\item
The above conclusion also holds for $Z\in\ov{U}$.
\end{enumerate}
\end{lem}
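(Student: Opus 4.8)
The plan is to prove part (1) directly, giving an explicit bound, and then to deduce part (2) by a short argument reducing to case (1).

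For part (1), the key is Lemma \ref{simple-obj-lem}, which says that every endosimple object $E$ of $\DD_{\Z_2}(X)$ is, up to shift, one of finitely many ``named'' types: a vector bundle, $\OO_{\pi^{-1}(y)}$, $\zeta\otimes\OO_{2p_i}$, $\OO_{p_i}$, or $\zeta\otimes\OO_{p_i}$. Since both $|Z'(E)|/||Z'||$ and $|Z(E)|$ are invariant under shifting $E$, it suffices to bound $|Z'(E)|\le r\,||Z'||\,|Z(E)|$ separately over each of the five families. For the torsion families this is essentially trivial: $||Z'||$ is by definition the maximum of $|Z'(\OO_X)|$ and $|Z'(T)|$ over endosimple torsion sheaves $T$, so $|Z'(E)|\le ||Z'||$ for $E$ torsion, and we just need a uniform positive lower bound on $|Z(E)|$ over these families. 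The classes $[\OO_{\pi^{-1}(y)}]$ and $[\zeta\otimes\OO_{2p_i}]$ are all equal in $\NN$, with $Z$-value $Z(\OO_{\pi^{-1}(y)})\in\R_{<0}$, which is nonzero by assumption. The sheaves $\OO_{p_i}$, $\zeta\otimes\OO_{p_i}$ give finitely many classes, with $Z(\OO_{p_i})\ne 0$ assumed and $Z(\zeta\otimes\OO_{p_i})$ nonzero because $[\OO_{p_i}]+[\zeta\otimes\OO_{p_i}]=[\OO_{\pi^{-1}(y)}]$ forces $Z(\zeta\otimes\OO_{p_i})=v_Z-Z(\OO_{p_i})$, and I would check this is nonzero from $\Im Z(\OO_{p_i})\le 0$ together with $v_Z\in\R_{<0}$ (so either the imaginary parts differ or, if $\Im Z(\OO_{p_i})=0$, then $Z(\OO_{p_i})$ and $v_Z$ are real and one argues directly). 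Taking $r$ large enough to absorb the reciprocal of the minimum of these finitely many nonzero moduli handles all torsion types.

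The substantive case, which I expect to be the main obstacle, is $E$ a $\Z_2$-equivariant vector bundle. Here $|Z'(E)|$ is genuinely unbounded relative to $||Z'||$ over all bundles if one is not careful, so the point is to use the hypothesis. The class $[E]$ in $\NN(\DD_{\Z_2}(X))$ is determined by its rank $\rk E$ and degree-type data, and modulo $\pi^*\Pic(Y)$ there are only finitely many ``shapes'' of line bundles; writing $E$ as a successive extension of equivariant line bundles, $[E]=\sum[L_k]$. The idea is that for a vector bundle $\Im Z(E)=\rk(E)\cdot c$ with $c=\Im Z(\OO_X)>0$ (the imaginary part of $Z$ on $\pi^*\DD(Y)$-classes is proportional to rank, and the $\OO_{p_i}$-components do not contribute to $\Im Z$ once we use that $\Im Z(\OO_{p_i})\le 0$—actually one must be more careful and argue that the ``honest rank'' part dominates). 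More precisely I would argue: $|Z(E)|\ge \Im Z(E)$, and $\Im Z(E)$ grows at least linearly in the total multiplicity $\sum_k 1$ of line-bundle factors up to a bounded correction, while $|Z'(E)|\le \sum_k|Z'(L_k)|\le ||Z'||\cdot\#\{k\}\cdot(\text{bounded factor})$ since each $[L_k]$ lies in a fixed finite set of $\pi^*\Pic(Y)$-cosets and $Z'$ of a coset representative is bounded by a constant times $||Z'||$ (the coset shift is by $\pi^*(\text{line bundle on }Y)$, whose $Z'$-value is controlled by $||Z'||$ via the structure of $\NN$). Balancing these two linear-in-$\#\{k\}$ estimates produces the constant $r$. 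The delicate point is ensuring the lower bound on $|Z(E)|$ is truly linear in the number of summands and not spoiled by cancellation—this is exactly where positivity of $\Im Z(\OO_X)$ and the sign condition $\Im Z(\OO_{p_i})\le 0$ enter, since they guarantee $\Im Z(E)=\rk(E)\cdot\Im Z(\OO_X)+(\text{nonpositive torsion correction})$ cannot be made small while the bundle is large; I would phrase this carefully via $\Im Z(E)=\rk(E)\,\Im Z(\OO_X)$ exactly (torsion twists contribute zero imaginary part under these hypotheses) and note $\rk(E)$ is comparable to the number of line-bundle factors.

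For part (2), I would show that any $Z\in\ov{U}$ can, after acting by a suitable element of $\gltwoplus$ (which scales $||Z||$, $||Z'||$ and each $|Z(E)|$, $|Z'(E)|$ by controlled comparable amounts, so preserves the conclusion up to adjusting $r$), be brought into the situation of part (1): condition (1) defining $\ov{U}$ says $\det(Z(L),v_Z)>0$ for all equivariant line bundles $L$, which after rotating so that $v_Z\in\R_{<0}$ gives $\Im Z(L)>0$, hence $\Im Z(\OO_X)>0$; condition (2) gives $Z(\OO_{p_i})\notin\R_{\le0}v_Z$ and similarly for $\zeta\otimes\OO_{p_i}$, which in particular forces $Z(\OO_{p_i})\ne 0$ and $Z(\zeta\otimes\OO_{p_i})\ne 0$. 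The one gap is the sign hypothesis $\Im Z(\OO_{p_i})\le 0$ from part (1), which need not hold verbatim on $\ov{U}$; here I would instead re-run the vector-bundle estimate of part (1) using directly that $\det(Z(V),v_Z)>0$ for all equivariant bundles $V$ (this is noted in the excerpt as a consequence of condition (1)), which is the coordinate-free version of ``$\Im Z(V)$ has a fixed sign and is comparable to $\rk(V)$'' and is exactly what the argument needs—so the whole of part (1)'s proof goes through with $\Im Z(V)>0$ replaced by $\det(Z(V),v_Z)>0$, and the finitely-many-nonzero-moduli bound for torsion classes still applies. I would therefore organize the writeup so that part (1) is proved in a form whose vector-bundle step only uses $\det(Z(V),v_Z)>0$, making (2) immediate.
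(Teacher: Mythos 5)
Your overall skeleton matches the paper: reduce to the classification of endosimple objects in Lemma \ref{simple-obj-lem}, dispose of the torsion cases via the finiteness of the set of nonzero values $Z(E)$, and isolate the equivariant vector bundles as the substantive case. Your handling of part (2) is also a defensible alternative to the paper's: where the paper first rotates so that $v_Z\in\R_{<0}$ and then tensors with $L=\OO(\sum_{i\in I}p_i)$ for $I=\{i:\Im Z(\OO_{p_i})>0\}$ (which flips $\OO_{p_i}$ to $\zeta\otimes\OO_{p_i}$ and literally reduces to the hypotheses of (1)), you instead propose to reprove (1) under the coordinate-free hypothesis $\det(Z(V),v_Z)>0$. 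That would also work, though the paper's reduction is cleaner and avoids rewriting the estimate.

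However, there is a genuine gap in your vector-bundle estimate, and it is precisely the point where the paper does nontrivial work. Writing $[E]=\sum_k[L_k]$ with $L_k$ equivariant line bundles and bounding $|Z'(E)|\le\sum_k|Z'(L_k)|\le\#\{k\}\cdot\|Z'\|\cdot(\text{const})$ is \emph{not} correct: within a fixed $\pi^*\Pic(Y)$-coset one has $L_k\simeq L_0\otimes\pi^*M$ with $[L_k]=[L_0]+(\deg M)[\OO_{\pi^{-1}(y)}]$ in $\NN$, so $|Z'(L_k)|$ grows linearly in $|\deg M|$ and is \emph{unbounded} in terms of $\|Z'\|$ alone. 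Controlling the rank alone does not suffice; you must also bound the total degree. The paper supplies exactly this missing ingredient: after using the exact sequence $0\to\pi^*E'\to E\to\oplus_i\zeta\otimes\OO_{p_i}^{m_i}\to 0$ it bounds $\rk(E)\le|Z(E)|/\Im Z(\OO_X)$ \emph{and} separately bounds $\deg(E')$ in terms of $|Z(E)|$ (using $[\pi^*E']=\rk(E)[\OO_X]+\deg(E')[\OO_{\pi^{-1}(y)}]$ and that $Z(\OO_{\pi^{-1}(y)})\ne 0$), and only then combines the two into a single constant $r_2$. Your proposal never controls the degrees, so the two ``linear-in-$\#\{k\}$'' estimates you want to balance do not both hold.

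There is also a persistent sign slip worth flagging. You describe the torsion correction to $\Im Z(E)$ as ``nonpositive'' in one place and ``zero'' in another. In fact, from the exact sequence above the cokernel consists of copies of $\zeta\otimes\OO_{p_i}$, and under the hypothesis $\Im Z(\OO_{p_i})\le 0$ with $v_Z$ real one gets $\Im Z(\zeta\otimes\OO_{p_i})=-\Im Z(\OO_{p_i})\ge 0$: the correction is \emph{nonnegative}. This is the sign the argument actually needs, since it gives $\Im Z(E)\ge\rk(E)\,\Im Z(\OO_X)$ and hence the upper bound $\rk(E)\le|Z(E)|/\Im Z(\OO_X)$; as written your claims $\Im Z(E)=\rk(E)\Im Z(\OO_X)$ exactly, or $\Im Z(E)=\rk(E)\Im Z(\OO_X)+(\text{nonpositive})$, are both incorrect and the second would yield an inequality in the wrong direction.
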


\begin{proof} 
(1) Our conditions on $Z$ imply that $Z(E)\neq 0$ for every endosimple torsion
$\Z_2$-equivariant coherent sheaf $E$. Therefore, we can set
$$r_1=\max_E(|Z(E)|^{-1}),$$
where $E$ runs over all endosimple torsion sheaves.
If $E$ is such a sheaf then $|Z'(E)|\leq ||Z'||$, so the required inequality holds for $E$
provided $r\geq r_1$.
Now assume that $E$ is a $\Z_2$-equivariant vector bundle on $X$.
Then there exists an exact sequence of the form
$$0\to \pi^*E'\to E\to\oplus_i \zeta\otimes\OO_{p_i}^{m_i}\to 0,$$
where $0\le m_i\leq\rk(E)$.
Then
$$|Z'(E)|\leq |Z'(\pi^*E')|+n\rk(E)\cdot ||Z'||.$$
Note that 
\begin{equation}\label{E'-K0-eq}
[\pi^*E']=\rk(E)[\OO_X]+\deg(E')[\OO_{\pi^{-1}(y)}]
\end{equation}
in $\NN(\DD_{\Z_2}(X))$. 
Thus, we obtain
\begin{equation}\label{Z'-eq}
|Z'(E)|\leq ||Z'||\cdot [(n+1)\rk(E)+\deg(E')].
\end{equation}
On the other hand, from the above exact sequence we get
$$\Im Z(E)=\Im Z(\pi^*E')+\sum_i m_i\cdot \Im Z(\zeta\otimes\OO_{p_i}).$$
Since $\Im Z(\zeta\otimes\OO_{p_i})\geq 0$ and
$\Im Z(\pi^*E')=\Im Z(\OO_X)\cdot\rk(E)$, we deduce that
$$\rk(E)\leq \frac{|Z(E)|}{\Im Z(\OO_X)}.$$
Also, from \eqref{E'-K0-eq} we get
$$|\deg(E')Z(\OO_{\pi^{-1}(y)})|\leq |Z(\pi^*E')|+\rk(E)|Z(\OO_X)|\leq
|Z(E)|+(n+1)\rk(E)\cdot ||Z||.$$
Using our estimate for $\rk(E)$ we get that
$$\deg(E')\leq |Z(\OO_{\pi^{-1}(y)})|^{-1}\cdot [1+(n+1)\frac{||Z||}{\Im Z(\OO_X)}]\cdot |Z(E)|.$$
Therefore, from \eqref{Z'-eq} we obtain
$$|Z'(E)|\leq r_2 ||Z'||\cdot |Z(E)|,$$
where
$$r_2=\frac{n+1}{\Im Z(\OO_X)}+|Z(\OO_{\pi^{-1}(y)})|^{-1}\cdot [1+(n+1)\frac{||Z||}{\Im Z(\OO_X)}].$$
It remains to set $r=\max(r_1,r_2)$.

\noindent
(2) The subset $\ov{U}\sub \Hom(\NN(\DD_{\Z_2}(X)),\C)$ is stable under composition with 
rotations of $\C$ and with automorphisms of $\NN(\DD_{\Z_2}(X))$ given by tensoring
with an equivariant line bundle $L$. Also, the norms $||\cdot ||$ and
$Z'\mapsto ||Z'\circ (\otimes L)||$ on the finite-dimensional vector space
$\Hom(\NN(\DD_{\Z_2}(X)),\C)$ are equivalent, while composing with a rotation of $\C$ does not
change the norms. Therefore, we can modify $Z$ using these operations before checking the
required inequalities. Rotating $Z$ we can assume that $v_Z\in\R_{<0}$. Next,
let $I\sub\{1,\ldots,n\}$ be the set of $i$
such that $\Im Z(\OO_{p_i})>0$. Taking $L=\OO(\sum_{i\in I}p_i)$ we will have
$$L\otimes\OO_{p_i}\simeq \begin{cases}\zeta\otimes\OO_{p_i}, & i\in I \\ \OO_{p_i}, & i\not\in I.
\end{cases}$$
Therefore composing $Z$ with tensoring by $L$ we get the situation considered in (1).
\end{proof}

Recall that for every point $\sigma\in\Stab_{\NN}(\DD)$ a neighborhood of $\sigma$ in 
$\Stab_{\NN}(\DD)$ is homeomorphic to a neighborhood of the corresponding central
charge in the linear subspace $W^{\NN}_{\sigma}\sub \Hom(\NN(\DD),\C)$. A stability condition
$\sigma$ is called {\it full} if $W^{\NN}_{\sigma}= \Hom(\NN(\DD),\C)$.
The above Lemma implies that every stability condition with the central charge in the set
$\ov{U}$ is full.

\begin{thm}\label{cover-thm} Let $U\sub\Stab_{\NN}(\DD_{\Z_2}(X))$ denote the set of
locally finite stability conditions $\sigma=(Z,P)$ such that 
\begin{enumerate}
\item
$\OO_{\pi^{-1}(y)}$ is stable of phase $\phi_{\sigma}$ for every  $y\in Y\setminus R$;
\item
$\OO_{p_i},\zeta\otimes\OO_{p_i}$ are semistable with the phases in 
$(\phi_{\sigma}-1,\phi_{\sigma}+1)$
for all $i=1,\ldots,n$.
\end{enumerate}
Then every point in $U$ is obtained from one of the stability conditions described in Proposition
\ref{cover-prop} with $I^-=\emptyset$ and all $n_i=1$
by the action of an element of $\R\times\Pic_{\Z_2}(X)$, where $\R$ acts
on $\Stab_{\NN}(\DD_{\Z_2}(X))$ by rotations (shifts of phases).
The subset $U$ is open in $\Stab_{\NN}(\DD_{\Z_2}(X))$.
The natural map $U\to\ov{U}$ is a universal covering of $\ov{U}$, and $\ov{U}=U/\Z$, where
$1\in\Z$ acts on the stability space by shifting phases by $2$. Furthermore,
$U$ is contractible.
\end{thm}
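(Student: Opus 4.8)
The plan is to organize the argument around the description of $U$ supplied by Lemma \ref{phase-lem}(b). First I would show that every $\sigma=(Z,P)\in U$ can be brought, by an element of $\R\times\Pic_{\Z_2}(X)$, into the form considered in Lemma \ref{phase-lem}(b). Applying the rotation $R_{1-\phi_\sigma}$ we may assume $\phi_\sigma=1$, so that all $\OO_{\pi^{-1}(y)}$ lie in $P(1)$ and all $\OO_{p_i},\zeta\otimes\OO_{p_i}$ are semistable of phase in $(0,2)$. Since $\Hom(\OO_{p_i},\zeta\otimes\OO_{p_i}[1])\neq 0$, for each $i$ exactly one of $\phi(\OO_{p_i}),\phi(\zeta\otimes\OO_{p_i})$ lies in $(0,1]$ and the other in $[1,2)$; tensoring with $\OO(\sum_{i\in I}p_i)$ for the appropriate $I$ swaps $\OO_{p_i}\leftrightarrow\zeta\otimes\OO_{p_i}$ and so we may arrange $\zeta\otimes\OO_{p_i}\in P(0,1]$, $\OO_{p_i}\in P[1,2)$ for all $i$ (noting $Z(\OO_X)$ still has positive imaginary part after these operations, which is what keeps us in the relevant range). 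At this point we also need that $\OO_{2p_i}\in P(1)$; this follows because $[\OO_{2p_i}]=[\OO_{\pi^{-1}(y)}]$ in $\NN$, so $Z(\OO_{2p_i})=v_Z\in\R_{<0}$, hence $\phi(\OO_{2p_i})=1$, and $\OO_{2p_i}$ is an extension of the two semistable sheaves $\OO_{p_i}$ (phase $\ge 1$) and $\zeta\otimes\OO_{p_i}$ (phase $\le 1$), which forces it into $P(1)$ once one checks semistability — or one invokes the dichotomy hypothesis in Lemma \ref{phase-lem}(b) directly. Then Lemma \ref{phase-lem}(b) identifies $\sigma$ with a stability condition of Proposition \ref{cover-prop} with $I^-=\emptyset$, $I^+=\{i:\Im Z(\OO_{p_i})<0\}$, all $n_i=1$. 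This proves the first assertion.

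Next I would prove $U$ is open. Conditions (1) and (2) defining $U$ are open conditions: stability of a fixed object and membership of its phase in an open interval are both preserved under sufficiently small perturbations of a stability condition, by the definition of the topology on $\Stab$ (the metric $d(P,Q)$ controls phase shifts and the condition $\|U-Z\|_\sigma<\sin(\pi\eps)$ controls the central charge). One has to be mildly careful that "stable of phase exactly $\phi_\sigma$" survives perturbation — but $\phi_\sigma$ is allowed to move with $\sigma$, and $\OO_{\pi^{-1}(y)}$ stays stable nearby, so condition (1) is genuinely open; condition (2) is then automatically open. Alternatively, openness follows by combining Corollary \ref{exc-cor} (applied iteratively to peel off the exceptional objects $\OO_{p_i}[-1]$) with the continuity of gluing, exhibiting $U$ as the image of an open set of gluing data — this route also reproves continuity of the relevant parametrization.

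For the covering-space statement I would argue as follows. The map $U\to\ov U$ is the restriction of the natural projection $\Stab_\NN\to\Hom(\NN,\C)$, which by Bridgeland's theorem is a local homeomorphism (Lemma \ref{num-lem}(2) shows every $Z\in\ov U$ is full, so $W^\NN_\sigma=\Hom(\NN,\C)$ and the projection is a local homeomorphism onto an open subset of the full linear space). To upgrade "local homeomorphism" to "covering map" I would use the first part of the theorem: the fibre over $Z\in\ov U$ is precisely the set of stability conditions produced by Proposition \ref{cover-prop} with this central charge, modulo nothing — but varying the discrete data (the integer shifts $n_i$, equivalently the choice of how many times to apply the deck transformation) shows the fibre is a torsor under $\Z$, where $1\in\Z$ acts by $R_2$ (shift of phase by $2$, which fixes the central charge since $r_{-2\pi}=\id$). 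Concretely: given $Z\in\ov U$, any $\sigma\in U$ over it is determined by the residues $\phi(\OO_{p_i})\bmod 2$ — which are forced by $Z$ via condition (2) and the requirement $\OO_{\pi^{-1}(y)}\in P(1)$ — together with one overall integer ambiguity coming from which lift $\phi_\sigma$ of $\arg$ one picks; hence the fibre is a single $\Z$-orbit under $R_2$. The standard path-lifting criterion (a local homeomorphism with uniformly $\Z$-many sheets, $\Z$ acting freely and transitively on fibres by deck transformations $R_2$) then gives that $U\to\ov U$ is a covering with group $\Z$, so $\ov U=U/\Z$. Simple-connectedness of $U$ — hence that it is the universal cover — and contractibility follow by showing $U$ is homeomorphic, via the parametrization of Proposition \ref{cover-prop} together with the $\R$-action by rotations, to $\R\times(\text{contractible open subset of }\Hom(\NN,\C))$: fixing the combinatorial data $(I^+,\emptyset,\nn\equiv 1)$ and letting $\R$ act by rotations gives a homeomorphism of a connected component of the preimage with an explicit contractible set (the rotation direction contributes the $\R$ factor that unwinds the $\Z$), and $U$ is the union of the $\Z$-translates of one such piece glued along the rotation parameter into a single contractible (indeed simply connected, since it covers the connected $\ov U$ with abelian deck group $\Z$ and is itself connected by the $\R$-action) space.

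The main obstacle I expect is the covering-space step: verifying that the fibres of $U\to\ov U$ are exactly single $\Z$-orbits and that path-lifting holds globally, rather than just that the map is a local homeomorphism. Concretely, one must rule out that some exotic locally finite stability condition over a point of $\ov U$ satisfies (1)–(2) but is \emph{not} in the image of Proposition \ref{cover-prop} — this is exactly what Lemma \ref{phase-lem}(b) is for, so the real work is checking its hypotheses are met for every $\sigma\in U$ (in particular the $\OO_{2p_i}\in P(1)$ input and the semistability dichotomy for $\OO_{p_i},\zeta\otimes\OO_{p_i}$), and then that as $Z$ runs over $\ov U$ the integer $n_i$ is locally constant while the global phase lift accounts for the $\Z$ of deck transformations. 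Once the fibre structure is pinned down, contractibility is essentially bookkeeping with the $\gltwoplus$-action.
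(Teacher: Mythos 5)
Your reduction of the first assertion to Lemma \ref{phase-lem}(b) is essentially the paper's Step 1 and is correct (the aside about forcing $\OO_{2p_i}\in P(1)$ conflates the phase of the complex number $Z(\OO_{2p_i})$ with the phase of the object, but this is harmless since part (b) of the lemma does not need that hypothesis). The rest of the argument has two genuine gaps.

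The main one is the openness of $U$. You assert that ``stability of a fixed object \dots [is] preserved under sufficiently small perturbations of a stability condition, by the definition of the topology on $\Stab$,'' but this is false in general: the condition that a given object be \emph{semi}stable cuts out a \emph{closed} subset, and stability of a fixed object need not be open -- nothing in Bridgeland's deformation theorem guarantees it. The paper has to work for this (Step 3 of the actual proof): starting from $\sigma_0\in U$ and a nearby $\sigma$, it produces an explicit candidate $\sigma'\otimes L$ with $\sigma'$ coming from Proposition \ref{cover-prop}, and then uses Proposition \ref{FormalDistance}(a) (equality of two stabilities with the same central charge whose hearts are at distance $<1$) together with a careful estimate of $d(P,P_0)$ to show $\sigma=\sigma'\otimes L$. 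Your fallback (``exhibiting $U$ as the image of an open set of gluing data'') is also not enough as stated: the gluing map of Theorem \ref{a-thm}/Corollary \ref{exc-cor} is continuous, but the image of a continuous map is not open, and the argument for openness of the image is exactly what is missing. Openness of $U$ is not a side remark -- it is needed even for $U\to\ov U$ to be a local homeomorphism, which your covering-space step presupposes.

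Second, the upgrade from local homeomorphism to covering map is left at the level of heuristics. ``A local homeomorphism with uniformly $\Z$-many sheets and $\Z$ acting freely and transitively on fibres'' is not by itself a covering map (e.g.\ a local homeomorphism restricted to an ad-hoc open subset can destroy the covering property); one needs either a proper discontinuity argument together with the identification $E/\Z\cong B$, or, as in the paper, the cleaner route of showing that the preimage $U'$ of $\ov U$ is a covering of $\ov U$ (via Lemma \ref{num-lem}(2) and the argument of Proposition 8.3 of \cite{Bridgeland-K3}) and then that $U$ is both open and closed in $U'$ (the paper's Steps 3 and 4). Step 4 in particular -- that $U$ is cut out of $U'$ by \emph{closed} conditions -- is absent from your outline and is what prevents a nearby stability from falling out of $U$ while its central charge remains in $\ov U$. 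Finally, your contractibility argument is vague; the paper simply uses the free $\R$-action by phase shifts and identifies a slice $\{\phi_\sigma=1\}$ with an explicit contractible set of central charges, which is much more direct than ``gluing $\Z$-translates along the rotation parameter.''
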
 

\begin{proof} 
{\bf Step 1}. If $\sigma=(Z,P)\in U$ then
$\sigma$ is obtained from one of the stability conditions described in Proposition
\ref{cover-prop} with $I^-=\emptyset$ and all $n_i=1$
by the action of an element of $\R\times\Pic_{\Z_2}(X)$.
Indeed, by rotating $\sigma$ we can assume that $\phi_{\sigma}=1$.
Now using tensoring with an appropriate equivariant line bundle we can assume that
$\Im Z(\OO_{p_i})\leq 0$ for all $i$. 
It remains to apply Lemma \ref{phase-lem}(b). 

Note that this step implies that for $\sigma=(Z,P)\in U$ one has $Z\in \ov{U}$.

\noindent
{\bf Step 2}. Let $U'$ be the preimage of $\ov{U}$ in $\Stab_{\NN}(\DD_{\Z_2}(X))$. Then
the projection $U'\to\ov{U}$ is a covering map. This is checked exactly as in
Proposition 8.3 of \cite{Bridgeland-K3} using Lemma \ref{num-lem}(b).

\noindent
{\bf Step 3}. $U$ is open in $\Stab_{\NN}(\DD_{\Z_2}(X))$. Let $\sigma_0=(Z_0,P_0)\in U$.
We have to prove that any stability $\sigma=(Z,P)$, sufficiently close to $\sigma_0$, is still in $U$.
Using rotations it is enough to consider the case when $Z(\OO_{\pi^{-1}(y)})\in\R_{<0}$.
By Step 1 we can assume that $\sigma_0$ is a stability arising in Proposition \ref{cover-prop}
with $I^-=\emptyset$ and all $n_i$'s equal to $1$.
For a $\Z_2$-equivariant line bundle $L$ and
a stability condition $\sigma'=(Z',P')$ we denote by $\sigma'\otimes L$ the stability condition
with central charge $E\mapsto Z'(E\otimes L^{-1})$ and the heart $P'(0,1]\otimes L$.
It is enough to check that $\sigma=\sigma'\otimes L$, where $\sigma'$ is one of stability conditions 
from Proposition \ref{cover-prop} (with $I^-=\emptyset$ and $n_i=1$). 
Let us set $L=\OO_X(\sum_{i\in I(+)}p_i)$, where $I(+)=\{i \ |\ \Im Z(\OO_{p_i})>0\}$.
We claim that the central charge
$Z'(E):=Z(E\otimes L)$ satisfies the assumptions of Proposition \ref{cover-prop} with 
$I^+=\{i \ |\ \Im Z(\OO_{p_i})\neq 0\}$, $I^-=\emptyset$ and all $n_i=1$.
Indeed, first, note that $Z'(\OO_{\pi^{-1}(y)})=Z(\OO_{\pi^{-1}(y)})\in\R_{<0}$, and
$Z'(\OO_X)=Z(L)$ is in the upper-half plane, provided $\sigma$ is close
enough to $\sigma_0$. Next, using the fact that 
$$\OO_{p_i}\otimes L\simeq\begin{cases}\OO_{p_i}, & i\not\in I(+),\\
\zeta\otimes\OO_{p_i}, & i\in I(+)\end{cases}$$ 
one checks the remaining assumptions.
Therefore, by Proposition \ref{cover-prop}, there exists a stability condition $\sigma'$ with
the central charge $Z'$ and the heart $H(I^+,\emptyset)$. Now we claim
that $\sigma=\sigma'\otimes L$. Since the corresponding central charges are the same,
by Proposition \ref{FormalDistance}(a), it remains to check that 
$H(I^+,\emptyset)\otimes L\sub P(-1,2]$. It is easy to see that 
\begin{eqnarray}\label{cover-heart-eq}
H(I^+,\emptyset)\otimes L =&[ \OO_X(\sum_{i\in I(+)}p_i)\otimes\pi^*\Coh(Y), \nonumber\\
&[\zeta\otimes\OO_{p_i}\ |\ i\not\in I(+)], [\OO_{p_i}\ |\ i\not\in I(-)], \nonumber\\
&[\zeta\otimes\OO_{p_i}[-1]\ |\ i\in I(+)], [\OO_{p_i}[-1]\ |\ i\in I(-)] ],
\end{eqnarray}
where $I(-)=\{i \ |\ \Im Z(\OO_{p_i})<0\}$.
Hence,
$$H(I^+,\emptyset)\otimes L\sub T_0:=[ P_0(0,1], 
[ \OO_{p_i},\OO_{p_i}[-1],\zeta\otimes\OO_{p_i}[-1]\ |\ i=1,\ldots,n] ].$$
Furthermore, we have $\OO_{p_i}\in P_0[1,2)$ and $\zeta\otimes \OO_{p_i}\in P_0(0,1]$.
Hence, we have $T_0\sub P_0(-1+\eps,2-\eps)$ for some $\eps>0$ depending only on $\sigma_0$.
Thus, for $d(P,P_0)<\eps$ we obtain
$$H(I^+,\emptyset)\otimes L\sub P_0(-1+\eps,2-\eps)\sub P(-1,2]$$
as required.

\noindent
{\bf Step 4}. $U$ is closed in $U'$. More precisely, we claim that $U$ coincides with the set of
$\sigma\in U'$ such that $\OO_{\pi^{-1}(y)}$ is semistable of phase $\phi_{\sigma}$ 
for every $y\in Y\setminus R$, and for every $i\in\{1,\ldots,n\}$ the objects
$\OO_{p_i}$ and $\zeta\otimes\OO_{p_i}$ are semistable with the phases in
$[\phi_{\sigma}-1,\phi_{\sigma}+1]$.
(recall that the set of stability conditions such that a given object $E$ is semistable is closed).
Indeed, given such $\sigma=(Z,P)$, by rotating it and using tensoring with an equivariant line bundle
we can assume that $\phi_{\sigma}=1$, and
$\Im Z(\OO_{p_i})\leq 0$ for all $i$. Note that the condition $Z\in\ov{U}$ implies that
the phase of $\OO_{p_i}$ (resp., $\zeta\otimes\OO_{p_i}$) is in $[1,2)$ (resp., in $(0,1]$)
for every $i$, and 
$\Im Z(V)>0$ for every $\Z_2$-equivariant vector bundle $V$.
Hence, by Lemma \ref{phase-lem}(b), $\sigma$ is obtained by the construction of Proposition
\ref{cover-prop}, which implies that $\OO_{\pi^{-1}(y)}$ is stable for every $y\in Y\setminus R$.
It remains to note that for $\si\in U'$ the phases of $Z(\OO_{p_i})$ and of $Z(\zeta\ot\OO_{p_i})$ never
equal $\phi_{\sigma}\pm 1$.

Combining Steps 2, 3 and 4 we obtain that $U\to\ov{U}$ is a covering map.

\noindent
{\bf Step 5}. Assume $\sigma_1, \sigma_2\in U$ have the same central charge $Z$. Then $\sigma_2$
is obtained from $\sigma_1$ by a shift of phase in $2\Z$. Indeed, applying such a shift we
can assume that $\phi_{\sigma_1}=\phi_{\sigma_2}$. Furthermore, applying a rotation 
and tensoring with a line bundle, we reduce to the situation $\phi_{\sigma_1}=1$ and
$\Im Z(\OO_{p_i})\leq 0$ for all $i$. By Lemma \ref{phase-lem}(b), in this case the hearts of
$\sigma_1$ and $\sigma_2$ are the same.

\noindent
{\bf Step 6}. It remains to show that $U$ is contractible. 
We have a free action of $\R$ on $U$ by the
shift of phase, so it is enough to consider the section of this action consisting of 
$\sigma\in U$ with $\phi_{\sigma}=1$. In other words, we have to consider the subset of
$\ov{U}$ consisting of $Z$ with $v_Z=Z(\OO_{\pi^{-1}(y)})\in\R_{<0}$.
A homomorphism $Z$ in this subset is determined by the following contractible data:
\begin{enumerate}
\item $v_Z\in\R_{<0}$;
\item for every $i\in\{1,\ldots,n\}$, $Z(\OO_{p_i})\in\C\setminus(\R_{\geq 0}\cup (v_Z+R_{\leq 0}))$;
\item $Z(\OO_X)$ in some half-plane of the form $\Im z>c$.
\end{enumerate}
\end{proof}

\begin{remark} In the next section we will study more closely the case $g(Y)\geq 1$.
We will show that in this case the objects
$\OO_{\pi^{-1}(y)}$ for $y\in Y\setminus R$ are
automatically stable with respect to any stability on $\DD_{\Z_2}(X)$,
and will describe the entire space $\Stab_{\NN}(\DD_{\Z_2}(X))$.
\end{remark}

We conclude this section with one observation in the case where
$Y = \P^{1}$. 

\begin{prop}\label{exc-P1-prop}
Consider a stability $\sigma=(Z,P) \in U$, where $U$ is as in Theorem
\ref{cover-thm}. Assume that for every $i=1,\ldots,n$ the vectors $Z(\OO_{p_i})$ and
$Z(\OO_{2p_i})$ are linearly independent over $\R$.
Then some rotation of $\sigma$ is glued from an exceptional collection.
\end{prop}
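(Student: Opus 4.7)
The plan is to apply Theorem \ref{cover-thm} to reduce $\sigma$ to a normalized form from Proposition \ref{cover-prop}, use the hypothesis to force $I^0=\emptyset$, and then rotate by a small positive angle so that the rotated heart agrees with the glued heart of an explicit full Ext-exceptional collection.

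By Theorem \ref{cover-thm} every $\sigma\in U$ is obtained from a stability $\tilde\sigma$ of Proposition \ref{cover-prop} (with $I^-=\emptyset$ and all $n_i=1$) by the action of $\R\times\Pic_{\Z_2}(X)$. The property ``some rotation is glued from an Ext-exceptional collection'' is invariant under rotation (tautologically) and under tensoring by an equivariant line bundle (which sends an Ext-exceptional collection to an Ext-exceptional collection), so I may replace $\sigma$ by $\tilde\sigma$. Since $Z(\OO_{2p_i})=v_Z\in\R_{<0}$, the linear-independence hypothesis forces $Z(\OO_{p_i})\notin\R$; combined with the constraint $Z(\OO_{p_i})\in-\hh$ coming from Proposition \ref{cover-prop}(2), this places $Z(\OO_{p_i})$ strictly in the open lower half-plane. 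In particular no index $i$ lies in $I^0$ (where one would need $Z(\OO_{p_i})\in\R_{<0}$), so $I^0=\emptyset$, $I^+=\{1,\ldots,n\}$, and
\[ H=P(0,1]=[\pi^{*}\Coh(\P^1),\OO_{p_1}[-1],\ldots,\OO_{p_n}[-1]], \]
with all phases $\phi_k:=\phi(\pi^{*}\OO(k))$ and $\psi_i:=\phi(\OO_{p_i}[-1])$ lying strictly in $(0,1)$.

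Next, pick $a\in(0,\min_i\psi_i)$ with $a\notin\{\phi_k\}_{k\in\Z}$, and let $d$ be the unique integer with $\phi_d<a<\phi_{d+1}$. The candidate full collection is
\[ \mathcal{E}=(\pi^{*}\OO(d)[1],\,\pi^{*}\OO(d+1),\,\OO_{p_1}[-1],\ldots,\OO_{p_n}[-1]), \]
whose fullness follows from Theorem \ref{semiorth-thm} combined with $\DD(\P^1)=\lan\OO(d)[1],\OO(d+1)\ran$. Checking Ext-exceptionality reduces to standard Hom-computations on $\P^1$ together with the key equivariant vanishing $\Hom^{*}_{\Z_2}(\OO_{p_i},\pi^{*}L)=0$ for any equivariant line bundle $L$, which holds because the tangent space $T_{p_i}X$ at a ramification point carries the $\zeta$-representation while $\pi^{*}L|_{p_i}$ is trivial. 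The ordering is forced: $\pi^{*}\OO(d)[1]$ must appear to the \emph{left} of $\pi^{*}\OO(d+1)$ because $\Hom^1(\pi^{*}\OO(d)[1],\pi^{*}\OO(d+1))\neq 0$, and the $\OO_{p_i}[-1]$ must be rightmost because $\Hom^1(\pi^{*}L,\OO_{p_i}[-1])\neq 0$.

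Finally, I would identify the heart $P'(0,1]$ of $\sigma':=R_a\sigma$ with the glued heart of $\mathcal{E}$. The restriction of $\sigma$ to $\pi^{*}\DD(\P^1)$ is the pullback of standard Mumford stability on $\P^1$; rotating by $a\in(\phi_d,\phi_{d+1})$ gives the right-tilt with respect to the torsion pair $(P_1(a,1],P_1(0,a])$, whose heart is $\pi^{*}[\OO(d+1),\OO(d)[1]]$ (line bundles $\OO(k)$ stay for $k\geq d+1$, shift by $[1]$ for $k\leq d$, and torsion sheaves $\OO_y$ enter via extensions $0\to\OO(d+1)\to\OO_y\to\OO(d)[1]\to 0$). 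Since $a<\min_i\psi_i$, each $\OO_{p_i}[-1]$ remains in the heart of the rotation of $\sigma|_{\lan\OO_{p_1},\ldots,\OO_{p_n}\ran}$. Applying Proposition \ref{char-prop}(1) to the semiorthogonal decomposition of Theorem \ref{semiorth-thm}, the heart of $\sigma'$ equals the gluing of these two restricted hearts, which is precisely the glued heart of $\mathcal{E}$; the central charges tautologically match, so $\sigma'$ is glued from $\mathcal{E}$. The conceptual crux of the argument is pinning down the correct shifts and order within $\mathcal{E}$, since the naive ordering $(\pi^{*}\OO(d+1),\pi^{*}\OO(d)[1],\OO_{p_i}[-1])$ already fails Ext-exceptionality in degree~$1$.
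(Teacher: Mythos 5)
Your proof is essentially correct and reaches the same exceptional collection $(\pi^*\OO(d)[1],\pi^*\OO(d+1),\OO_{p_1}[-1],\ldots,\OO_{p_n}[-1])$ as the paper does, but the verification that the rotated stability really is glued from it is carried out by a different mechanism. The paper compares $R_{\pm a}\sigma$ with the glued stability via Proposition~\ref{FormalDistance}(a), which in principle requires an estimate on $\phi_{\max}(\pi^*\OO(N))$ in $P(0,1]$ to get the object $\pi^*\OO(N)[1]$ inside the $3$-window; this is done by inspecting the Harder--Narasimhan filtration of $\pi^*\OO(N)$ and bounding the degree of possible destabilizers. You instead exploit the structural observation that \emph{rotation preserves gluing}: since $\sigma$ is glued from $\sigma_1,\sigma_2$ with respect to $\langle\pi^*\DD(\P^1),i_*\DD(R)\rangle$, the rotated stability is again glued from $R_a\sigma_1$ and $R_a\sigma_2$, and you then simply compute the two rotated hearts. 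This buys you an argument with no phase estimates, at the cost of having to identify the Kronecker tilt $P_1(a,a+1]=[\pi^*\OO(d)[1],\pi^*\OO(d+1)]$. The reduction to a normalized $\tilde\sigma$ with $I^0=\emptyset$ and the $\Ext$-exceptionality checks are correct; in particular you rightly observe that linear independence of $Z(\OO_{p_i})$ and $Z(\OO_{2p_i})$ is equivalent to that of $Z(\OO_{p_i})$ and $Z(\zeta\ot\OO_{p_i})$, hence invariant under $\otimes\Pic_{\Z_2}(X)$.

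There is one step you left implicit that is actually the crux of the rotation-commutes-with-gluing claim: to invoke Proposition~\ref{char-prop}(1) for $\sigma'=R_a\sigma$ you must verify \emph{all three} of its conditions, and while you check the central charge compatibility and the vanishing $\Hom^{\le 0}(H_1',H_2')=0$, you do not verify $H_i'\subset H'$. This is needed -- without it Proposition~\ref{char-prop}(1) says nothing -- but fortunately it is immediate from Proposition~\ref{char-prop}(3): since $\sigma$ is glued from $\sigma_1,\sigma_2$, we have $P_i(\phi)\subset P(\phi)$ for all $\phi$, hence $H_i'=P_i(a,a+1]\subset P(a,a+1]=H'$. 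You should spell this out, since it is exactly what makes your route work. A further small imprecision: the restriction $\sigma|_{\pi^*\DD(\P^1)}$ is a $\gltwoplus$-transform of pulled-back Mumford stability rather than the Mumford stability itself, but since you only use the heart and the monotonicity of the phases $\phi_k$ this does not affect the argument.

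Incidentally, as written the paper has a sign slip (using $R_{-a}$ while simultaneously requiring $\Im Z_a(\pi^*\OO(N))<0<\Im Z_a(\pi^*\OO(N+1))$, which forces the wrong inequalities for the increasing sequence $\phi_k$); with the rotation taken in your direction the paper's Proposition~\ref{FormalDistance}(a) check for $\pi^*\OO(N)[1]$ becomes nearly trivial as well, so your simplification is in the same spirit as what a corrected version of the paper would do, but packaged more conceptually through the gluing characterization.
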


\begin{proof} By Theorem \ref{cover-thm}, it is enough to check the same statement for
a stability $\sigma$ arising from the construction of Proposition
\ref{cover-thm} with $I^+=\{1,\ldots,n\}$, $I^-=I^0=\emptyset$ and $n_i=1$. We claim that  
in this situation for any sufficiently small $a>0$ 
the rotated stability $R_{-a}\sigma=(Z_a,P_a)$ is glued from an exceptional collection.
Indeed, if $a$ is small enough then we still have $\Im Z_a(\OO_{p_i})<0$ for all $i=1,\ldots,n$.
There is a unique $N\in\Z$ such that $\Im Z_a(\pi^*\OO(N))<0$ and $\Im Z_a(\pi^*\OO(N+1))>0$.
Consider the following full $\Ext$-exceptional collection on $\DD_{\Z_2}(X)$:
\begin{equation}\label{cover-exc-coll}
(\pi^*\OO(N)[1],\pi^*\OO(N+1),\OO_{p_1}[-1],\ldots,\OO_{p_n}[-1]).  
\end{equation}
There exists a glued stability condition with the heart generated by this exceptional collection and with
the central charge $Z_a$. To see that $R_a\sigma$ coincides with this stability condition,
by Proposition \ref{FormalDistance}(a), it is enough to check that all the objects of our exceptional collection lie in $P_a(-1,2]=P(-1-a,2-a]$. Recall that
$$P(0,1]=[ \pi^*\Coh(\P^1),[ \zeta\otimes\OO_{p_i},\OO_{p_i}[-1] \ |\ i=1,\ldots,n] ]$$
Thus, all the objects of the collection \eqref{cover-exc-coll}, except for $\pi^*\OO(N)[1]$,
lie in $P(0,1]\sub P(-1-a,2-a]$. 
Note that by our assumptions, the phases of $\OO_{p_i}[-1]$ are in $(0,1)$. Also, it is easy to see
that $\pi^*\OO(m)\in P(0,1)$ for every $m\in\Z$. The exact sequence
$$0\to\pi^*\OO(m-1)\to\pi^*\OO(m)\to\OO_{\pi^{-1}(y)}\to 0$$
in $P(0,1]$ shows that $\phi_{\max}(\pi^*\OO(m-1))\le\phi_{\max}(\pi^*\OO(m))$.

Now let us consider the exact sequence
$$0\to F\to\pi^*\OO(N)\to G\to 0$$
in $P(0,1]$, where $F$ is the maximal $\si$-destabilizing subobject in $\pi^*\OO(N)$. 
The corresponding long exact cohomology sequence in $\Coh_{\Z_2}(X)$ takes form
$$0\to H^0F\to\pi^*\OO(N)\to H^0G\to H^1F\to 0,$$
so either $H^0F=0$ or $H^0F$ is a line bundle.
In the former case we have
$F=H^1F[-1]\in [\OO_{p_i}[-1]\ |\ i=1,\ldots,n]$.
In the latter case we have $H^0F\simeq \pi^*\OO(m)(-\sum_{j\in J}p_j)$ for some $m\in\Z$ and
$J\sub\{1,\ldots,n\}$. Hence, in the derived category $H^0F$ can be viewed as an extension
of $\pi^*\OO(m)$ by $\oplus_{j\in J}\OO_{p_j}[-1]$.
Therefore, the phase of $F$ is bounded above by the maximum of the phases of
$Z(\OO_{p_i}[-1])$, $i=1,\ldots,n$ and of $Z(\pi^*\OO(m))$.
Note that we have a nonzero map from $\pi^*\OO(m)(-\sum_{i=1}^n 2p_i)\simeq\pi^*\OO(m-n)$
to $\pi^*\OO(N)$, so $m\le N+n$.
By making $a$ small enough we can assume that $N\leq 0$, so in this case we deduce that
$\pi^*\OO(N)\in P(0,\phi)$, where $\phi<1$ is the maximum of the phases of
$Z(\OO_{p_i}[-1])$, $i=1,\ldots,n$ and of $Z(\pi^*\OO(n))$. If in addition $a<1-\phi$ then
we get $\pi^*\OO(N)[1]\in P(1,2-a]\sub P(-1-a,2-a]$ as required.
\end{proof}

\section{Classification of stability conditions in the case $Y\not\simeq\P^1$}
\label{class-sec}

First, let us formulate an abstract version of Lemma 7.2 of \cite{GKR}.
We say that an object $E$ of an abelian (or triangulated category) is {\it rigid} if
$\Hom^1(E,E)=0$.

\begin{prop}\label{GKR-lem} 
Let $\AA$ be an abelian category of homological dimension $1$, and let
$$Y\to E\stackrel{f}{\to} X\to Y[1]$$ 
be an exact triangle in $D^b(\AA)$ with $E\in\AA$,
such that $\Hom^{\leq 0}(Y,X)=0$. Then $X=X_0\oplus X_1[1]$, where $X_0,X_1\in\AA$.
Let $f_0:E\to X_0$ be the map induced by $f$. Then 
\begin{enumerate} 
\item $\coker(f_0)$ and $X_1$ are rigid;
\item $\Hom^*(\coker(f_0),X_0)=\Hom^*(\coker(f_0),X_1)=0$;
\item $\Hom^0(\ker(f_0), X_0)=0$, and the map $\ker(f_0)\to X_1[1]$ induces an
isomorphism $$\Hom^0(X_1,X_1)\simeq\Hom^1(\ker(f_0),X_1).$$
\end{enumerate}
\end{prop}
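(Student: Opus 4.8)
The plan is to first reduce everything to a statement inside $\AA$ using the homological dimension hypothesis, and then extract all the vanishings and rigidity from the long exact sequences attached to the triangle. Since $\AA$ has homological dimension $1$, any object of $D^b(\AA)$ splits as the direct sum of its shifted cohomology objects; applying this to $X$ and using $\Hom^{\le 0}(Y,X)=0$ together with the triangle $Y\to E\to X\to Y[1]$, I would argue that $X$ can have cohomology only in degrees $-1$ and $0$. Indeed, from $E\in\AA$ the triangle gives $H^i(X)=H^{i+1}(Y)$ for $i\ne 0,-1$ and an exact sequence $0\to H^{-1}(X)\to H^0(Y)\to E\to H^0(X)\to H^1(Y)\to 0$; the constraint $\Hom^{\le 0}(Y,X)=0$ forces the negative cohomology of $Y$ to be "absorbed" so that $X$ lives in $[-1,0]$, whence $X=X_0\oplus X_1[1]$ with $X_0=H^0(X)$, $X_1=H^{-1}(X)$ both in $\AA$. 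The map $f\colon E\to X$ then has components $f_0\colon E\to X_0$ and (necessarily) zero into $X_1[1]$, because $\Hom(E,X_1[1])=\Ext^1_\AA(E,X_1)$ need not vanish — here I should be careful; rather, I would note that the octahedron/rotation lets me present $Y$ itself via $\ker(f_0)$, $\coker(f_0)$ and $X_1$.

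Concretely, I would rotate the triangle to $X[-1]\to Y\to E\xrightarrow{f} X$ and feed it through cohomology. Writing $K=\ker(f_0)$ and $C=\coker(f_0)$ in $\AA$, the triangle yields an isomorphism $Y\simeq$ (an object built from $X_1[1]$, $K$, and $C[-1]$): more precisely there are triangles expressing $Y$ in terms of these, and I would make this explicit so that $\Hom^{\le 0}(Y,X)=0$ unwinds into a list of vanishings. The key outputs to chase are: $\Hom^{\le 0}(C[-1],X_0)=0$ and $\Hom^{\le 0}(C[-1],X_1[1])=0$, i.e. $\Hom^{\ge 0}(C,X_0)=\Hom^{\ge -1}(C,X_1)=0$; combined with homological dimension $1$ (so $\Hom^{\ge 2}$ vanishes for free and only degrees $0,1$ matter) this gives $\Hom^*(C,X_0)=\Hom^*(C,X_1)=0$, which is (2). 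For (1): rigidity of $X_1$ follows because $X_1[1]$ is a direct summand of $X$, and $\Hom^1(X_1,X_1)=\Hom(X_1[1],X_1[2])$; I would locate $X_1[1]$ appropriately relative to $Y$ and $E$ and use $\Hom^{\le 0}(Y,X)=0$ plus $E\in\AA$ to kill this group. Rigidity of $C$: from the surjection $E\twoheadrightarrow$ (image of $f_0$) $\hookrightarrow X_0$ and the vanishings already obtained, $\Ext^1_\AA(C,C)$ receives/maps to groups that vanish by (2) and by $\Hom^{\le 0}(Y,X)=0$.

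For (3): $\Hom^0(K,X_0)=0$ should come from the fact that $K\subset E$ maps to $X_0$ only through $f_0$, whose restriction to $K$ is zero by definition, combined again with the semiorthogonality $\Hom^{\le 0}(Y,X)=0$ applied to the piece of $Y$ controlled by $K$. The isomorphism $\Hom^0(X_1,X_1)\xrightarrow{\sim}\Hom^1(K,X_1)$ is the most delicate point and I expect it to be the main obstacle: it is not a vanishing but an identification, so I cannot get it from orthogonality alone. The plan is to produce the connecting map $K\to X_1[1]$ from the relevant triangle, apply $\Hom(-,X_1)$ to that triangle, and show the adjacent terms vanish — on one side using $\Hom^0(K,X_0)=0$ and homological dimension, on the other side using $\Hom^*(C,X_1)=0$ from (2) — so that the long exact sequence degenerates to the claimed isomorphism $\Hom(X_1,X_1)\simeq\Hom(K,X_1[1])$. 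Assembling the triangles carefully so that exactly these two neighboring $\Hom$-groups appear (and no others) is where the bookkeeping has to be done precisely; everything else is diagram-chasing in $D^b(\AA)$ with the homological-dimension-$1$ restriction keeping all Ext-degrees in $\{0,1\}$.
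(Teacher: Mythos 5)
Your plan follows the same route as the paper's proof (which in turn cites the first part of Lemma 7.2 of \cite{GKR} for the setup): use homological dimension $1$ to split $X=X_0\oplus X_1[1]$ and $Y=H^0(Y)\oplus\coker(f_0)[-1]$, read off the short exact sequence $0\to X_1\to H^0(Y)\to\ker(f_0)\to 0$ and the identification $H^1(Y)=\coker(f_0)$ from the cohomology long exact sequence, and unwind $\Hom^{\le 0}(Y,X)=0$ into the vanishing
$\Hom^{\le 0}\bigl(H^0(Y)\oplus\coker(f_0)[-1],\ X_0\oplus X_1[1]\bigr)=0$.
Your treatment of (2) is correct once the superscript typos are fixed: $\Hom^{\le 0}(C[-1],X_0)=0$ is $\Hom^{\le 1}(C,X_0)=0$ (not $\Hom^{\ge 0}$), and together with $\Hom^{\ge 2}=0$ from homological dimension this gives $\Hom^*(C,X_0)=0$; similarly for $X_1$.

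Where you go off track is (3), and to a lesser extent (1). For the isomorphism in (3) you say the flanking terms to be killed are ``$\Hom^0(K,X_0)$ and $\Hom^*(C,X_1)$'', but those are not the neighbours of $\Hom(X_1,X_1)\to\Hom^1(K,X_1)$. The correct move is to apply $\Hom(-,X_1)$ directly to the short exact sequence $0\to X_1\to H^0(Y)\to K\to 0$; the flanking terms are then $\Hom^0(H^0(Y),X_1)$ and $\Hom^1(H^0(Y),X_1)$, both of which vanish because $\Hom^{\le 0}(H^0(Y),X_1[1])=0$, i.e.\ $\Hom^{\le 1}(H^0(Y),X_1)=0$. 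This same sequence, read against $X_0$, gives $\Hom^0(K,X_0)\hookrightarrow\Hom^0(H^0(Y),X_0)=0$, which is the first half of (3). In (1), the rigidity arguments should also be tied to these explicit maps: $\coker(f_0)$ is rigid because $X_0\twoheadrightarrow\coker(f_0)$ and $\Hom^2=0$ make $\Hom^1(C,X_0)\to\Hom^1(C,C)$ surjective with $\Hom^1(C,X_0)=0$ by (2) (your phrasing ``$E\twoheadrightarrow\im(f_0)\hookrightarrow X_0$'' points at the wrong surjection); $X_1$ is rigid because the same short exact sequence gives a surjection $\Hom^1(H^0(Y),X_1)\twoheadrightarrow\Hom^1(X_1,X_1)$ with source zero as above. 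So the missing idea is simply that the single exact sequence $0\to X_1\to H^0(Y)\to\ker(f_0)\to 0$, tested against $X_0$ and $X_1$, carries all of (1) (for $X_1$) and (3); your proposal never writes it down and instead reaches for vanishings that don't sit in the relevant long exact sequences.
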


\begin{proof} The first part of the proof of Lemma 7.2 in \cite{GKR}
gives the statement that $X=X_0\oplus X_1[1]$,
and 
$$\Hom^{\leq 0}(H^0(Y)\oplus\coker(f_0)[-1],X_0\oplus X_1[1])=0,$$
which implies (2). Since $X_0$ surjects onto $\coker(f_0)$, the natural map
$$\Hom^1(\coker(f_0),X_0)\to\Hom^1(\coker(f_0),\coker(f_0))$$ 
is surjective, so we deduce
that $\coker(f_0)$ is rigid. Next, we have an exact sequence
$$0\to X_1\to H^0(Y)\to\ker(f_0)\to 0$$
in $\AA$. Thus, the natural map
$$\Hom^1(H^0(Y),X_1)\to\Hom^1(X_1,X_1)$$ 
is surjective, and we obtain that $X_1$ is rigid. Using the same exact sequence we get (3).
\end{proof}

\begin{lem}\label{rigid-lem}
Assume $g(Y)\geq 1$. Then every rigid object
in $\Coh_{\Z_2}(X)$ is of the form 
$$\bigoplus_{i\in I}\OO_{p_i}^{\oplus m_i}\oplus
\bigoplus_{j\in J}\zeta\otimes\OO_{p_j}^{\oplus n_j},$$
where $I\cap J=\emptyset$.
\end{lem}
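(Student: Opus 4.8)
The plan is to reduce, in two steps, to a local $\Ext$ computation at the ramification points $p_1,\dots,p_n$, using that $\Coh_{\Z_2}(X)$ has cohomological dimension $1$.

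First I would show that a rigid $E\in\Coh_{\Z_2}(X)$ is torsion. Since $X$ is a smooth curve, $E$ splits canonically, hence $\Z_2$-equivariantly, as $E=E_{\mathrm{tors}}\oplus V$ with $V$ a $\Z_2$-equivariant vector bundle, so $\Ext^1_{\Z_2}(V,V)$ is a direct summand of $\Ext^1_{\Z_2}(E,E)=0$. On the other hand the identity endomorphism gives a $\Z_2$-equivariant splitting of the trace map $V^\vee\ot V\to\OO_X$, so $H^1(X,\OO_X)^{\Z_2}$ is a direct summand of $\Ext^1_{\Z_2}(V,V)=H^1(X,V^\vee\ot V)^{\Z_2}$. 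As $\pi$ is finite with $(\pi_*\OO_X)^{\Z_2}=\OO_Y$ and taking $\Z_2$-invariants is exact, $H^1(X,\OO_X)^{\Z_2}=H^1(Y,\OO_Y)$, which is nonzero since $g(Y)\ge 1$. Hence $V=0$. (This is the only place $g(Y)\ge 1$ is used.)

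Next I would localize the support of the torsion sheaf $E$: it is a finite, $\tau$-invariant set of closed points, and localizing gives an equivariant decomposition $E=\bigoplus_q E_q$ indexed by the $\tau$-orbits $q\subset X$, with summands at distinct orbits mutually $\Ext$-orthogonal, so each $E_q$ is rigid. If $q=\pi^{-1}(y)$ is a free orbit ($y\notin R$), then $E_q$ has no $\Hom$'s in any degree to or from $i_*\DD(R)$ for support reasons, so by the semiorthogonal decomposition of Theorem \ref{semiorth-thm} we have $E_q\simeq\pi^*G$ for a torsion sheaf $G$ on $Y$ supported at $y$; full faithfulness of $\pi^*$ gives $\Ext^1_{\DD(Y)}(G,G)=\Ext^1_{\Z_2}(E_q,E_q)=0$, forcing $G=0$ (a nonzero finite-length module over the discrete valuation ring $\OO_{Y,y}$ always has nonzero self-$\Ext^1$). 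So $E$ is supported on $\{p_1,\dots,p_n\}$, and it remains to pin down the summand $E_{p_i}$ at each ramification point separately. For the local analysis I would use that the completion $\widehat{\OO}_{X,p_i}\simeq k[[s]]$ with $\tau(s)=-s$ (the involution acts by $-1$ on the conormal space, as in the proof of Theorem \ref{semiorth-thm}), so equivariant finite-length sheaves at $p_i$ are finite-dimensional modules over $k[[s]]\rtimes\Z_2$, and, $|\Z_2|$ being invertible, $\Ext^\bullet_{\Z_2}(F,G)$ between such sheaves is the $\Z_2$-invariant part of $\Ext^\bullet_{k[[s]]}(\widehat F,\widehat G)$. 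The indecomposable such sheaves are $\OO_{ap_i}$ and $\zeta\ot\OO_{ap_i}$ for $a\ge 1$ (the classification already used in the proof of Lemma \ref{simple-obj-lem}, coming from the module theory over $k[[s]]\rtimes\Z_2$; cf.\ \cite{P-orbifold}). Writing $E_{p_i}$ as a sum of indecomposables, rigidity forces each summand to be rigid and any two summands $S,S'$ to satisfy $\Ext^1_{\Z_2}(S,S')=0$, so it suffices to compute these $\Ext^1$'s. From the equivariant resolution $0\to\zeta^{\ot a}\ot k[[s]]\xrightarrow{s^a}k[[s]]\to\OO_{ap_i}\to 0$ one gets, for any equivariant finite-length $N$ at $p_i$, $\Ext^1_{\Z_2}(\OO_{ap_i},N)=\bigl(\coker(N\xrightarrow{s^a}\zeta^{\ot a}\ot N)\bigr)^{\Z_2}$; with $N=\OO_{ap_i}$ this has dimension $\lceil a/2\rceil$ for $a$ even and $\lfloor a/2\rfloor$ for $a$ odd, hence vanishes exactly for $a=1$, while with $a=1$ and $N=\zeta\ot\OO_{p_i}$ it gives $\Ext^1_{\Z_2}(\OO_{p_i},\zeta\ot\OO_{p_i})\simeq k\neq 0$; the $\zeta$-twisted statements follow since $\zeta\ot(-)$ is an autoequivalence. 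Therefore every indecomposable summand of $E_{p_i}$ is $\OO_{p_i}$ or $\zeta\ot\OO_{p_i}$, and the two cannot occur simultaneously, i.e.\ $E_{p_i}\simeq\OO_{p_i}^{\oplus m_i}$ or $E_{p_i}\simeq(\zeta\ot\OO_{p_i})^{\oplus n_i}$; assembling the $E_{p_i}$ yields the asserted form, with $I$ (resp.\ $J$) the set of indices where $\OO_{p_i}$ (resp.\ $\zeta\ot\OO_{p_i}$) occurs, necessarily disjoint.

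The main obstacle I expect is the bookkeeping of the $\zeta$-twists in the local computation: getting the character twist in the resolution $0\to\zeta^{\ot a}\ot k[[s]]\to k[[s]]\to\OO_{ap_i}\to 0$ right, correctly identifying $\Ext^\bullet_{\Z_2}$ with the $\Z_2$-invariants of $\Ext^\bullet_{k[[s]]}$, and — if one prefers not to simply invoke Lemma \ref{simple-obj-lem} — making the classification of indecomposable equivariant torsion sheaves supported at $p_i$ precise. Everything else is routine.
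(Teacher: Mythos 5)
Your proof is correct, and it takes a genuinely different route from the paper in the key first step. The paper argues via equivariant Serre duality, $\Ext^1_{\Z_2}(F,F)^*\simeq\Hom_{\Z_2}(F,F\ot\om_X)$, and asserts that $g(Y)\ge 1$ gives a nowhere-vanishing section of $\om_Y$, hence a $\Z_2$-invariant section of $\om_X$ vanishing only along $R$, which kills any $F$ not supported on $R$. As stated that first assertion is true only for $g(Y)=1$ (a nonzero section of $\om_Y$ must vanish once $\deg\om_Y=2g-2>0$), and for $g(Y)\ge 2$ the argument needs patching --- e.g.\ global generation of $\om_Y$ to avoid the finite torsion support for each individual $F$, and just any nonzero invariant section for the locally free case. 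Your two-pronged approach sidesteps this entirely: the trace-summand argument (which gives $H^1(Y,\OO_Y)$ as a direct summand of $\Ext^1_{\Z_2}(V,V)$) cleanly kills equivariant vector bundles, and the reduction to $Y$-modules over a DVR cleanly kills torsion at free orbits, with $g(Y)\ge 1$ used only in the first prong. One minor caveat: the $\Z_2$-equivariant splitting of $\mathrm{tr}:V^\vee\ot V\to\OO_X$ by $\mathrm{id}_V$ is a retraction only when $\rk V$ is invertible in $k$, so in positive characteristic you should either add this hypothesis or replace that step with the Serre-duality argument applied to a nonzero invariant section of $\om_X$ (which exists by Riemann--Roch for $g(Y)\ge 1$). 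Finally, you make explicit the local $\Ext^1$ computation at $p_i$ that the paper dismisses as ``easy to check'', and you correctly use $\Ext^1_{\Z_2}(\OO_{p_i},\zeta\ot\OO_{p_i})\neq 0$ to get the disjointness $I\cap J=\emptyset$; this part agrees in spirit with the paper and with its proof of Lemma \ref{simple-obj-lem}.
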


\begin{proof} The fact that $g(Y)\geq 1$ implies that $\om_Y$ has a nowhere vanishing
section. Hence, $\om_X$ has a $\Z_2$-invariant section vanishing only along $R\sub X$.
Therefore, for $F\in \Coh_{\Z_2}(X)$ such that $F$ is not supported on $R$ we have
$$\Hom^1(F,F)^*\simeq\Hom(F,F\otimes\om_X)\neq 0,$$
so $F$ cannot be rigid. Thus, any indecomposable rigid object should be 
supported at one of the ramification points. It is easy to check that the $\Z_2$-sheaf $\OO_{mp_i}$ 
(resp., $\zeta\ot\OO_{mp_i}$) is rigid only for $m=1$.
The assertion follows easily from this.
\end{proof}

Let us denote by $\DD_{p_i}\sub\DD_{\Z_2}(X)$ the triangulated subcategory generated
by equivariant sheaves supported on $p_i$.

\begin{lem}\label{gen1-lem} 
Assume $g(Y)\geq 1$, and let $\sigma=(Z,P)$ be a stability condition on $\DD_{\Z_2}(X)$. 
Then 
\begin{enumerate}
\item the object $\OO_{\pi^{-1}(y)}$ is $\sigma$-stable for every $y\in Y\setminus R$;
\item $\sigma$ restricts to a stability condition on $\DD_{p_i}$;
\item for any exact triangle $A\to \OO_X\to B\to A[1]$ in $\DD_{\Z_2}(X)$ with 
$\Hom^{\leq 0}(A,B)=0$ and nonzero $A$ and $B$, there exists $I\sub\{1,\ldots,n\}$ such that
either  $A=\OO_X(-\sum_{i\in I}m_ip_i)$ and $B=\oplus_{i\in I}\OO_{m_ip_i}$, where all
$m_i$'s are odd, or
$A=\oplus_{i\in I}\zeta\otimes\OO_{p_i}[-1]$ and $B=\OO_X(\sum_{i\in I}p_i)$;
\item there exists a $\sigma$-semistable equivariant line bundle.
\end{enumerate}
\end{lem}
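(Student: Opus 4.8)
For part (1)--(4) the plan is to run everything off a single engine: Proposition~\ref{GKR-lem}, applied inside the heart $\Coh_{\Z_2}(X)$ (which has homological dimension $1$), together with Lemma~\ref{rigid-lem}, whose use is the only place where $g(Y)\geq 1$ really enters. For part (1) I would first prove the $\sigma$-independent statement that $\OO_{\pi^{-1}(y)}$ is semistable for \emph{every} stability condition. If it were not $\sigma$-semistable, split off a top-and-bottom part of its Harder--Narasimhan filtration to get a triangle $A\to\OO_{\pi^{-1}(y)}\to B\to A[1]$ with $A,B\neq 0$ and $\phi_{\min}(A)>\phi_{\max}(B)$, hence $\Hom^{\leq 0}(A,B)=0$. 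Proposition~\ref{GKR-lem} then gives $B=B_0\oplus B_1[1]$ with $\coker(f_0)$ and $B_1$ rigid, so supported on $R$ by Lemma~\ref{rigid-lem}. Since $\OO_{\pi^{-1}(y)}$ is a \emph{simple} object of $\Coh_{\Z_2}(X)$ whose support is disjoint from $R$, the map $f_0$ is zero or a monomorphism. If $f_0=0$, parts (2) and (3) of Proposition~\ref{GKR-lem} force $B_0=0$ and $\Hom^0(B_1,B_1)\cong\Hom^1(\OO_{\pi^{-1}(y)},B_1)=0$ (disjoint supports), so $B=0$, impossible. If $f_0$ is a monomorphism, part (3) again gives $B_1=0$, and then $A\cong\coker(f_0)[-1]$ while $\coker(f_0)$ is a direct summand of $B$ (the triangle splits because $\Ext^1(\coker(f_0),\OO_{\pi^{-1}(y)})=0$); but then $\phi_{\min}(A)=\phi_{\min}(\coker(f_0))-1<\phi_{\max}(B)$, contradicting $\phi_{\min}(A)>\phi_{\max}(B)$. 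So $\OO_{\pi^{-1}(y)}$ is always semistable; stability then follows by a finite-length argument in $P(\phi)$, where a would-be $\sigma$-stable proper subobject is, by Lemma~\ref{simple-obj-lem} and the $\Hom$-vanishings coming from disjointness of supports, forced to be a vector bundle $V$, but then $\OO_{\pi^{-1}(y)}/V$ is a shift $V'[1]$ of a vector bundle lying in $P(\phi)$, and $\Ext$-negativity on the curve $X$ rules out its having simple quotients in $P(\phi)$.

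Part (3) runs the same machine with $\OO_X$ in place of $\OO_{\pi^{-1}(y)}$: Proposition~\ref{GKR-lem} gives $B=B_0\oplus B_1[1]$ with $\coker(f_0),B_1$ rigid, hence sums of copies of the $\OO_{p_i}$ and $\zeta\otimes\OO_{p_i}$ by Lemma~\ref{rigid-lem}; the cohomology sequence of the triangle shows $H^1(A)=\coker(f_0)$ and $H^0(A)$ an extension of $\ker(f_0)$ by $B_1$, with $\ker(f_0)$ either $0$ or a sub-line-bundle $\OO_X(-D)$ for an effective $\Z_2$-invariant $D$. The decisive inputs are then the equivariant computations $\Ext^1_{\Z_2}(\OO_{p_i},\OO_X)=0$, $\Ext^1_{\Z_2}(\zeta\otimes\OO_{p_i},\OO_X)=k$, $\Ext^1_{\Z_2}(\OO_{p_i},\OO_{2p_i})=0$ (all coming from $\tau$ acting by $-1$ on the cotangent line at $p_i$) and the sequence $0\to\OO_X\to\OO_X(p_i)\to\zeta\otimes\OO_{p_i}\to 0$; feeding these into Proposition~\ref{GKR-lem}(2)--(3) forces all multiplicities to be $1$ and pins down $\ker(f_0),B_0,B_1$, yielding the first listed form when $f_0$ is surjective ($A=\OO_X(-D)$) and the second when $f_0$ is a monomorphism ($B=\OO_X(\sum_{i\in I}p_i)$). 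The ``mixed'' case, $f_0$ neither surjective nor a monomorphism, should be excluded by feeding the vanishing $\Hom^{\leq 0}(A,B)=0$ back in directly.

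Part (4) then follows formally from (3), which is $\sigma$-independent and applies to every $\Z_2$-equivariant line bundle after twisting. Suppose no such line bundle is $\sigma$-semistable; the Harder--Narasimhan filtration of $\OO_X$ has finitely many factors, say $m\geq 2$. Put $L_0=\OO_X$, and given $L_k$, an equivariant line bundle whose HN filtration has $m-k$ factors, split off its top HN factor by a triangle $A\to L_k\to B\to A[1]$: by (3), either $A$ is an equivariant line bundle, which is impossible since as an HN factor it would be a $\sigma$-semistable equivariant line bundle, or $B=L_k(\sum_{i\in I}p_i)$ is an equivariant line bundle whose HN filtration has $m-k-1$ factors, which we take to be $L_{k+1}$. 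After $m-1$ steps $L_{m-1}$ is an equivariant line bundle with a single HN factor, hence $\sigma$-semistable, a contradiction. For part (2) I would check that $P(\phi)\cap\DD_{p_i}$ is a slicing of $\DD_{p_i}$, i.e.\ that the HN factors of any $E\in\DD_{p_i}$ stay in $\DD_{p_i}$: reducing to $E$ a finite-length equivariant sheaf at $p_i$, applying Proposition~\ref{GKR-lem} to a two-step HN piece of $E$, and combining Lemma~\ref{rigid-lem}, the orthogonality of $\DD_{p_i}$ to the $\DD_{p_j}$ ($j\neq i$) and to $\pi^{*}\DD(Y)$, part (1), and the semiorthogonal decomposition~\eqref{semiorth-dec2}, one should force the pieces to remain supported at $p_i$.

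The main obstacle I anticipate is the combinatorial bookkeeping in (3): organizing the $\Z_2$-equivariant $\Ext$-computations at the ramification points together with the structure of rigid objects so as to land on \emph{exactly} the two stated forms, in particular cleanly excluding the mixed configurations. The other point that will need care is the verification in (2) that the induced Harder--Narasimhan filtrations genuinely close up inside $\DD_{p_i}$; once Proposition~\ref{GKR-lem} and Lemma~\ref{rigid-lem} are in hand, parts (1) and (4) should be short.
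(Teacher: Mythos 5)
Your overall strategy is the same as the paper's: run \ Proposition~\ref{GKR-lem} inside $\Coh_{\Z_2}(X)$ against a destabilizing triangle, use Lemma~\ref{rigid-lem} to confine all rigid pieces to $R$, and then argue by support/$\Hom$-vanishing. That is exactly the paper's engine, and your treatment of (4) via descending induction on the number of HN factors is a reasonable way to make precise what the paper dispatches with ``follows easily from (3)''. However, there are real gaps.

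First, in part (1) your argument only establishes \emph{semistability}; the sentence you offer for stability does not work. You claim a would-be stable proper subobject of $\OO_{\pi^{-1}(y)}$ in the slice $P(\phi)$ is ``forced to be a vector bundle $V$'', and that the quotient is then ``a shift $V'[1]$ of a vector bundle''. Neither statement is justified: the endosimple objects of $\DD_{\Z_2}(X)$ mapping nontrivially to $\OO_{\pi^{-1}(y)}$ include shifted vector bundles (e.g.\ $V[1]$, since $\Ext^1(V,\OO_{\pi^{-1}(y)})$ need not vanish), and the cone of a sheaf map $V\to\OO_{\pi^{-1}(y)}$ has both $H^{-1}$ a bundle and $H^0$ a torsion sheaf, so it is not a shift of a bundle. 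The paper instead pushes the GKR-lem computation all the way to the conclusion that $\coker(f_0)=0$ (using $\Hom(\coker(f_0),X_0)=0$ together with the splitting supplied by disjoint supports), so $f_0$ is forced to be an isomorphism and hence $Y=0$; you stop short and replace this with a phase-inequality contradiction that only reaches semistability. You need to rerun the GKR argument on a destabilizing short exact sequence inside $P(\phi)$ rather than waving at $\Ext$-negativity.

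Second, part (3) is acknowledged by you to be incomplete, and the gap is genuine. The paper's proof has two nontrivial computations you do not reproduce: (i) in the injective case, using the extension class map $\Hom(\coker f,\coker f)\to\Ext^1(\coker f,\OO_X)$ being an isomorphism to force each multiplicity $n_j=1$ (a dimension count $n_j^2=n_j$); and (ii) in the non-injective case, showing $\ker f=\OO_X(-\sum m_ip_i)$ with all $m_i$ odd, $\im f=\oplus\OO_{m_ip_i}$, and then \emph{eliminating} $\coker f$ by combining rigidity with $\Hom^*(\coker f,B)=0$, so that the ``mixed'' case (neither injective nor surjective) collapses into the first form rather than being a separate case to be ruled out. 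You correctly identify the needed equivariant $\Ext$-facts but do not carry the case analysis to either conclusion.

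Finally, your sketch of part (2) is much vaguer than the paper's. The point the paper makes is a delicate support-tracking argument: writing $X_0=\im(f_0)\oplus$(stuff) is not available, but one decomposes $\coker(f_0)=C\oplus C'$ and $X_1=A\oplus A'$ by support at $p_i$ versus $R\setminus p_i$, then uses the splitting $\Ext^1(C',\im f_0)=0$ plus $\Hom(\coker f_0,X_0)=0$ to kill $C'$, and uses that the map $\ker(f_0)\to X_1[1]$ factors through $A[1]$ together with Proposition~\ref{GKR-lem}(3) to kill $A'$. ``One should force the pieces to remain supported at $p_i$'' is the correct goal but does not identify this mechanism, and in particular your appeal to part (1) and to the semiorthogonal decomposition~\eqref{semiorth-dec2} does not obviously play a role in the actual proof.

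In summary: same method as the paper, parts (4) and the semistability half of (1) are essentially right, but the stability claim in (1), the case analysis in (3), and the support-tracking in (2) are genuine gaps as written.
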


\begin{proof} (1) Consider the triangle 
$$Y\to\OO_{\pi^{-1}(y)}\stackrel{f}{\to} X\to Y[1]$$ 
with $Y\in P(-\infty,t]$, $X\in P(t,+\infty)$, and assume that $X\neq 0$.
Then by Proposition \ref{GKR-lem}, we have $X=X_0\oplus X_1[1]$,
where $X_0$ and $X_1$ are equivariant coherent sheaves, and $X_1$ is rigid. 
By Lemma \ref{rigid-lem},  
$X_1$ is supported at $R$. Hence, $\Hom(X_1,X_1)=\Hom^1(\ker(f_0),X_1)=0$ (the isomorphism
comes from Proposition \ref{GKR-lem}(3)), which implies that $X_1=0$.
On the other hand, since $X_0\neq 0$, the condition $\Hom(\coker(f_0),X_0)=0$ (see
Proposition \ref{GKR-lem}(2)) implies that 
the map $f_0:\OO_{\pi^{-1}(y)}\to X_0$ is nonzero, so it is an embedding. But $\coker(f_0)$
is also rigid (see Proposition \ref{GKR-lem}(1)), so it is supported at $R$. Therefore, the extension
$$0\to \OO_{\pi^{-1}(y)}\to X_0\to \coker(f_0)\to 0$$
splits. Since $\Hom(\coker(f_0),X_0)=0$, this implies that $f_0$ is an isomorphism. 

\medskip

\noindent
(2)  Consider the triangle 
$Y\to E\stackrel{f}{\to} X\to Y[1]$ 
with $Y\in P(-\infty,t]$, $X\in P(t,+\infty)$, where $E$ is a sheaf supported at $p_i$,
and assume that $X\neq 0$. Applying Proposition \ref{GKR-lem} and Lemma \ref{rigid-lem}
again we see that $\coker(f_0)$ is supported at $R$, so we can write
$\coker(f_0)=C\oplus C'$, where $C$ is supported at $p_i$ and $C'$ is supported
at $R-p_i$. Since $\im(f_0)$ is supported at $p_i$, the extension
$$0\to\im(f_0)\to X_0\to\coker(f_0)\to 0$$
splits over $C'$. Since $\Hom(\coker(f_0),X_0)=0$, it follows that $C'=0$, so
$X_0$ is supported at $p_i$. Similarly, we have $X_1=A\oplus A'$, where
$A$ is supported at $p_0$ and $A'$ is supported at $R-p_i$. To prove that $A'=0$
we use the fact that the map $\ker(f_0)\to X_1$ factors through $A$,
so $\Hom^0(A',A')$ maps to zero under the induced map
$\Hom^0(X_1,X_1)\to\Hom^1(\ker(f_0),X_1)$. But the latter map is an isomorphism by
Proposition \ref{GKR-lem}(3), so we deduce that $A'=0$. Hence, $X$ is supported at $p_i$, and
so $Y$ is also supported at $p_i$.

\medskip

\noindent (3) By Proposition \ref{GKR-lem}, we have $B=B_0\oplus B_1[1]$, where
$B_0$ and $B_1$ are equivariant sheaves. The fact that $B_1$ is rigid (hence, torsion)
implies that $\Hom^1(\OO_X,B_1)=0$. Together with Proposition \ref{GKR-lem}(3) this
easily leads to $B_1=0$.
Let $f:\OO_X\to B=B_0$ be the map in our exact triangle. Assume first that $f$ is injective. Then 
$B$ is an extension of a rigid object
$\coker(f)$ by $\OO_X$, such that $\Hom^*(\coker(f),B)=0$. By Lemma \ref{rigid-lem},
we have
$$\coker(f)\simeq\bigoplus_{i\in I}P_i\oplus\bigoplus_{j\in J}Q_j,$$
where $P_i=\OO_{p_i}^{\oplus m_i}$, $Q_j=\zeta\ot\OO_{p_j}^{\oplus n_j}$.
Since $\Hom^1(P_i,\OO_X)=0$, the extension
$$0\to \OO_X\to B\to\coker(f)\to 0$$
splits over $P_i$, which implies that $P_i=0$ (since $\Hom(\coker(f),B)=0$).
Next, the map $\Hom(\coker(f),\coker(f))\to\Hom^1(\coker(f),\OO_X)$ induced by
the above extension is an isomorphism. Hence, for every $j$ the induced map
$\Hom(P_j,P_j)\to \Hom^1(P_j,\OO_X)$ is an isomorphism. The source of this map has
dimension $n_j^2$, while the target has dimension $n_j$, so we get that $n_j=1$.
This gives the required form of $A$ and $B$ in this case. 

Next, assume that $\ker(f)\neq 0$.
Then $\ker(f)$ is isomorphic to $\OO_X(-\sum_i m_ip_i)$, and $\im(f)\simeq\oplus_i \OO_{m_ip_i}$.
The condition $\Hom(\ker(f),B)=0$ implies that $\Hom(\ker(f),\im(f))=0$. Hence, all nonzero
$m_i$'s are odd. Let $I$ denote the set of $i$ for which $m_i\neq 0$.
The extension
$$0\to\im(f)\to B\to \coker(f)\to 0$$
still has the property that $\Hom^*(\coker(f),B)=0$. 
This implies that $\coker(f)$ is supported at $\{p_i\ |\ i\in I\}$. Hence, $B$ is also supported
at this set. The condition $\Hom(\ker(f),B)=0$ implies that all indecomposable direct sumands of $B$
are of the form $\OO_{np_i}$, where $i\in I$ and $n$ is odd (and there is at least one such factor
for every $i\in I$). Now the condition $\Hom^*(\coker(f),B)=0$ together with the
rigidity of $\coker(f)$ (using Lemma \ref{rigid-lem}) implies that $\coker(f)=0$.

\medskip

\noindent (4) It follows easily from (3) that one of the HN-factors of $\OO_X$
is a line bundle.
\end{proof}

\begin{lem}\label{O-lem} 
Assume $g(Y)\geq 1$, and let $\sigma=(Z,P)$ be a locally finite stability condition on
$\DD_{\Z_2}(X)$ such that $\OO_{2p_i}$ is semistable for every $i=1,\ldots,n$, and all 
$\OO_{\pi^{-1}(y)}$ for $y\in Y\setminus R$ are stable of phase $1$. Then
\begin{enumerate}
\item $\OO_{2p_i}\in P(1)$ for every $i$;
\item for every line bundle $M$ on $Y$ one has $\pi^*M\in P(0,1)$.
\end{enumerate}
\end{lem}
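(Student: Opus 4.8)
The plan is to prove (1) first and to feed it into (2). Throughout write $v_Z:=Z(\OO_{\pi^{-1}(y)})$; by hypothesis $\OO_{\pi^{-1}(y)}$ is stable of phase $1$, so $v_Z\in\R_{<0}$.

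For (1) the point is that $\OO_{2p_i}$ and all the sheaves $\OO_{\pi^{-1}(y)}$, $y\in Y\setminus R$, are fibres of the flat finite morphism $\pi$, hence have one and the same class in $\NN(\DD_{\Z_2}(X))$; explicitly $[\OO_{2p_i}]=[\OO_{p_i}]+[\zeta\otimes\OO_{p_i}]=[\OO_{\pi^{-1}(y)}]$, using the filtrations of $\OO_{2p_i}$ and $\zeta\otimes\OO_{2p_i}$ recalled in the proof of Proposition~\ref{cover-prop}. Therefore $Z(\OO_{2p_i})=v_Z\in\R_{<0}$, which has phase $1$; since $\OO_{2p_i}$ is $\sigma$-semistable by hypothesis, it lies in $P(1)$. (If one prefers not to invoke numericality, the same conclusion follows from upper/lower semicontinuity of $\phi_{\max},\phi_{\min}$ along the flat family $\{\OO_{\pi^{-1}(y)}\}_{y\in Y}$, which gives $\phi_{\max}(\OO_{2p_i})\ge 1\ge\phi_{\min}(\OO_{2p_i})$ and hence, for the semistable object $\OO_{2p_i}$, phase $1$.)

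For (2) fix a line bundle $M$ on $Y$; one must show $\phi_{\min}(\pi^*M)>0$ and $\phi_{\max}(\pi^*M)<1$. The common device is: take the bottom (resp.\ top) HN-factor $E$ of $\pi^*M$, write the truncation triangle $F\to\pi^*M\to E$ (resp.\ $E\to\pi^*M\to F$), tensor it by the line bundle $\pi^*M^{-1}$ — an autoequivalence carrying $\pi^*M$ to $\OO_X$ and preserving the vanishing $\Hom_\DD^{\le 0}(F,E)=0$ — and apply Lemma~\ref{gen1-lem}(3). Since $\pi^*M$ is trivial near each $p_j$, tensoring the torsion sheaves supported there by $\pi^*M$ does nothing, so the conclusion is that $E$ is either a line bundle of the form $\pi^*M(-\sum_{j\in I}m_jp_j)$ (resp.\ $\pi^*M(\sum_{j\in I}p_j)$), with the $m_j$ odd, or a direct sum of copies of $\zeta\otimes\OO_{p_j}[-1]$ (resp.\ of $\OO_{m_jp_j}$). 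One then plays this against part (1): $\OO_{2p_j}\in P(1)$, the inclusion $\zeta\otimes\OO_{p_j}\hookrightarrow\OO_{2p_j}$ and the surjections $\OO_{2p_j}\twoheadrightarrow\OO_{p_j}$, $\OO_{m_jp_j}\twoheadrightarrow\OO_{2p_j}$; and against the fact that every equivariant line bundle $L$ restricts to $\OO_{\pi^{-1}(y)}\in P(1)$ on a general fibre, so $\Hom(L,\OO_{\pi^{-1}(y)})\ne 0$. For instance for $\phi_{\max}$: the option $E\cong\bigoplus\zeta\otimes\OO_{p_j}[-1]$ would put $\zeta\otimes\OO_{p_j}$ in $P(\ge 2)$, contradicting $\Hom(\zeta\otimes\OO_{p_j},\OO_{2p_j})\ne 0$ with $\OO_{2p_j}\in P(1)$; so if $\phi_{\max}(\pi^*M)\ge 1$ then $E$ — or $\pi^*M$ itself, when it happens to be semistable — is an equivariant line bundle, and $\Hom(E,\OO_{\pi^{-1}(y)})\ne 0$ forces $\phi(E)=1$. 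Thus everything reduces to the statement that \emph{there is no $\sigma$-semistable equivariant line bundle $L$ of phase $1$.}

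Here one genuinely uses stability, not just semistability, of $\OO_{\pi^{-1}(y)}$, i.e.\ that it is simple in the abelian category $P(1)$: for $y\in Y\setminus R$ the restriction $L\to L|_{\pi^{-1}(y)}\cong\OO_{\pi^{-1}(y)}$ is a nonzero morphism between objects of $P(1)$, hence an epimorphism in the heart $P(0,1]$, and from the cohomology sequence of the defining triangle one checks that its kernel in $P(0,1]$ is $L(-y):=L\otimes\pi^*\OO_Y(-y)$. Then $Z(L(-y))=Z(L)-v_Z=Z(L)+|v_Z|$ is real, and since $L(-y)\in P(0,1]$ its HN-factors have nonnegative imaginary parts summing to $0$, so they all have phase $1$: $L(-y)\in P(1)$, again semistable, \emph{provided} $Z(L(-y))<0$. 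Iterating, $L(-ky)\in P(1)$ with $Z(L(-ky))=Z(L)+k|v_Z|$ for as long as this stays negative; but for $k$ large it becomes $\ge 0$, which no nonzero object of $P(0,1]$ admits — a contradiction. This gives $\phi_{\max}(\pi^*M)<1$; the argument for $\phi_{\min}(\pi^*M)>0$ is parallel, the one place needing extra care being the option where the bottom HN-factor is a sum $\bigoplus\OO_{m_jp_j}$ with some $m_j\ge 3$, handled by restricting $\sigma$ to $\DD_{p_j}$ (Lemma~\ref{gen1-lem}(2)) and using that $\OO_{p_j}$ and $\zeta\otimes\OO_{p_j}$ stay semistable there, so that the inclusion $\OO_{p_j}\hookrightarrow\OO_{m_jp_j}$ and the surjection $\OO_{2p_j}\twoheadrightarrow\OO_{p_j}$ become incompatible. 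I expect this bookkeeping — and, more generally, keeping track of the equivariant structures on the torsion pieces produced by Lemma~\ref{gen1-lem}(3) after the twist by $\pi^*M^{-1}$ — to be the most delicate point, though it is routine.
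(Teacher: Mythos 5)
Your argument for part (1) has a genuine gap. Knowing that $Z(\OO_{2p_i})=v_Z\in\R_{<0}$ (which is a correct computation, since $\OO_{2p_i}=\OO_{\pi^{-1}(p_i)}$ is a fibre of $\pi$) only pins down the phase of the semistable object $\OO_{2p_i}$ modulo $2$; a priori it could lie in $P(1+2k)$ for any $k\in\Z$. To fix the integer one needs an actual bound on $\phi(\OO_{2p_i})$, which your first argument simply does not supply. Your fallback — semicontinuity of $\phi_{\max},\phi_{\min}$ along the flat family $\{\OO_{\pi^{-1}(y)}\}_{y\in Y}$ — would do the job, but that is a substantive statement about HN filtrations in families that the paper neither proves nor cites, and in this generality (an arbitrary locally finite stability condition) it is not free. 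The paper instead gets the bound from Lemma~\ref{gen1-lem}(4): there exists a $\sigma$-semistable equivariant line bundle $L$, which, via the nonzero maps $L\to\OO_{\pi^{-1}(y)}$, $\OO_{\pi^{-1}(y)}\to L[1]$ and a local-finiteness argument, must lie in $P(t)$ for some $t\in(0,1)$; the nonzero maps $L\to\OO_{2p_i}$ and $\OO_{2p_i}\to L[1]$ then sandwich $\phi(\OO_{2p_i})\in[t,t+1]$, and combined with $Z(\OO_{2p_i})\in\R_{<0}$ this forces phase exactly $1$. Your proof of (1) does not invoke \ref{gen1-lem}(4) at all, so the mechanism that resolves the $+2\Z$ ambiguity is missing.

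For part (2) (granting (1)), your route is genuinely different from the paper's and is correct. The paper rules out $L\in P[1,+\infty)$ and $L\in P(-\infty,0]$ for every equivariant line bundle $L$ by a local-finiteness count: if the relevant HN factor had phase exactly $1$ (resp.\ $0$), the infinitely many pairwise non-isomorphic simple objects $\OO_{\pi^{-1}(y)}\in P(1)$ would give infinitely many simple quotients (resp.\ subobjects) of a finite-length object. You instead reduce to ``no $\sigma$-semistable equivariant line bundle of phase $1$'' and prove it by an iteration: for $L\in P(1)$, the kernel in the heart of the surjection $L\twoheadrightarrow\OO_{\pi^{-1}(y)}$ is $L(-y)\in P(1)$ with $Z(L(-y))=Z(L)+|v_Z|$, and iterating drives the central charge across $0$, a contradiction. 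That is a nice, more elementary argument. Two comments on your write-up: first, the ``one place needing extra care'' you flag — the case where the bottom HN-factor is $\bigoplus\OO_{m_jp_j}$ with $m_j\ge3$ — is actually immediate from (1) by the nonzero composite $\OO_{2p_j}\to\OO_{p_j}\hookrightarrow\OO_{m_jp_j}$ with $\OO_{2p_j}\in P(1)$ and $\OO_{m_jp_j}$ of phase $\le0$; no restriction to $\DD_{p_j}$ is needed. Second, the case that actually deserves the extra sentence is the one where the bottom HN-factor is a semistable \emph{line bundle} of phase $0$: there your iteration runs the other way (take the cokernel in $P(1)$ of the injection $\OO_{\pi^{-1}(y)}\hookrightarrow F[1]$, i.e.\ twist by $\pi^*\OO_Y(+y)$, with central charge $Z(F(ky))=Z(F)+kv_Z\to-\infty$), which is indeed ``parallel'' but should be said rather than left implicit.
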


\begin{proof} (1) By Lemma \ref{gen1-lem}(4), we know that there exists a $\sigma$-semistable
equivariant line bundle $L$. Since for $y\in Y\setminus R$
we have nonzero morphisms $L\to \OO_{\pi^{-1}(y)}$
and $\OO_{\pi^{-1}(y)}\to L[1]$, it follows that $L\in P(t)$ for some $t\in [0,1]$. Furthermore,
we cannot have $t=0$ or $t=1$, since there can be only finite number of nonisomorphic
simple objects of phase $t$ with nonzero maps to (or from) $L$.
Now we have nonzero maps $L\to\OO_{2p_i}$ and $\OO_{2p_i}\to L[1]$ which implies that
the phase of $\OO_{2p_i}$ is in the interval $[t,t+1]$. But $Z(\OO_{2p_i})$ has phase $1$,
so the phase of $\OO_{2p_i}$ is equal to $1$.

\medskip

\noindent (2) Tensoring $\sigma$ with $M^{-1}$ we immediately reduce to the case $M=\OO_Y$.
First, we observe that for any equivariant line bundle $L$ one cannot have
$L\in P[1,+\infty)$ or $L\in P(-\infty,0]$. Indeed, this follows from the existence of nonzero morphisms
$L\to\OO_{\pi^{-1}(y)}$ and $\OO_{\pi^{-1}(y)}\to L[1]$ as in part (1). Let us consider the canonical
exact triangle 
$$A\to\OO_X\to B\to A[1]$$ 
with $A\in P[1,+\infty)$ and $B\in P(-\infty,1)$. By Lemma
\ref{gen1-lem}(3) and the above observation, we obtain that $A=\oplus_{i\in I}\zeta\otimes\OO_{p_i}[-1]$
for some $I\sub\{1,\ldots,n\}$. But this implies that for $i\in I$ one has 
$\zeta\otimes\OO_{p_i}\in P[2,+\infty)$, which contradicts to the existence of a nonzero morphism
from $\zeta\otimes\OO_{p_i}$ to $\OO_{2p_i}\in P(1)$. Therefore, $\OO_X\in P(-\infty,1)$.
Now consider the exact triangle 
$$C\to\OO_X\to D\to C[1]$$ 
with $C\in P(0,1)$ and $D\in P(-\infty,0]$.
Using Lemma \ref{gen1-lem}(3) we obtain that $D=\oplus_{i\in I}\OO_{m_ip_i}$, where all $m_i$'s
are odd. But we have a nonzero map $\OO_{2p_i}\to\OO_{p_i}\hookrightarrow\OO_{m_ip_i}$, 
which is a contradiction since $\OO_{2p_i}\in P(1)$. Hence, $D=0$ and
$\OO_X\in P(0,1)$.
\end{proof}

Let us set $\SS_i=\Stab(\DD_{p_i})$. This is a two-dimensional complex manifold that
we are going to describe explicitly below. Note that these spaces for different points $p_i$
are canonically isomorphic, so we will sometimes skip the index $i$ below.

\begin{prop}\label{point-prop} 
(a) Let $U^+\sub\SS$ (resp., $U^-\sub\SS$) denote the subset of $\sigma$ such that
$\OO_{p_i}$ (resp., $\zeta\otimes\OO_{p_i}$) is $\sigma$-stable. Let also
$W^+\sub\SS$ (resp., $W^-\sub\SS$) denote the subset of stabilities with respect to
which $\OO_{2p_i}$ (resp., $\zeta\otimes\OO_{2p_i}$) is semistable. Then $U^+$ and $U^-$
are open, $W^+$ and $W^-$ are closed, and
$$\SS=U^+\cup U^-=W^+\cup W^-.$$ 
The subset $W^+\cap W^-$ is contained in $U^+\cap U^-$ and consists of $\sigma$ such that
$\OO_{p_i}$ and $\zeta\ot\OO_{p_i}$ are stable of the same phase.
The subset
$U^+\cap U^-\cap W^+$ is characterized in $U^+\cap W^+$ by the condition
$\phi(\OO_{p_i})<\phi(\OO_{2p_i})+1$.
Similarly, the subset $U^+\cap U^-\cap W^-$ is characterized in $U^+\cap W^-$
by the inequality $\phi(\OO_{p_i})>\phi(\zeta\otimes\OO_{2p_i})-1$.

\noindent
(b) There is a holomorphic submersion $f_i:\SS_i\to\C$ such that
$\exp(\pi f_i)$ is equal to $Z(\OO_{2p_i})$, and $\Im(f_i)$ is equal to the phase of $\OO_{2p_i}$ on
$W^+$ and to the phase of $\zeta\otimes\OO_{2p_i}$ on $W^-$. 
The action of the subgroup $\R\times\R^*_{>0}\sub\gltwoplus$
of rotations and rescalings induces an isomorphism of complex manifolds
$$\C\times \Sigma\widetilde{\to}\SS_i,$$
such that $f_i$ corresponds to the projection to the first factor,
where $\Sigma=f_i^{-1}(0)$ is a (noncompact) Riemann surface.

\noindent
(c) There is a well defined branch of $\frac{1}{\pi}\log Z(\OO_{p_i})$ 
(resp., $\frac{1}{\pi}\log Z(\zeta\otimes\OO_{p_i})$) on $U^+$ (resp., $U^-$)
that defines an isomorphism $\Sigma\cap U^+\simeq\C\setminus\R_{\ge 0}$
(resp., $\Sigma\cap U^-\simeq\C\setminus\R_{\ge 0}$). Under both these
isomorphisms $\Sigma\cap U^+\cap U^-$ is mapped to the subset
of $\C\setminus\R_{\ge 0}$ consisting of $z$ with $|\Im z|<1$.

\noindent
(d) The Riemann surface $\Sigma$ is simply connected and of parabolic type.
More precisely, there exists an isomorphism $\Sigma\simeq \C$
under which the function $Z(\OO_{p_i})$ on $\Sigma$ corresponds to
the function
$$z\mapsto \frac{1}{2}+\frac{1}{\sqrt{\pi}}\int_0^z e^{-t^2}dt.$$
\end{prop}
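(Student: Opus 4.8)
Everything reduces to an explicit study of the small triangulated category $\DD_{p_i}$, so I would start by making it concrete. Near a ramification point the covering is \'etale-locally $\operatorname{Spec}k[[t]]\to\operatorname{Spec}k[[t^{2}]]$ with $\Z_{2}$ acting by $t\mapsto -t$; combined with the fact (used in Lemma~\ref{coh-lem}) that $\Coh_{\Z_2}(X)$ has cohomological dimension one, this identifies $\DD_{p_i}$ with $D^{b}(\Coh_{\Z_2}(\{p_i\}))$, where $\Coh_{\Z_2}(\{p_i\})$ is a finite length \emph{hereditary} abelian category with exactly two simple objects $S_{+}=\OO_{p_i}$ and $S_{-}=\zeta\otimes\OO_{p_i}$; these are mutually orthogonal and exceptional, and $\Ext^{1}(S_{+},S_{-})\cong\Ext^{1}(S_{-},S_{+})\cong k$, all other $\Ext$'s between simples vanishing. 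Hence $K_{0}(\DD_{p_i})=\Z[S_{+}]\oplus\Z[S_{-}]$, so $\Hom(K_{0}(\DD_{p_i}),\C)\cong\C^{2}$; the indecomposables of $\Coh_{\Z_2}(\{p_i\})$ are exactly the $\OO_{mp_i}$ and $\zeta\otimes\OO_{mp_i}$ ($m\ge 1$), one knows precisely which maps among these and the $S_{\pm}$ are nonzero, and there are non-split sequences $0\to S_{-}\to\OO_{2p_i}\to S_{+}\to 0$ and $0\to S_{+}\to\zeta\otimes\OO_{2p_i}\to S_{-}\to 0$, so in particular $[\OO_{2p_i}]=[\zeta\otimes\OO_{2p_i}]=[S_{+}]+[S_{-}]$.

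For part (a) I would run a case analysis of a stability $\si=(Z,P)\in\SS_i$ according to the HN filtrations of $S_{+}$ and $S_{-}$, the structural facts above providing the needed combinatorial input. One checks that for every $\si$ at least one of $S_{+},S_{-}$ is $\si$-stable, giving $\SS_i=U^{+}\cup U^{-}$, and at least one of $\OO_{2p_i},\zeta\otimes\OO_{2p_i}$ is semistable, giving $\SS_i=W^{+}\cup W^{-}$; openness of $U^{\pm}$ and closedness of $W^{\pm}$ are the standard facts about loci where a fixed object is stable, resp.\ semistable. For $W^{+}\cap W^{-}\subset U^{+}\cap U^{-}$ and the description of $W^{+}\cap W^{-}$: if both $\OO_{2p_i}$ and $\zeta\otimes\OO_{2p_i}$ are semistable, comparing them via the two exact sequences with $S_{+}$ and $S_{-}$ forces $S_{\pm}$ to be stable of one and the same phase; and the two phase inequalities cutting out $U^{+}\cap U^{-}$ inside $U^{\pm}\cap W^{\pm}$ are exactly the conditions that the relevant subobject, resp.\ quotient, does not destabilise $\OO_{2p_i}$, resp.\ $\zeta\otimes\OO_{2p_i}$.

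For part (b): since $[\OO_{2p_i}]=[\zeta\otimes\OO_{2p_i}]$, the function $\si\mapsto Z(\OO_{2p_i})$ is holomorphic and nowhere zero on $\SS_i$, while the phase of $\OO_{2p_i}$ (continuous on the closed set $W^{+}$) and that of $\zeta\otimes\OO_{2p_i}$ (continuous on $W^{-}$) are branches of $\tfrac1\pi\arg Z(\OO_{2p_i})$ agreeing on $W^{+}\cap W^{-}$ by~(a); since $W^{+}\cup W^{-}=\SS_i$ they glue to a continuous, hence holomorphic, function $f_i$ with $\exp(\pi f_i)=Z(\OO_{2p_i})$ and the asserted imaginary part. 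It is a submersion because $Z\mapsto Z(\OO_{2p_i})$ already is on $\C^{2}$ and $\SS_i\to\C^{2}$ is a local homeomorphism by \cite{Bridgeland06}. The subgroup $\R\times\R^{*}_{>0}\subset\gltwoplus$ acts freely and shifts $f_i$ by an additive constant ($R_a$ by $ia$, rescaling by $c$ by $\tfrac1\pi\log c$); the usual argument that a holomorphic $\C$-bundle with a transverse $\C$-action is trivial then yields $\C\times\Sigma\xrightarrow{\ \sim\ }\SS_i$ with $\Sigma=f_i^{-1}(0)$ and $f_i$ the first projection. For part (c): on $U^{+}$ the object $\OO_{p_i}$ is stable, so $Z(\OO_{p_i})\ne 0$ and $\tfrac1\pi\log|Z(\OO_{p_i})|+i\,\phi(\OO_{p_i})$ is a single-valued branch of $\tfrac1\pi\log Z(\OO_{p_i})$; likewise on $U^{-}$. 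Restricting to $\Sigma$, where $Z(S_{+})+Z(S_{-})=Z(\OO_{2p_i})=1$, the locus $\Sigma\cap U^{+}$ is parametrised by $w=Z(S_{+})$ subject to exactly the phase constraints from~(a); working these out shows $\tfrac1\pi\log Z(\OO_{p_i})$ identifies $\Sigma\cap U^{+}$ with $\C\setminus\R_{\ge 0}$ and sends $U^{+}\cap U^{-}$ to $\{|\Im z|<1\}$, and symmetrically for $U^{-}$.

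Finally, for part (d): by~(c), $\Sigma=A\cup B$ with $A=\Sigma\cap U^{+}\cong\C\setminus\R_{\ge 0}\cong B=\Sigma\cap U^{-}$, glued along $A\cap B\cong\{|\Im z|<1\}\setminus\R_{\ge 0}$, which is connected (and simply connected); van Kampen gives that $\Sigma$ is simply connected, and $Z(S_{-})=1-Z(S_{+})$ yields the transition function $z\mapsto\tfrac1\pi\log(1-e^{\pi z})$. Thus $\Sigma$ is a planar simply connected Riemann surface carrying the holomorphic function $g:=Z(\OO_{p_i})$, a surjective local biholomorphism onto $\C$ which is not a covering map. To identify $(\Sigma,g)$ with $(\C,\psi)$, where $\psi(z)=\tfrac12+\tfrac1{\sqrt\pi}\int_{0}^{z}e^{-t^{2}}\,dt$ (entire, with $\psi'=\tfrac1{\sqrt\pi}e^{-z^{2}}$ nowhere zero, $\psi(+\infty)=1$, $\psi(-\infty)=0$, surjective), I would construct a holomorphic $F\colon\Sigma\to\C$ with $g=\psi\circ F$ by matching, chart by chart, the two slit-plane charts of $\Sigma$ with the regions into which the locus $\psi^{-1}(\R)$ (where $\operatorname{erf}$ is real) cuts the plane, and then verify $F$ is a biholomorphism; in particular this gives $\Sigma\cong\C$, so $\Sigma$ is of parabolic type. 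This last matching is the main obstacle: because $\psi$ is not a covering map one cannot simply invoke the lifting criterion, so the construction of $F$ has to use the explicit two-chart description of $\Sigma$ together with a precise analysis of where the error function takes real values, the delicate points being the global single-valuedness of $F$ and its bijectivity — equivalently, that the local branches of $\psi^{-1}\circ g$ continue consistently over all of $\Sigma$.
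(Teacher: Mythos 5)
Parts (a)--(c) follow the paper's proof essentially step for step: the case analysis on which of the four endosimple objects is (semi)stable, the gluing of the two local branches of $\tfrac1\pi\arg Z(\OO_{2p_i})$ along $W^+\cap W^-$, and the trivialisation of the $\R\times\R^*_{>0}$-action. These are fine.

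Part (d) is where your route diverges from the paper, and where your argument has a genuine gap that you yourself flag. You establish, correctly, that $\Sigma$ is a simply connected Riemann surface covered by two charts $\Sigma\cap U^\pm\cong\C\setminus\R_{\ge 0}$ on which $g:=Z(\OO_{p_i})$ equals $e^{\pi z}$ and $1-e^{\pi z}$ respectively, but you then propose to produce a biholomorphism $F:\Sigma\to\C$ with $g=\psi\circ F$ by hand-matching slit-plane charts against the level sets of $\psi=\tfrac12+\tfrac1{\sqrt\pi}\int_0^z e^{-t^2}dt$. As you note, because $g$ and $\psi$ are not covering maps the lifting criterion does not apply, and the global single-valuedness and bijectivity of $F$ do not follow from the chart-by-chart data alone; you would in effect have to redo the uniformisation of $\Sigma$ from scratch. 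The paper sidesteps this entirely: the two-chart description shows that $g$ exhibits $\Sigma$ as a simply connected surface spread over $\P^1$ with exactly four logarithmic branch points --- one over $0$ (from $\Re z\to-\infty$ in $\Sigma\cap U^+$), one over $1$ (from $\Re z\to-\infty$ in $\Sigma\cap U^-$), and two over $\infty$ (the upper and lower ends of the strip $\Sigma\cap U^+\cap U^-$) --- and then invokes Nevanlinna's classification of simply connected Riemann surfaces with finitely many logarithmic \emph{Windungspunkte} (\cite[sec.~45]{Nevanlinna}). That theorem says such a surface is of parabolic type, so $\Sigma\cong\C$, and identifies the uniformising function; the error-function formula is precisely the classical explicit model of the surface with this branching datum. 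So your reduction to the two-chart picture is the right preparation, but to close the argument you should recognise $\Sigma$ by its logarithmic branch data and cite Nevanlinna rather than attempt a direct construction of $F$.

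One further small caution about part (b): you assert that $\SS_i\to\Hom(K_0(\DD_{p_i}),\C)\cong\C^2$ is a local homeomorphism ``by \cite{Bridgeland06}''. Bridgeland's Theorem 1.2 gives a local homeomorphism onto the subspace $W_\sigma$, and one must still check that $W_\sigma$ is all of $\C^2$ here, i.e.\ that every stability in $\SS_i$ is full. This is true (it follows, for instance, from the estimates in the case analysis of (a), since the four endosimple objects generate and have $Z$-values bounded away from $0$ relative to $\|\cdot\|$), but it deserves a sentence rather than being taken for granted.
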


\begin{proof} (a) Recall that by Lemma \ref{simple-obj-lem}, the only endosimple objects in $\DD_{p_i}$
are $\OO_{p_i}$, $\zeta\ot\OO_{p_i}$, $\OO_{2p_i}$ and $\zeta\ot\OO_{2p_i}$. Given a stability
condition $\sigma=(Z,P)$, each subcategory $P(t)$ is generated by stable (hence, endosimple)
objects. In particular, stable objects generate $\DD_{p_i}$. This implies that we should have
at least two stable objects, and that one of the objects $\OO_{p_i}$ and $\zeta\ot\OO_{p_i}$ is
always stable (since the objects $\OO_{2p_i}$ and $\zeta\ot\OO_{2p_i}$ do not generate
$\DD_{p_i}$). Thus, we have $\SS=U^+\cup U^-$. 

Next, let us check
that either $\OO_{2p_i}$ or $\zeta\ot\OO_{2p_i}$ is always semistable, i.e.,
$\SS=W^+\cup W^-$.
If either $\OO_{p_i}$ or $\zeta\ot\OO_{p_i}$ is not stable then one of the objects
$\OO_{2p_i}$ and $\zeta\ot\OO_{2p_i}$ has to be stable 
(since there should be at least two stable objects).
Now assume that both $\OO_{p_i}$ and $\zeta\ot\OO_{p_i}$ are stable. 
If $\phi(\OO_{p_i})=\phi(\zeta\ot\OO_{p_i})$ then $\OO_{2p_i}$ and $\zeta\ot\OO_{2p_i}$ are both
semistable of the same phase. If $\phi(\OO_{p_i})>\phi(\zeta\ot\OO_{p_i})$ then
$\OO_{2p_i}$ is stable and $\zeta\ot\OO_{2p_i}$ is unstable (=not semistable). Similarly, 
if $\phi(\OO_{p_i})<\phi(\zeta\ot\OO_{p_i})$
then $\zeta\ot\OO_{2p_i}$ is stable and $\OO_{2p_i}$ is unstable. This proves that $\SS=W^+\cup W^-$.
Note in addition that $\OO_{2p_i}$ and $\zeta\ot\OO_{2p_i}$ cannot be both stable since
we have nonzero maps $\OO_{2p_i}\to \zeta\ot\OO_{2p_i}$ and $\zeta\ot\OO_{2p_i}\to\OO_{2p_i}$.

Let us classify stabilities such that $\OO_{p_i}$ is stable. The following $3$ cases
(not mutually exclusive) can occur: (i) $\OO_{2p_i}$ is stable; (ii) $\zeta\ot\OO_{2p_i}$ is stable; (iii) 
$\zeta\ot\OO_{p_i}$ is stable.

In case (i) we have $\phi(\OO_{p_i})>\phi(\OO_{2p_i})$ (since there is a nonzero map
$\OO_{2p_i}\to \OO_{p_i}$). The exact triangle
$$\OO_{p_i}[-1]\to\zeta\ot\OO_{p_i}\to\OO_{2p_i}\to\OO_{p_i}$$
shows that if $\phi(\OO_{p_i})\ge\phi(\OO_{2p_i})+1$ then $\zeta\ot\OO_{p_i}$ is not stable.
On the other hand, if $\phi(\OO_{p_i})<\phi(\OO_{2p_i})+1$ then one can easily check that 
$\zeta\ot\OO_{p_i}$ is stable. A stability condition in this case is uniquely determined by the phases and
central charges of $\OO_{p_i}$ and $\OO_{2p_i}$ that can be arbitrary such that
$\phi(\OO_{p_i})>\phi(\OO_{2p_i})$.

In case (ii) we have $\phi(\OO_{p_i})<\phi(\zeta\ot\OO_{2p_i})$ (because of the nonzero map
$\OO_{p_i}\to\zeta\ot\OO_{2p_i}$). The exact triangle
$$\zeta\ot\OO_{2p_i}\to \zeta\ot\OO_{p_i}\to \OO_{p_i}[1]\to\ldots$$
shows that if $\phi(\OO_{p_i})\le \phi(\zeta\ot\OO_{2p_i})-1$ then $\zeta\ot\OO_{p_i}$ is not
stable. One can also check that for $\phi(\OO_{p_i})>\phi(\zeta\ot\OO_{2p_i})-1$,
the object $\zeta\ot\OO_{p_i}$ is stable. A stability condition in case (ii) is uniquely determined by the
phases and central charges of $\OO_{p_i}$ and $\zeta\ot\OO_{2p_i}$ subject to the condition
$\phi(\OO_{p_i})<\phi(\zeta\ot\OO_{2p_i})$.

In case (iii) we have 
\begin{equation}\label{phases-in-1}
|\phi(\OO_{p_i})-\phi(\zeta\ot\OO_{p_i})|<1
\end{equation} 
(because of nonzero maps
$\OO_{p_i}\to\zeta\ot\OO_{p_i}[1]$ and $\zeta\ot\OO_{p_i}\to\OO_{p_i}[1]$).
One can easily check that if $\phi(\OO_{p_i})>\phi(\zeta\ot\OO_{p_i})$
(resp., $\phi(\OO_{p_i})<\phi(\zeta\ot\OO_{p_i})$) then $\OO_{2p_i}$ is stable and
$\zeta\ot\OO_{2p_i}$ is unstable (resp., $\zeta\ot\OO_{2p_i}$ is stable and
$\OO_{2p_i}$ is unstable). On the other hand, if $\phi(\OO_{p_i})=\phi(\zeta\ot\OO_{p_i})$
then both $\OO_{2p_i}$ and $\zeta\ot\OO_{2p_i}$ are semistable of the same phase.
A stability condition in case (iii) is uniquely determined by the phases and central charges of
$\OO_{p_i}$ and $\zeta\ot\OO_{p_i}$ subject to \eqref{phases-in-1}.

The above classification (complemented by a similar classification in the case where
$\zeta\ot\OO_{p_i}$ is stable) implies the required characterizations of $W^+\cap W^-$,
$U^+\cap U^-\cap W^+$ and $U^+\cap U^-\cap W^-$.

Note that the subsets $W^+$ and $W^-$ are closed by general properties of stability conditions.
It remains to check that $U^+$ and $U^-$ are open. We'll do this only for $U^+$ (the other case
will follow by applying the autoequivalence $\ot\zeta$).
Assume first that $\sigma=(Z,P)\in U^+\cap U^-$. Then there exists an interval $(t,t+\eta)$ with
$0<\eta<1$ such that all the objects $\OO_{p_i}$, $\zeta\ot\OO_{p_i}$, $\OO_{2p_i}$ and 
$\zeta\ot\OO_{2p_i}$ are in $P(t,t+\eta)$.
Hence, if $\sigma'=(Z',P')$ is sufficiently close to $\sigma$ then these four objects are still in
$P'(t',t'+\eta')$ for some $0<\eta'<1$. It follows from the above classification that 
in this case $\sigma'\in U^+\cap U^-$. Next, assume that 
$\sigma=(Z,P)\in U^+$ is such that $\OO_{2p_i}$
is stable and $\zeta\ot\OO_{p_i}$ is not stable. Then we have $\zeta\ot\OO_{p_i}\in P[\phi_0,+\infty)$,
where $\phi_0=\phi(\OO_{2p_i})$, and also $\zeta\ot\OO_{2p_i}$ is unstable. 
Hence, if $\sigma'=(Z,P')$ is sufficiently close to $\sigma$
then $\zeta\ot\OO_{p_i}\in P'(>\phi_0-1/3)$, $\OO_{2p_i}\in P'(\phi_0-1/3,\phi_0+1/3)$, and 
$\zeta\ot\OO_{2p_i}$ is $\sigma'$-unstable. Suppose that $\OO_{p_i}$ is not $\sigma'$-stable.
Then $\zeta\ot\OO_{p_i}$ and $\OO_{2p_i}$ have to be stable. But the above inclusions
show that the difference of phases of $\zeta\ot\OO_{p_i}$ and $\OO_{2p_i}$ is $<1$. Therefore,
$\OO_{p_i}$ is also $\sigma'$-stable by the above classification.
Finally, assume $\si\in U^+$ is such that $\zeta\ot\OO_{2p_i}$ is stable and $\zeta\ot\OO_{p_i}$
is not stable. Then setting $\phi_0=\phi(\zeta\ot\OO_{2p_i})$ we get 
$\zeta\ot\OO_{p_i}\in P(-\infty,\phi_0]$. The same argument as in the previous case shows that
this implies that $\OO_{p_i}$ is $\sigma'$-stable for $\sigma'$ close to $\sigma$.

\medskip

\noindent
(b) The fact that $f_i$ is well-defined and continuous follows from the fact that the phases
of $\OO_{2p_i}$ and $\zeta\ot\OO_{2p_i}$ agree on $W^+\cap W^-$. Since $\exp(\pi f_i)$ is
holomorphic by the definition of a complex structure on the stability space, it follows that
$f_i$ is holomorphic. 
Now let us consider 
the subgroup $\R\times\R^*_{>0}\sub\gltwoplus$ acting on the
stability space, where $(a,\la)\in\R\times\R^*_{>0}$ 
acts by the phase rotation $R_a$ combined with the rescaling of the central charge by $\la$.
Note that this action is compatible with the holomorphic action of this group on the central charges,
where we identify $\R\times\R^*_{>0}$ with $\C$ via $(a,\la)\mapsto \frac{\log(\la)}{\pi}+ia$.
Under this identification we have
$$f_i(z\cdot \sigma)=f_i(\sigma)+z.$$ 
This gives the required splitting $\C\times \Sigma\widetilde{\to}\SS_i$.

\medskip

\noindent
(c) The identifications of $\Sigma\cap U^+$, $\Sigma\cap U^-$ and $\Sigma\cap U^+\cap U^-$
follow easily from the proof of (a). Note that 
it is convenient to consider separately three regions in $\Sigma$ 
depending on whether $\si\in W^+\setminus W^-$, $\si\in W^-\setminus W^+$, or
$\si\in W^+\cap W^-$. In the latter case we have $\phi(\OO_{p_i})=\phi(\zeta\ot\OO_{p_i})$.
In the first case if in addition $\si\in U^+$ (resp., $\si\in U^-$) then $\phi(\OO_{p_i})>0$
(resp., $\phi(\zeta\ot\OO_{p_i})<0$), etc.

\medskip

\noindent
(d) As we have seen in (c), the function $Z(\OO_{p_i})$ restricts to
$\exp(\pi z)$ on $\Sigma\cap U^+\simeq \C\setminus\R_{\ge 0}$, hence, it has a logarthmic 
ramification above $0$ and $\infty$. On the other hand, since $Z(\OO_{p_i})=1-Z(\zeta\ot\OO_{p_i})$,
we see that the restriction of this function to $\Sigma\cap U^-$ has a logarithmic ramification above
$1$ and $\infty$. Now we can easily identify $\Sigma$ with the simply connected Riemann surface that
has $4$ logarithmic ramification points, two over $\infty$,
one over $0$ and one over $1$. Our result follows easily
from the Nevanlinna's classification of such surfaces (see \cite[sec. 45]{Nevanlinna}).
\end{proof}

\begin{cor}\label{fun-cor} 
The function $\delta_i:\SS_i\to\R$ given by
$$\delta_i(\sigma)=\begin{cases} \det(Z(\zeta\otimes\OO_{p_i}),Z(\OO_{2p_i})), & 
\zeta\otimes\OO_{2p_i} \text{ is } \sigma-\text{semistable},\\
0, & \OO_{2p_i} \text{ is } \sigma-\text{semistable} \end{cases}
$$
is continuous.
\end{cor}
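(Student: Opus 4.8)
The plan is to regard $\SS_i = W^{+} \cup W^{-}$ (Proposition \ref{point-prop}(a)) as a union of two \emph{closed} subsets, on each of which $\delta_i$ is given by a visibly continuous formula, and then to apply the pasting lemma. By definition of $W^{-}$ and $W^{+}$, on $W^{-}$ the object $\zeta\ot\OO_{2p_i}$ is $\si$-semistable and $\delta_i(\si) = \det(Z(\zeta\ot\OO_{p_i}), Z(\OO_{2p_i}))$, while on $W^{+}$ the object $\OO_{2p_i}$ is $\si$-semistable and $\delta_i(\si) = 0$. Since $W^{+}$ and $W^{-}$ are closed and cover $\SS_i$, it suffices to check (i) that these two prescriptions agree on the overlap $W^{+}\cap W^{-}$, and (ii) that each is continuous on its domain.

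Step (i) is the only point with any content. Let $\si = (Z,P) \in W^{+}\cap W^{-}$. By Proposition \ref{point-prop}(a) the objects $\OO_{p_i}$ and $\zeta\ot\OO_{p_i}$ are then both $\si$-stable of one common phase $\phi$, so their central charges $Z(\OO_{p_i})$ and $Z(\zeta\ot\OO_{p_i})$ are positive real multiples of $e^{i\pi\phi}$. Since $\OO_{2p_i}$ is an extension of $\OO_{p_i}$ and $\zeta\ot\OO_{p_i}$ (so that $[\OO_{2p_i}] = [\OO_{p_i}] + [\zeta\ot\OO_{p_i}]$ in $K_0(\DD_{p_i})$), the central charge $Z(\OO_{2p_i})$ is again a positive real multiple of $e^{i\pi\phi}$; hence $Z(\zeta\ot\OO_{p_i})$ and $Z(\OO_{2p_i})$ are $\R_{>0}$-collinear and $\det(Z(\zeta\ot\OO_{p_i}), Z(\OO_{2p_i})) = 0$. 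This is exactly the value $\delta_i(\si) = 0$ produced by the $W^{+}$-branch, so $\delta_i$ is well defined on $W^{+}\cup W^{-} = \SS_i$.

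For step (ii), recall that the assignment $\si = (Z,P) \mapsto Z \in \Hom(K_0(\DD_{p_i}),\C)$ is continuous on $\Stab(\DD_{p_i})$ --- near any point the stability space is locally homeomorphic to a linear subspace of central charges (see Section \ref{reason-sec}). Therefore $\si \mapsto \bigl(Z(\zeta\ot\OO_{p_i}), Z(\OO_{2p_i})\bigr) \in \C^2$ is continuous, and post-composing with the determinant map $\C\times\C \to \R$, which is $\R$-bilinear and hence continuous, shows that $\delta_i$ is continuous on $W^{-}$; on $W^{+}$ it is constant, so continuous there as well. By the pasting lemma applied to the closed cover $\SS_i = W^{+}\cup W^{-}$, the function $\delta_i$ is continuous on $\SS_i$. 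I expect step (i) --- the agreement of the two branches on the overlap --- to be the only place requiring an argument; everything else is formal.
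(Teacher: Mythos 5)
Your proof is correct. Since the paper provides no explicit argument for this Corollary, one infers it from Proposition \ref{point-prop}(a), and your approach is exactly the natural one: decompose $\SS_i = W^+ \cup W^-$ as a closed cover, observe the two branches agree on $W^+\cap W^-$ because there $\OO_{p_i}$ and $\zeta\ot\OO_{p_i}$ are stable of the same phase (so $Z(\zeta\ot\OO_{p_i})$ and $Z(\OO_{2p_i}) = Z(\OO_{p_i}) + Z(\zeta\ot\OO_{p_i})$ lie on one ray and the determinant vanishes), use continuity of $\si \mapsto Z$ and of the bilinear determinant, and paste. This is the intended argument; no gap.
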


Let us consider the submanifold $\Theta$ of 
$\SS_1\times\ldots\times\SS_n\times\C$
consisting of $(\sigma_1,\ldots,\sigma_n,z)$ such that $f_1(\sigma_1)=\ldots=f_n(\sigma_n)$. 
Note that by Proposition \ref{point-prop}(b), we have
$$\Theta\simeq\C\times\Sigma^n\times\C,$$
where the first factor corresponds to $f_i(\sigma_i)$.

\begin{thm}\label{cover-class-thm} 
Assume that $g(Y)\geq 1$. Then natural map
$$\rho:\Stab_{\NN}(\DD_{\Z_2}(X))\to \SS_1\times\ldots\times\SS_n\times\C:
\sigma\mapsto (\sigma|_{\DD_{p_1}},\ldots,\sigma|_{\DD_{p_n}},Z(\OO_X))$$
induces an isomorphism of $\Stab_{\NN}(\DD_{\Z_2}(X))$ with the open subset 
$\Theta^0\sub\Theta$ consisting of $(\sigma_1,\ldots,\sigma_n,z)$ such that
\begin{equation}\label{det-ineq}
\det(z,\exp(\pi f_1(\sigma_1)))+\sum_{i=1}^n\delta_i(\sigma_i)>0.
\end{equation}
The space $\Stab_{\NN}(\DD_{\Z_2}(X))$ is contractible.
\end{thm}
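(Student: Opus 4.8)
The plan is to show that $\rho$ is a biholomorphism onto $\Theta^{0}$, and then deduce contractibility from the explicit description of $\Theta^{0}$. First, that $\rho$ lands in $\Theta$: the restriction $\sigma|_{\DD_{p_{i}}}$ exists by Lemma \ref{gen1-lem}(2) and defines a point of $\SS_{i}$; since $\OO_{2p_{i}}$ is the pull-back of a skyscraper on $Y$, it has the same class as $\OO_{\pi^{-1}(y)}$ in $\NN(\DD_{\Z_{2}}(X))$, so $Z(\OO_{2p_{i}})=v_{Z}$ for all $i$. To see that the branches $f_{i}(\sigma|_{\DD_{p_{i}}})$ coincide, use Lemma \ref{gen1-lem}(1),(4): there is a $\sigma$-semistable equivariant line bundle $L_{0}$, necessarily with $\phi(L_{0})\in(\phi_{\sigma}-1,\phi_{\sigma})$, $\phi_{\sigma}:=\phi(\OO_{\pi^{-1}(y)})$ (local finiteness forbids the endpoints), and the nonzero maps $L_{0}\to\OO_{2p_{i}}$ and $\OO_{2p_{i}}\to L_{0}[1]$ together with $Z(\OO_{2p_{i}})=v_{Z}$ pin the phase of whichever of $\OO_{2p_{i}},\zeta\otimes\OO_{2p_{i}}$ is $\sigma$-semistable to $\phi_{\sigma}$; hence all $f_{i}$ agree along $\rho(\sigma)$, i.e. $\rho(\sigma)\in\Theta$. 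Holomorphy of $\rho$ and the fact that it is a local isomorphism will follow, once every such $\sigma$ is known to be full, from the compatibility of $\rho$ with the assignment of central charges $Z\mapsto\big((Z|_{\NN(\DD_{p_{i}})})_{i},Z(\OO_{X})\big)$: in the coordinates $Z(\OO_{X})$, $\tfrac1\pi\log v_{Z}$, $\tfrac1\pi\log Z(\OO_{p_{i}})$ on $\Theta$ this is a local biholomorphism onto the central charges occurring along $\Theta$ (all relevant central charges being nonzero), and both $\Stab_{\NN}(\DD_{\Z_{2}}(X))$ and $\Theta$ are locally modelled on their central charges.

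Next I would show the image lies in $\Theta^{0}$ and deduce fullness. Put $I^{-}=\{i:\zeta\otimes\OO_{2p_{i}}\text{ is }\sigma\text{-semistable}\}$, so that $\delta_{i}(\sigma|_{\DD_{p_{i}}})=0$ for $i\notin I^{-}$ by Proposition \ref{point-prop}(a), and $L=\OO_{X}(\sum_{i\in I^{-}}p_{i})$, with $[L]=[\OO_{X}]+\sum_{i\in I^{-}}[\zeta\otimes\OO_{p_{i}}]$ in $\NN$. Analysing the Harder--Narasimhan filtration of a suitable twist of $\OO_{X}$ by means of Lemma \ref{gen1-lem}(3),(4) shows $\det(Z(L),v_{Z})>0$; expanding $Z(L)$ and using $\det(Z(\zeta\otimes\OO_{p_{i}}),v_{Z})=\det(Z(\zeta\otimes\OO_{p_{i}}),Z(\OO_{2p_{i}}))=\delta_{i}$ for $i\in I^{-}$ rewrites this as precisely the inequality \eqref{det-ineq}. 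Consequently, after a rotation making $v_{Z}\in\R_{<0}$ and a twist by an equivariant line bundle making $\Im Z(\OO_{p_{i}})\le 0$ for all $i$, the inequality \eqref{det-ineq} becomes exactly $\Im Z(\OO_{X})>0$; so $Z$ satisfies the hypotheses of Lemma \ref{num-lem}(1), whence $\sigma$ is full. This completes the previous paragraph: $\rho$ is a holomorphic local isomorphism, and in particular its image is open in $\Theta^{0}$.

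It remains to prove $\rho$ is a bijection onto $\Theta^{0}$. For surjectivity, take a point of $\Theta^{0}$, normalise $v_{Z}\in\R_{<0}$ by the $\C$-action, read off from the classification in Proposition \ref{point-prop}(a) a partition $\{1,\dots,n\}=I^{0}\sqcup I^{+}\sqcup I^{-}$ and integers $n_{i}\ge 1$, and twist by $\OO_{X}(\sum_{i\in I^{-}}p_{i})$; then the hypotheses of Proposition \ref{cover-prop} hold, the only nontrivial one ($\Im Z(\OO_{X})>0$ after the twist) being precisely \eqref{det-ineq}, so the proposition produces a stability condition which, untwisted and rotated back, lies in $\Stab_{\NN}(\DD_{\Z_{2}}(X))$. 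By Proposition \ref{char-prop}(2) it is glued along \eqref{semiorth-dec3}, and unwinding the phases of $\OO_{p_{i}},\zeta\otimes\OO_{p_{i}},\OO_{2p_{i}}$ in its heart together with the uniqueness clauses of Proposition \ref{point-prop}(a) identifies its restrictions to the $\DD_{p_{i}}$ and its value on $\OO_{X}$ with the prescribed data. For injectivity, $\rho(\sigma)=\rho(\tau)$ forces $\sigma$ and $\tau$ to have the same central charge, and after the rotation and twist above we are in a normalisation where $\OO_{\pi^{-1}(y)}$ is stable (automatic by Lemma \ref{gen1-lem}(1)) and $\OO_{p_{i}}\in P[1,2)$, $\zeta\otimes\OO_{p_{i}}\in P(0,1]$, so Lemma \ref{phase-lem}(b) applies and shows $\sigma$ and $\tau$ have the same heart; hence $\sigma=\tau$. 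Together with the previous step, $\rho$ is a holomorphic bijection with holomorphic local inverse, i.e. an isomorphism $\Stab_{\NN}(\DD_{\Z_{2}}(X))\xrightarrow{\ \sim\ }\Theta^{0}$.

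Finally, under the identification $\Theta\cong\C\times\Sigma^{n}\times\C$ of Proposition \ref{point-prop}(b) (first factor the common value $f$ of the $f_{i}$), $\Theta^{0}$ is the locus where $\det(z,\exp(\pi f))+\sum_{i}\delta_{i}(\sigma_{i})>0$; for fixed $(f,\sigma_{1},\dots,\sigma_{n})$ this describes a nonempty open convex half-plane in the variable $z$, and $\delta_{i}$ is continuous by Corollary \ref{fun-cor}, so the projection $\Theta^{0}\to\C\times\Sigma^{n}$ forgetting $z$ is a fibre bundle with convex fibres. Hence $\Theta^{0}$ deformation retracts onto a section homeomorphic to $\C\times\Sigma^{n}$, which is $\C^{n+1}$ since $\Sigma\cong\C$ by Proposition \ref{point-prop}(d); therefore $\Stab_{\NN}(\DD_{\Z_{2}}(X))$ is contractible. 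I expect the main obstacle to be the surjectivity step — matching an arbitrary point of the full two-dimensional space $\SS_{i}$ with an output of the gluing construction of Proposition \ref{cover-prop} and verifying that its restriction to $\DD_{p_{i}}$ reproduces the input — together with the bookkeeping in the second step that identifies the positivity of the semistable line bundle with the defining inequality \eqref{det-ineq} of $\Theta^{0}$.
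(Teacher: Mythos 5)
Your proof follows the same broad strategy as the paper (identify the image of $\rho$ via the classification Lemma \ref{phase-lem}, check the inequality via a normalization argument, deduce fullness, and then observe $\Theta^0$ is contractible as a bundle of half-planes), and a few of your local choices are genuinely nice alternatives: deducing fullness directly from Lemma \ref{num-lem}(1) after a rotation and twist (rather than the paper's route via connectedness and the observation that $W_\sigma$ is constant on components), and spelling out why the $f_i$ agree using a semistable line bundle, which the paper leaves implicit. The rewriting of \eqref{det-ineq} as $\det(Z(L),v_Z)>0$ with $L=\OO_X(\sum_{i\in I^-}p_i)$ is the same computation the paper does in Step 1.

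However, the injectivity step has a genuine gap. You claim that after rotating so $v_Z\in\R_{<0}$ and twisting by $\OO_X(\sum_{i\in I^-}p_i)$ one is ``in a normalisation where $\OO_{p_i}\in P[1,2)$, $\zeta\otimes\OO_{p_i}\in P(0,1]$,'' and then apply Lemma \ref{phase-lem}(b). This normalization cannot be achieved in general. After the twist all $\OO_{2p_i}$ are semistable (i.e.\ each $\sigma|_{\DD_{p_i}}\in W^+$), and by Lemma \ref{O-lem}(1) they have phase $1$; but Proposition \ref{point-prop}(a) shows there are two further possibilities for $\sigma|_{\DD_{p_i}}$: it can lie in $(W^+\cap U^+)\setminus U^-$, where $\phi(\OO_{p_i})\ge \phi(\OO_{2p_i})+1=2$ (so $\OO_{p_i}\notin P[1,2)$, and $\zeta\otimes\OO_{p_i}$ is not even semistable), or in $(W^+\cap U^-)\setminus U^+$, where $\phi(\zeta\otimes\OO_{p_i})\le 0$ (so $\zeta\otimes\OO_{p_i}\notin P(0,1]$, and $\OO_{p_i}$ is not semistable). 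These are exactly the stabilities from Proposition \ref{cover-prop} in which some $n_i\ge 2$, and no single twist by an equivariant line bundle removes them: twisting by $\OO(p_i)$ simply exchanges $\OO_{p_i}$ with $\zeta\otimes\OO_{p_i}$, which does not bring a phase $\ge 2$ (or $\le 0$) into the admissible window. The paper avoids this by invoking Lemma \ref{phase-lem}(a) rather than (b): part (a) allows the three cases ``both of phase $1$,'' ``$\OO_{p_i}$ semistable of phase $>1$,'' ``$\zeta\otimes\OO_{p_i}$ semistable of phase $\le 0$,'' with arbitrary $n_i$, and concludes that the stability is uniquely determined by $Z$ together with the phases of $\OO_{p_i}$ and $\zeta\otimes\OO_{p_i}$. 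That data is exactly what is recorded by $\rho(\sigma)$, so injectivity follows. You should replace the appeal to Lemma \ref{phase-lem}(b) in both the injectivity and surjectivity steps by Lemma \ref{phase-lem}(a) and Proposition \ref{cover-prop} with general $n_i$, and drop the twist by $\OO_X(\sum_{i\in I^-}p_i)$ (only the normalization making all $\OO_{2p_i}$ semistable of phase $1$ is needed). A minor further point: when you apply Lemma \ref{num-lem}(1) you should also note why $Z(\OO_{p_i})\neq 0$ after the normalization (this follows because either $\OO_{p_i}$ or $\zeta\otimes\OO_{p_i}$ is stable and $Z(\OO_{2p_i})\neq 0$, so both classes get nonzero values; the paper packages this as the condition $Z\in\ov{U}$).
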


\begin{proof} Note that the map $\rho$ is well defined by Lemma \ref{gen1-lem}(2).
It is continuous and is compatible with 
the similar restriction map on the central charges and with the $\gltwoplus$-actions. 

\noindent
{\bf Step 1}. Let us check that the image of $\rho$ is contained in $\Theta^0$. 
The fact that it is contained in $\Theta$ follows immediately from the definitions, so
it remains to check that \eqref{det-ineq} holds whenever $\sigma_1,\ldots,\sigma_n$ 
are the restrictions of some $\sigma\in\Stab_{\NN}(\DD_{\Z_2}(X))$
to $\DD_{p_1},\ldots,\DD_{p_n}$. 
Recall that by Proposition \ref{point-prop}, for every $i\in\{1,\ldots,n\}$ either
$\OO_{2p_i}$ or $\zeta\otimes\OO_{2p_i}$ is $\sigma$-semistable.
Thus, by Lemmas \ref{gen1-lem}(1) and \ref{O-lem}(1),
rotating $\sigma$ and tensoring it with an appropriate line bundle, we can get a stability
with respect to which all
objects $\OO_{\pi^{-1}(y)}$ for $y\in Y\setminus R$ are stable of phase $1$, and all
objects $\OO_{2p_i}$ are semistable of phase $1$. 
Note that for such a stability inequality 
\eqref{det-ineq} is satisfied, as $\delta_i(\sigma_i)=0$ for all $i$ and
the first term in \eqref{det-ineq} is equal to $\Im Z(\OO_X)$ (recall that $\OO_X\in P(0,1)$ by
Lemma \ref{O-lem}). It remains
to check that the left-hand side of \eqref{det-ineq} for $\rho(\sigma)$
does not change upon tensoring $\sigma$ with an equivariant line bundle
(the $\gltwoplus$-invariance is clear). It is enough to compare the left-hand sides of
\eqref{det-ineq} for $\sigma$ and $\sigma'=\sigma\otimes \OO(-p_i)$, assuming
that all $\OO_{\pi^{-1}(y)}$ for $y\in Y\setminus R$ 
have phase $1$ and $\OO_{2p_i}$ is $\sigma$-semistable. Indeed, the central charge for
$\sigma'$, is given by $Z'(E)=Z(E(p_i))$, so $z=Z(\OO_X)$ will get replaced
by 
$$Z'(\OO_X)=Z(\OO(p_i))=Z(\OO_X)+Z(\zeta\otimes\OO_{p_i}),$$ 
so the first term $\Im Z(\OO_X)$
in \eqref{det-ineq} gets replaced by its sum with $\Im Z(\zeta\otimes\OO_{p_i})$. On the
other hand, since $\zeta\otimes\OO_{2p_i}$ is $\sigma'$-semistable, the term
$\delta_i(\sigma|_{\DD_{p_i}})=0$ gets replaced by
$$\delta_i(\sigma'|_{\DD_{p_i}})=\Im Z'(\zeta\otimes\OO_{p_i})=\Im Z(\OO_{p_i})=-\Im Z(\zeta\otimes
\OO_{p_i}).$$

\noindent
{\bf Step 2}. Up to a rotation and tensoring with a line bundle, every stability condition 
$\sigma\in\Stab_{\NN}(\DD_{\Z_2}(X))$ is obtained from the construction of Proposition \ref{cover-prop}.
Indeed, applying a rotation and tensoring with a line bundle 
we can assume that $\OO_{\pi^{-1}(y)}$ for all $y\in Y$ are $\sigma$-semistable of phase $1$.
Now we have to check the remaining conditions of Lemma \ref{phase-lem}(a).
By Lemma \ref{O-lem} we know that $\pi^*L$ is in $P(0,1)$ for every $L\in\Pic(Y)$.
Next, by Proposition \ref{point-prop}(a), for every $i$ the restriction of $\sigma$ to
$\DD_i$ belongs either to $W^+\cap W^-$, or to $(W^+\cap U^+)\setminus W^-$, or to
$(W^+\cap U^-)\setminus U^+$.
In the first case both $\OO_{p_i}$ and $\zeta\otimes\OO_{p_i}$ are stable of phase $1$. 
In the second case $\OO_{p_i}$ is stable of phase $>1$. Finally, in the third case
$\zeta\otimes\OO_{p_i}$ is stable of phase $\leq 0$ (this follows from Proposition \ref{point-prop}(a)).

\noindent
{\bf Step 3}. $\rho$ gives a bijection from $\Stab_{\NN}(\DD_{\Z_2}(X))$ to $\Theta^0$.
First, suppose we have two stability conditions $\sigma=(Z,P)$ and $\sigma'=(Z',P')$ such that 
$\rho(\sigma)=\rho(\sigma')$. Then $Z=Z'$ and the induced stability condition on $\DD_{p_i}$
for $\sigma$ and $\sigma'$ are the same. This implies that for every $i$, $\OO_{2p_i}$ is
$\sigma$-semistable if and only if it is $\sigma'$-semistable (of the same phase).
Therefore, rotating and tensoring with a line bundle we can
assume that $\OO_{\pi^{-1}(y)}$ for all $y$ are semistable of phase $1$ with respect to
both $\sigma$ and $\sigma'$. As we have seen in Step 2 this implies that conditions of Lemma
\ref{phase-lem}(a) are satisfied for $\sigma$ and $\sigma'$, which gives $\sigma=\sigma'$. 
On the other hand, given a point $(\sigma_1,\ldots,\sigma_n,z)\in\Theta^0$, using
the $\gltwoplus$-action and operations on $\Theta^0$ corresponding to tensoring with a line
bundle on $X$, we can assume that $\Im f_i(\sigma_i)=1$ and $\OO_{2p_i}$ is 
$\sigma_i$-semistable for every $i$. We can define the central charge $Z$ uniquely
so that $Z|_{\DD_i}=Z_i$ and $Z(\OO_X)=z$. Note that inequality \eqref{det-ineq} 
in this case takes form $\Im z>0$. Now using Proposition \ref{cover-prop} we can easily construct
the stability condition $\sigma$ on $\DD_{\Z_2}(X)$ with the central charge $Z$ and
the given restrictions $\sigma_i$ on $\DD_i$. 

\noindent
{\bf Step 4}. 
$\Stab_{\NN}(\DD_{\Z_2}(X))$ is connected. This follows from the continuity of gluing and Step 2.
More precisely, let us first show that all stabilities constructed in Proposition \ref{cover-prop}
belong to the same connected component. To this end we consider them as being
glued from $(\zeta\otimes\OO_{p_i}, i\in I^-)$ and $D(I^+\cup I^0)$. Now using Corollary \ref{exc-cor}
we can find a path from our stability to the one that has
the phases of all $\zeta\otimes\OO_{p_i}$'s for $i\in I^-$ in the interval $(-1,0)$, and the phases of
all $\OO_{p_i}$'s for $i\in I^+$ in the interval $(1,2)$ (in particular, we will have $n_i=1$ for 
all $i\in I^-\cup I^+$). 
By definition, such a stability belongs to the connected set $U$ considered in 
Theorem \ref{cover-thm}. Thus, the set $V$ of all stabilities constructed in Proposition \ref{cover-prop}
is connected. Therefore, for every equivariant line bundle $L$ the set $V\otimes L$ is still connected.
Since the standard stability is contained in all of these sets, the statement follows from Step 2.

\noindent
{\bf Step 5}. It follows from Step 4 that every $\sigma\in\Stab_{\NN}(\DD_{\Z_2}(X))$ is full.
Therefore, the projection from $\Stab_{\NN}(\DD_{\Z_2}(X))$ to the space of numerical central
charges is a local homeomorphism. This implies that 
$\rho:\Stab_{\NN}(\DD_{\Z_2}(X))\to\Theta^0$ is a local homeomorphism. Therefore,
by Step 3, it is a homeomorphism.

\noindent
{\bf Step 6}. It remains to prove $\Theta^0$ is contractible. 
By Proposition \ref{point-prop}, the space $\Theta$ can be identified with the
product $\C\times \Sigma^n\times\C$,
where the first factor corresponds to $f_i(\sigma_i)$. Let us consider
the projection $\Theta^0\to \C\times \Sigma^n$
obtained by omitting the last component. Each fiber of this projection is a half-plane.
Since the target is contractible, it follows that
$\Theta^0$ is also contractible.
\end{proof}

\end{document}